\numberwithin{equation}{section}
\newtheorem{theorem}{Theorem}
\newtheorem*{remark}{Remark}
\newtheorem{lemma}{Lemma}
\newtheorem{corollary}{Corollary}
\newtheorem{definition}{Definition}
\newtheorem{prop}{Proposition}
\newtheorem{exercise}{Exercise}
\newcounter{cntr}
\def\const{\operatorname{const}}
\def\spann{\operatorname{span}}
\def\Span{\operatorname{span}}
\def\sgn{\operatorname{sgn}}
\def\Vec{\operatorname{Vec}}
\def\intt{\operatorname{int}}
\def\rank{\operatorname{rank}}
\def\Ker{\operatorname{Ker}}
\def\Im{\operatorname{Im}}
\def\Lie{\operatorname{Lie}}
\def\SO{\operatorname{SO}}
\def\SE{\operatorname{SE}}
\def\Id{\operatorname{Id}}
\def\cl{\operatorname{cl}}
\def\Lip{\operatorname{Lip}}
\def\Exp{\operatorname{Exp}}
\def\Max{\operatorname{Max}}
\def\conj{\operatorname{conj}}
\def\T{{\mathbb T}}
\def\R{{\mathbb R}}
\def\N{{\mathbb N}}
\title{Introduction to geometric control}
\author{Yuri Sachkov\\
Program Systems Institute\\
Pereslavl-Zalessky\\
Russia\\
{\tt yusachkov@gmail.com}}
\begin{document}

\maketitle

\begin{abstract}
Lecture notes of a short course on geometric control theory given in Brasov, Romania (August 2018) and in Jyv\"askyl\"a, Finland (February 2019).
\end{abstract}

\tableofcontents
\section{Introduction}
\subsection{Examples of optimal control problems}
We state several optimal control problems, many of which we study in the sequel.
\paragraph{Example 1: Stopping a train}
Consider a material point of mass $m>0$ with coordinate $x_1 \in \mathbb{R}$ that moves under the action of a force $F$ bounded by absolute value by $F_{max}>0$. Given an initial position $x_0$ and initial velocity $\dot{x}_0$ of the material point, we should find a force $F$ that steers the point to the origin with zero velocity, for a minimal time.

The second law of Newton gives $\lvert m \ddot{x}_1 \rvert = \lvert F \rvert \le F_{max}$, thus $\lvert \ddot{x}_1 \rvert \le \frac{F_{max}}{m}$. 
Choosing appropriate units of measure, we can obtain $\frac{F_{max}}{m} = 1$, thus  $\lvert \ddot{x}_1 \rvert \le 1$. Denote velocity of the point $x_2 = \dot{x}_1$, and acceleration $\dot{x}_2 = u$, $\lvert u \rvert \le 1$. Then the problem is formalized as follows:
\begin{align*}
&\dot{x}_1 = x_2, \quad x = (x_1, x_2) \in \mathbb{R}^2, \\
&\dot{x}_2 = u, \quad \lvert u \rvert \le 1, \\
&x(0) = (x_0, \dot{x}_0), \quad x(t_1) = (0, 0),\\
&t_1 \to \min.
\end{align*}
This is an example of a linear time-optimal problem.

\paragraph{Example 2: Control of linear oscillator}
Consider a pendulum that performs small oscilla\-tions under the action of a force bounded by absolute value. We should choose a force that steers the pendulum from an arbitrary position and velocity to the stable equilibrium for a minimum time. After choosing appropriate units of 
measure, we get a mathematical model: $\ddot{x}_1 = -x_1 + u$, $\lvert u \rvert \le 1$, $x_1 \in \mathbb{R}$. Introducing the notation $x_2 = \dot{x}_1$, we get a linear time-optimal problem:
\begin{align*}
&\dot{x}_1 = x_2, \quad x = (x_1, x_2) \in \mathbb{R}^2, \\
&\dot{x}_2 = -x_1 + u, \quad \lvert u \rvert \le 1, \\
&x(0) = x^0, \quad x(t_1) = 0, \\
&t_1 \to \min.
\end{align*}

\paragraph{Example 3: Markov-Dubins car}
Consider a simplified model of a car that is given by a unit vector attached at a point $(x, y) \in \mathbb{R}^2$, with orientation $\theta \in S^1$. The car moves forward with unit velocity and can simultaneously rotate with angular velocity $\lvert \dot{\theta} \rvert \le 1$. Given an initial and a terminal state of the car, we should choose the angular velocity in such a way that the time of motion is minimum possible.

We have the following nonlinear time-optimal problem:
\begin{align*}
&\dot{x} = \cos \theta, \quad q = (x, y, \theta) \in \mathbb{R}^2_{x,y} \times S^1_{\theta} = M, \\
&\dot{y} = \sin \theta, \quad \lvert u \rvert \le 1, \\
&\dot{\theta} = u, \\
&q(0) = q_0, \quad q(t_1) = q_1, \\ 
&t_1 \to \min.
\end{align*}
Notice that in this problem the state space $M = \mathbb{R}^2 \times S^1$ is a non-trivial smooth manifold, homeomorphic to the solid torus.

\paragraph{Example 4: Reeds-Shepp car}
Consider a model of a (more realistic) car in the plane that can move forward or backward with arbitrary linear velocity and simultaneously rotate with arbitrary angular velocity.
The state of the car is given by its position in the plane and orientation angle. We should find a motion of the car from a given initial state to a given terminal state, so that the length of the path in the space of positions and orientations was minimum possible.

We get the following optimal control problem:
\begin{align*}
&\dot{x} = u \cos \theta, \quad q = (x, y, \theta) \in \mathbb{R}^2_{x,y} \times S^1_{\theta}, \\
&\dot{y} = u \sin \theta, \quad (u, v) \in \mathbb{R}^2, \\
&\dot{\theta} = v, \\
&q(0) = q_0, \quad q(t_1) = q_1,\\
&l = \int^{t_1}_0 \sqrt{\dot{x}^2 + \dot{y}^2 + \dot{\theta}^2} dt = \int^{t_1}_0 \sqrt{u^2 + v^2} dt \to \min.
\end{align*}
This is an example of an optimal control problem with integral cost functional.
\paragraph{Example 5: Euler elasticae}
Consider a uniform elastic rod of length $l$ in the plane. Suppose that the rod has fixed endpoints and tangents at endpoints. We should find the profile of the rod.

Let $(x(t), y(t))$ be an arclength parameterization of the rod, and let $\theta (t)$ be its orientation angle in the plane. Then the rod satisfies the following conditions:
\begin{align*}
&\dot{x} = \cos \theta, \quad q = (x, y, \theta) \in \mathbb{R}^2 \times S^1, \\
&\dot{y} = \sin \theta, \quad u \in \mathbb{R}, \\
&\dot{\theta} = u, \\
&q(0) = q_0, \quad q(t_1) = q_1, \quad t_1 = l \textrm{ is the length of the rod}.
\end{align*}
Elastic energy of the rod is $J = \frac12 \int^{t_1}_0 k^2 dt$, while $k$ is the curvature of the rod. Since for an arclength parameterized rod $k = \dot{\theta} = u$, we obtain a cost functional 
$$J = \frac12 \int^{t_1}_0 u^2 dt \to \min,
$$ 
since the rod takes the form that minimizes its elastic energy.
\paragraph{Example 6: Sphere rolling on a plane without slipping or twisting}
Let a uniform sphere roll without slipping or twisting on a horizontal plane. One can imagine that the sphere rolls between two horizontal planes: fixed lower one and moving upper one. The state of the system is determined by the contact point of the sphere and the plane, and orientation of the sphere in the space. We should roll the sphere from a given initial state to a given terminal state, so that the length of the curve in the plane traced by the
contact point was the shortest possible.

Let $(x,y)$ denote coordinates of the contact point of the sphere with the plane. Introduce a fixed orthonormal frame $(e_1, e_2, e_3)$ in the space such that $e_1$ and $e_2$ are contained in plane, and a moving orthonormal frame $(f_1, f_2, f_3)$ attached to the sphere. Let a point of the sphere have coordinates $(x,y,z)$ in the  fixed frame $(e_1, e_2, e_3)$, and coordinates $(X,Y,Z)$ in the moving frame $(f_1, f_2, f_3)$, i.e., 
$$xe_1 + ye_2 + ze_3 = Xf_1 + Yf_2 + Zf_3.$$
Then the orthogonal matrix $R$ such that
\begin{align*}
R
\begin{pmatrix}
x\\
y\\
z
\end{pmatrix}
=
\begin{pmatrix}
X\\
Y\\
Z
\end{pmatrix}
\end{align*}
determines orientation of the sphere in the space. We have 
$$ R \in \SO(3) = \{ A \in \mathbb{R}^{3 \times 3} \mid A^T = A^{-1},\quad \det A = 1 \}. $$
Then our problem is written as follows:
\begin{align*}
&\dot{x} = u, \qquad q = (x,y,R) \in \mathbb{R}^2 \times \SO(3),\\
&\dot{y} = v, \qquad (u, v) \in \mathbb{R}^2,\\
&\dot{R} = R 
\begin{pmatrix}
0 & 0 & -u\\
0 & 0 & -v\\
u & v & 0
\end{pmatrix},\\
&q(0) = q_0, \quad q(t_1) = q_1, \\
&l = \int^{t_1}_0 \sqrt{u^2 + v^2} dt \to \min.
\end{align*}
\paragraph{Example 7: Antropomorphic curve reconstruction}
Suppose that a greyscale image is given by a set of isophotes (level lines of brightness). Let the image be corrupted in some domain, and our goal is to reconstruct it antropomorphically, i.e., close to the way a human brain does. Consider a particular problem of antropomorphic reconstruction
of a curve.

According to a discovery of Hubel and Wiesel (Nobel prize 1981), a human brain stores curves not as sequences of planar points $(x_i, y_i)$, but as sequences of positions and orientations $(x_i, y_i, \theta_i)$. Moreover, an established model of the primary visual cortex $V1$ of the human brain states that corrupted curves of images are reconstructed according to a variational principle, i.e., in a way that minimizes the activation energy of neurons required for drawing the missing part of the curve.

So the discovery by Hubel and Wiesel states that the human brain lifts images $(x(t), y(t))$ from the plane to the space of positions and orientations $(x(t), y(t), \theta(t))$. The lifted
curve is a solution to the control system
\begin{align*}
&\dot{x} = u \cos \theta, \quad q = (x, y, \theta) \in \mathbb{R}^2 \times S^1, \\
&\dot{y} = u \sin \theta,\\
&\dot{\theta} = v,
\end{align*}
with the boundary conditions provided by endpoints and tangents of the corrupted curve:
\begin{align*}
 q(0) = q_0, \quad q(t_1) = q_1. 
\end{align*}
Moreover, the activation energy of neurons required to draw the corrupted curve is given by the integral to be minimized:
\begin{align*}
J = \int^{t_1}_0 (u^2 + v^2)dt \to \min.
\end{align*}
By Cauchy-Schwarz inequality, minimization of the energy $J$ is equivalent to minimization of the length functional
\begin{align*}
l = \int^{t_1}_0 \sqrt{u^2 + v^2} dt \to \min.
\end{align*}
We have a remarkable fact: optimal trajectories for the Reeds-Shepp car provide solutions to the problem of antropomorphic curve reconstruction.
\subsection{Control systems and problems}
\subsubsection{Dynamical systems and control systems}
A smooth dynamical system, or an ODE on a smooth manifold, is given by an equation
\begin{align}
\label{DS}
\dot{q} = f(q), \quad q \in M,
\end{align}
where $f \in \Vec(M)$ is a smooth vector field on $M$. A basic property of a dynamical system is that it is deterministic, i.e., given an initial condition $q(0) = q_0$ and a time $t > 0$, there exists a unique solution $q(t)$ to ODE \eqref{DS}.

A control system is obtained from dynamical system \eqref{DS} if we add a
control parameter $u$ in the right-hand side:
\begin{align}
\label{CS}
\dot{q} = f(q, u), \quad q \in M, \quad u \in U.
\end{align}
The control parameter varies in a set of control parameters $U$ (usually a subset of $\mathbb{R}^m$). This parameter can change in time: we can choose a function $u = u(t) \in U$ and substitute it to the right-hand side of control system \eqref{CS} to obtain a nonautonomous ODE
\begin{align}
\label{NODE}
\dot{q} = f(q, u(t)).
\end{align}
Together with an initial condition
\begin{align}
\label{init}
q(0) = q_0,
\end{align}
ODE \eqref{NODE} determines a unique solution --- a trajectory $q_u (t)$, $t> 0$, of control system \eqref{CS} corresponding to the control $u(t)$ and initial condition \eqref{init}.

For another control $\tilde{u}(t)$, we get another trajectory $q_{\tilde{u}}(t)$ with initial condition \eqref{CS}.

Regularity assumptions for control $u(\cdot)$ can vary from a problem to a problem; typical examples are piecewise constant controls or Lebesgue measurable bounded controls. The controls considered in a particular problem are called admissible controls.

If we fix initial condition \eqref{init} and vary admissible controls, we get a new object --- attainable set of control system \eqref{CS} for arbitrary times:
\begin{align*}
A_{q_0} = \{ q_u (t) \mid q_u(0) = q_0, \quad u \in L^{\infty} ([0, +\infty), U) \}.
\end{align*}
For a dynamical system, the attainable set is not considered since it is just a positive-time half-trajectory. But for control systems, the attainable set is a non-trivial object, and its study is one of the central problems
of control theory.

If we apply restrictions on the terminal time of trajectories, we get restricted attainable sets:
\begin{align*}
&A_{q_0} (T) = \{ q_u (T)  \mid  q_u(0) = q_0, \quad u \in L^{\infty} ([0, T], U) \},\\
&A_{q_0} (\le T)  = \bigcup^T_{t=0} A_{q_0} (t).
\end{align*}

\subsubsection{Controllability problem}
\begin{definition}
A control system \eqref{CS} is called:
\begin{itemize}
\item globally (completely) controllable, if $A_{q_0} = M$ for any $q_0 \in M$,
\item globally controllable from a point $q_0 \in M$ if $A_{q_0} = M$,
\item locally controllable at $q_0$ if $q_0 \in \intt A_{q_0}$,
\item small time locally controllable (STLC) at $q_0$ if $q_0 \in \intt A_{q_0} (\le T)$ for any $T > 0$.
\end{itemize}
\end{definition}
Even the local controllability problem is rather hard to solve: there exist necessary conditions and sufficient conditions for STLC for arbitrary dimension of the state space $M$, but local controllability tests  are available only for the case $\dim M = 2$. The global controllability problem is naturally much more harder: there exist global controllability conditions only for very symmetric systems: linear systems, left-invariant systems on Lie groups.

\subsubsection{Optimal control problem}
Suppose that for control system \eqref{CS} the controllability problem between points $q_0, q_1 \in M$ is solved positively. Then typically the points $q_0, q_1$ are connected by more that one trajectory 
of the control system (usually by continuum of trajectories). Then there naturally arises the question of the best (optimal in a certain sense) trajectory connecting $q_0$ and $q_1$. In order to measure the quality of trajectories (controls), introduce a cost functional to be minimized: $J = \int^{t_1}_0 \varphi (q, u) dt$.  Thus we get an optimal control problem:
\begin{align*}
&\dot{q} = f(q, u), \quad q \in M, \quad u \in U,\\
&q(0) = q_0, \quad q(t_1) = q_1,\\
&J = \int^{t_1}_0 \varphi (q, u) dt \to \min.
\end{align*}
Here the terminal time $t_1$ may be fixed or free.

The optimal control problem is also rather hard to solve --- this is an optimization problem in an infinite-dimensional space. There exist general necessary optimality
conditions (the most important of which are first order optimality conditions given by Pontryagin Maximum Principle) and general sufficient optimality conditions (second-order and higher-order). But optimality tests are available only for special classes of problems (linear, linear-quadratic, convex problems). 

\subsection{Smooth manifolds and vector fields}
Here we recall, very briefly, some basic facts of calculus on smooth manifolds, for details consult a regular textbook (e.g., \cite{warner, sternberg}).
\subsubsection{Smooth manifolds}
A $k$-dimensional smooth submanifold $M \subset \mathbb{R}^n$ is defined by one of equivalent ways: 

a) implicitly by a system of regular equations:
\begin{align*}
&f_1 (x) = \dots = f_{n-k} (x) = 0, \quad x \in \mathbb{R}^n, \\ 
&\rank \left(\frac{\partial f_1}{\partial x}, \dots, \frac{\partial f_{n-k}}{\partial x}\right) = n - k,
\end{align*}
or 

b) by a regular parameterization:
\begin{align*}
&x_1 = \Phi_1 (y), \dots, \quad x_n = \Phi_n (y), \qquad y \in \mathbb{R}^k, \quad x  \in \mathbb{R}^n,\\
&\rank \left(\frac{\partial \Phi_1}{\partial y}, \dots, \frac{\partial \Phi_n}{\partial y}\right) = k.
\end{align*}

An abstract smooth $k$-dimensional manifold $M$ (not embedded into $\mathbb{R}^n$) is defined via a system of charts  that agree mutually.

The tangent space to a smooth submanifold $M \subset \mathbb{R}^n$ at a point $x \in M$ is defined as follows for the two above definitions of a submanifold:
\begin{enumerate}[label=(\alph*)]
\item $T_x M = \Ker \frac{\partial f}{\partial x},$
\item $T_x M = \Im \frac{\partial \Phi}{\partial y}.$
\end{enumerate}

Now let $M$ be an abstract smooth manifold. Consider smooth curves $\varphi \colon (-\varepsilon, \varepsilon) \to M$. Then the velocity vector $\dot{\varphi}(0) = \frac{d \varphi}{dt}  (0)$ is defined as the equivalence class
of all smooth curves with $\varphi (0) = q$ and with the same 1-st order Taylor polynomial.

The tangent space to $M$ at a point $q$ is the set of all tangent vectors to $M$ at $q$:
\begin{align*}
T_q M = \{ \dot{\varphi}(0)  \mid \varphi \colon  (-\varepsilon, \varepsilon) \to M \textrm{ smooth,} \quad \varphi (0) = q \}.
\end{align*}
\subsubsection{Smooth vector fields and Lie brackets}
A smooth vector field on $M$ is a smooth mapping
$$ M \ni q \mapsto V(q) \in T_q M.$$
Notation: $V \in \Vec (M)$.

A trajectory of $V$ through a point $q_0 \in M$ is a solution to the Cauchy problem:
\begin{align*}
\dot{q}(t) = V(q(t)), \quad q(0) = q_0.
\end{align*}
Suppose that a trajectory $q(t)$ exists for all times $t \in \mathbb{R}$, then we denote
$e^{tV} (q_0) := q(t)$. The one-parameter group of diffeomorphisms $e^{tV} \colon M \to M$ is the flow of the vector field $V$. 

Consider two vector fields $V,W \in \Vec(M)$. We say that $V$ and $W$ commute if their flows commute:
$$e^{tV} \circ e^{sW} = e^{sW} \circ e^{tV}, \quad t,s \in \mathbb{R}.$$
In the general case $V$ and $W$ do not commute, thus $e^{tV} \circ e^{sW} \ne e^{sW} \circ e^{tV}$, moreover, $e^{tV} \circ e^{tW} \ne e^{tW} \circ e^{tV}$. Thus the curve
$$ \gamma (t) = e^{-tW} \circ e^{-tV} \circ e^{tW} \circ e^{tV} (q) $$
satisfies the inequality $\gamma (t) \ne q$, $t \in \R$. The leading nontrivial term of the Taylor expansion of $\gamma (t)$, $t \to 0$, is taken as the measure of noncommutativity of vector fields $V$ and $W$. Namely, we have:
$\gamma (0) = 0$, $\dot{\gamma} (0) = 0$, $\ddot{\gamma}(0) \ne 0$ generically. Thus the commutator (Lie bracket) of vector fields $V, W$ is defined as 
$$ [V, W] (q) := \frac12 \ddot{\gamma} (0), $$
so that 
$$ \gamma (t) = q + t^2 [V, W] (q) + o(t^2), \qquad t \to 0.$$

\begin{exercise}
Prove that in local coordinates
$$ [V, W] = \frac{\partial W}{\partial x}V - \frac{\partial V}{\partial x}W. $$
\end{exercise}

\paragraph{Example: Reeds-Shepp car}
Consider the vector fields in the right-hand side of the control system
\begin{align*}
&\begin{pmatrix}
\dot{x}\\
\dot{y}\\
\dot{\theta}
\end{pmatrix}
= u
\begin{pmatrix}
\cos \theta \\
\sin \theta \\
0
\end{pmatrix}
 + v
 \begin{pmatrix}
 0 \\
 0 \\
 1
 \end{pmatrix},\\
 &V = \cos \theta \frac{\partial}{\partial x} + \sin \theta \frac{\partial}{\partial y}, \quad W = \frac{\partial}{\partial \theta}.
 \end{align*}
Compute their Lie bracket:
\begin{align*}
[V, W] = \frac{\partial W}{\partial q}V - \frac{\partial V}{\partial q}W = 0 \cdot V - 
 \begin{pmatrix}
0 & 0 & -\sin \theta \\
0 & 0 & \cos \theta \\
0 & 0 & 0 \\
 \end{pmatrix}
 \begin{pmatrix}
 0 \\
 0 \\
 1 
\end{pmatrix}= 
\begin{pmatrix}
\sin \theta \\
-\cos \theta \\
0
\end{pmatrix}.
\end{align*}

There is another way of computing Lie brackets, via commutator of differential operators corresponding to vector fields:
\begin{align*}
[V, W] &= V \circ W - W \circ V = \left(\cos \theta \frac{\partial}{\partial x} + \sin \theta \frac{\partial}{\partial y}\right) \frac{\partial}{\partial \theta} - \frac{\partial}{\partial \theta} \left(\cos \theta \frac{\partial}{\partial x} + \sin \theta \frac{\partial}{\partial y}\right) = \\
&= \sin \theta \frac{\partial}{\partial x} - \cos \theta \frac{\partial}{\partial y}.
\end{align*}
Notice the visual meaning of the vector fields $V, W, [V, W]$ for the car in the plane:
\begin{itemize}
\item $V$ generates the motion forward,
\item $W$ generates rotations of the car,
\item $[V, W]$ generates motion of the car in the direction perpendicular to its orientation, thus physically forbidden.
\end{itemize}
Choosing alternating motions of the car: forward $\to$ rotation counterclockwise $\to$ backward $\to$ rotation clockwise, we can move the car infinitesimally in the forbidden direction. So the Lie bracket $[V, W]$ is generated by a car during parking maneuvers in a limited space.

\subsection{Exercises}
\begin{enumerate}
\item Describe $A_{q_0}$ for Examples 1--5. Which of these systems is controllable?
\item Describe in Example 6:
$$ \Lie_{q_0} (X_1, X_2) = \spann (X_1 (q), X_2 (q), [X_1, X_2](q), [X_1, [X_1, X_2]](q), [X_2, [X_1, X_2]] (q), \dots),$$
where $X_1$ and $X_2$ are vector fields
in the right-hand side of the system:
$$ \dot{q} = u_1 X_1 + u_2 X_2, \quad q \in \mathbb{R}^2 \times \SO(3).$$
\item Show that $S^2$ and $\SO(3)$ are smooth submanifolds. Compute their tangent spaces.
\item Prove in Example 7:
$$ l \to \min \Leftrightarrow J \to \min.$$
\end{enumerate}
\section{Controllability}
In this section we present some basic facts on the controllability problem. The central result is the Orbit theorem, see Th. \ref{th:orbit}. 
\subsection{Controllability of linear systems}
We start from the simplest class of control systems, quite popular in applications.

Linear control systems have the form
\begin{align}
\label{LS}
&\dot{x} = Ax + \sum^k_{i=1} u_i b_i = Ax + Bu,\\
\nonumber
&x \in \mathbb{R}^n, \quad u \in \mathbb{R}^k, \quad u \in L^1 ([0, T], \mathbb{R}^k).
\end{align}
It is easy to find solutions to such systems by the variation of constants method:
\begin{align*}
&x = e^{At}C, \quad C = C(t), \\
&\dot{x} = A e^{At}C + e^{At}\dot{C} = A e^{At}C + Bu, \\
&\dot{C} = e^{-At}Bu(t), \\
&C = \int^t_0 e^{-As} Bu(s)ds +C_0, \\
&x = e^{At} \left(\int^t_0 e^{-As} Bu(s)ds +C_0 \right), \\
&x(0) = C_0 = x_0, \\
&x(t) = e^{At}\left(x_0 + \int^t_0 e^{-As} Bu(s)ds \right).
\end{align*}
Here $e^{At} = \Id + At + \frac{A^2 t^2}{2!} + \dots + \frac{A^n t^n}{n!} + \dots$ is the matrix exponential.
\begin{definition}
A linear system \eqref{LS} is called controllable from a point $x_0 \in \R^n$ for time $T > 0$ (for time not greater than $T$) if 
\begin{align*}
 & A_{x_0} (T) = \mathbb{R}^n 
\qquad ( \textrm{resp. }   A_{x_0} ( \le T) = \mathbb{R}^n).
 \end{align*}
\end{definition}

\begin{theorem}[Kalman controllability test]
Let $T > 0$ and $x_0 \in \R^n$.
Linear system $(\ref{LS})$ is controllable from $x_0$ for time $T$ iff
\begin{align}
\label{Kalman}
\spann (B, AB, \dots, A^{n - 1} B) = \mathbb{R}^n.
\end{align}
\end{theorem}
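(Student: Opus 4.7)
The plan is to reduce the controllability statement to surjectivity of a single linear operator and then extract the Kalman condition via analyticity and Cayley--Hamilton. By the variation-of-constants formula derived just above the theorem,
$$x(T) = e^{AT}\left(x_0 + \int_0^T e^{-As} B u(s)\, ds\right),$$
so $A_{x_0}(T) = e^{AT} x_0 + e^{AT} \mathcal{R}_T$, where $\mathcal{R}_T := \{\int_0^T e^{-As} B u(s)\, ds : u \in L^1([0,T],\R^k)\}$. Since $e^{AT}$ is invertible, the theorem is equivalent to the statement $\mathcal{R}_T = \R^n \iff \spann(B, AB, \dots, A^{n-1}B) = \R^n$.

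Next, I would describe the orthogonal complement of the linear subspace $\mathcal{R}_T \subset \R^n$. For $p \in \R^n$,
$$\langle p, \int_0^T e^{-As} B u(s)\, ds\rangle = \int_0^T \langle B^T e^{-A^T s} p, u(s)\rangle\, ds,$$
so $p \perp \mathcal{R}_T$ iff this integral vanishes for every admissible $u$. Substituting the bounded test control $u(s) = B^T e^{-A^T s} p$ gives $\int_0^T \|B^T e^{-A^T s} p\|^2\, ds = 0$, and continuity in $s$ then forces $B^T e^{-A^T s} p \equiv 0$ on $[0,T]$. This analytic step is the one that needs a bit of care; the test-control trick is the cleanest way I know to do it.

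Differentiating the identity $k$ times at $s = 0$ yields $p^T A^k B = 0$ for all $k \ge 0$. Conversely, by Cayley--Hamilton, every $A^k$ with $k \ge n$ is a polynomial of degree less than $n$ in $A$, so the relations $p^T A^k B = 0$ for $k = 0, 1, \dots, n-1$ already imply $p^T A^k B = 0$ for all $k$, hence $p^T e^{-As} B = 0$ for all $s$ by summing the exponential series. Therefore $\mathcal{R}_T^{\perp} = \spann(B, AB, \dots, A^{n-1}B)^{\perp}$, and passing to orthogonal complements gives the desired equivalence.

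An attractive by-product of this argument, worth flagging before writing it up, is that the resulting criterion is independent of the horizon $T > 0$: controllability in some positive time, in arbitrarily small time, and in every positive time are all equivalent for linear systems. The only genuinely delicate point is the density/duality step in the second paragraph; the rest reduces to differentiating a matrix-valued analytic function and invoking Cayley--Hamilton.
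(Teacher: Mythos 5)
Your proof is correct and follows the same strategy as the paper: reduce controllability to surjectivity of the linear map $u \mapsto \int_0^T e^{-As}Bu(s)\,ds$ (after the variation-of-constants formula and invertibility of $e^{AT}$), dualize by considering a nonzero annihilating covector $p$, extract $p^T A^k B = 0$ for all $k\ge 0$ by differentiating an analytic identity at the origin, and close the loop with Cayley--Hamilton. The one place you diverge is in showing that a nonzero annihilator of $\mathcal{R}_T$ forces $p^T e^{-As}B \equiv 0$ on $[0,T]$: the paper feeds in the step-function controls $u(s) = e_i$ on $[0,\tau]$, $0$ on $[\tau,T]$, obtains the $\tau$-dependent identity $p\,\bigl(\Id - e^{-A\tau}\bigr)A^{-1}B = 0$ (interpreted as a power series), and differentiates in $\tau$; you instead substitute the Gramian-style test control $u(s) = B^T e^{-A^T s}p$, which collapses the pairing to $\int_0^T \|B^T e^{-A^T s}p\|^2\,ds = 0$ and forces pointwise vanishing immediately. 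Your variant is marginally cleaner --- it avoids the power-series bookkeeping and unifies necessity and sufficiency into a single orthogonal-complement computation --- but the skeleton (duality, Taylor coefficients, Cayley--Hamilton, and the resulting $T$-independence of the criterion, which the paper records separately as a corollary) is the same.
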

\begin{proof}
The mapping $u(\cdot) \in L^1 \mapsto x (T) \in \mathbb{R}^n$ is affine, thus its image $A_{x_0} (T)$ is an affine subspace of $\mathbb{R}^n$. Further we rewrite the controllability condition:
\begin{align*}
A_{x_0} (T) = \mathbb{R}^n &\Leftrightarrow \Im~e^{AT} \left(x_0 + \int^T_0 e^{-At} Budt \right) = \mathbb{R}^n \Leftrightarrow \\
&\Leftrightarrow \Im  \int^T_0 e^{-At} Bu(t)dt = \mathbb{R}^n.
\end{align*}

Now we prove the necessity. Let $A_{x_0} (T) = \mathbb{R}^n,$ but $\spann (B, AB, \dots, A^{n-1}B) \ne \mathbb{R}^n$. Then there exists a covector $0 \ne p \in \mathbb{R}^{n*}$ such that 
$$ p A^i B = 0, \quad i = 0, \dots, n-1.$$
By the Cayley-Hamilton theorem, $A^n = \sum^{n-1}_{i=0} \alpha_i A^i$ for some $\alpha_i \in \mathbb{R}$. Thus 
$$ A^m =  \sum^{n-1}_{i=0} \beta^m_i A^i, \quad \beta^m_i \in \mathbb{R}, \quad m \in \mathbb{N}. $$
Consequently,
\begin{align*}
&p A^m B = \sum^{n-1}_{i=0} \beta^m_i p A^i B = 0, \qquad m \in \N,\\
&p e^{-A} B = p \sum^{\infty}_{m=0} \frac{(-A)^m}{m!}B = 0,
\end{align*}
and $\Im \int^T_0 e^{-At} Bu(t)dt \ne \mathbb{R}^n$, contradiction. 

Then we prove the sufficiency. Let $\spann (B, 
AB, \dots, A^{n - 1} B) = \mathbb{R}^n$, but $\Im \int^T_0 e^{-At} Budt \ne \mathbb{R}^n$. Then there exists a covector $0 \ne p \in \mathbb{R}^{n*}$ such that 
\begin{align*}
p \int^T_0 e^{-At} Bu(t)dt = 0 \qquad \forall u \in L^1 ([0, T], \mathbb{R}^k).
\end{align*}
Let $e_1, \dots, e_k$ be the standard frame in $\mathbb{R}^k$. Define the following controls:
\begin{align*}
u(t) = \left\{
\begin{array}{ll}
e_i, & t \in [0, \tau],\\
0, & t \in [\tau, T].
\end{array}
\right.
\end{align*}
We have
\begin{align*}
\int^T_0 e^{-At} Bu(t)dt  = \int^{\tau}_0 e^{-At} b_i dt = \frac{\Id - e^{-A\tau}}{A} b_i,
\end{align*}
thus
\begin{align}
\label{pId0}
p \frac{\Id - e^{-A\tau}}{A} B = 0,
\end{align}
where
\begin{align*}
\frac{\Id - e^{-A\tau}}{A} = - (-\tau \Id + \tau^2 A - \dots + \frac{(-\tau)^m}{(m - 1)!} A^{m - 1} + \dots).
\end{align*}
We differentiate successively identity \eqref{pId0} at $\tau = 0$ and obtain
$$ pB = pAB = \dots = pA^{n - 1} B = 0, $$
thus $\spann (B, AB, \dots, A^{n - 1} B) \ne \mathbb{R}^n$, contradiction.
\end{proof}
Condition \eqref{Kalman} is called Kalman controllability condition.
\begin{corollary}
The following conditions are equivalent:
\begin{itemize}
\item Kalman controllability condition \eqref{Kalman},
\item $\forall t > 0 \ \forall x_0 \in \R^n$ linear system \eqref{LS} is controllable from $x_0$ for time $t$,
\item $\forall t > 0 \ \forall x_0 \in \R^n$ linear system \eqref{LS} is controllable from $x_0$ for time not greater than $t$,
\item $\exists t > 0 \ \exists x_0 \in \R^n$ linear system \eqref{LS} is controllable from $x_0$ for time $t$,
\item $\exists t > 0 \ \exists x_0 \in \R^n$ linear system \eqref{LS} is controllable from $x_0$ for time not greater than $t$.
\end{itemize}
\end{corollary}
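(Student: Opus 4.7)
The plan is to close the chain of implications (1) $\Rightarrow$ (2) $\Rightarrow$ (3) $\Rightarrow$ (5) $\Rightarrow$ (1), together with (1) $\Rightarrow$ (4) and (4) $\Rightarrow$ (1). Three of the arrows are essentially tautological or follow from the Kalman theorem as just proved: apply the Kalman test with $T$ and $x_0$ ranging to obtain (1) $\Rightarrow$ (2); the inclusion $A_{x_0}(T)\subseteq A_{x_0}(\le T)$ gives (2) $\Rightarrow$ (3); specializing to one pair $(T,x_0)$ gives (3) $\Rightarrow$ (5) and (2) $\Rightarrow$ (4); and (4) $\Rightarrow$ (1) is again the necessity direction of the Kalman theorem. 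So the only implication that needs a separate argument is (5) $\Rightarrow$ (1).

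For (5) $\Rightarrow$ (1) I would argue by contraposition. Assume Kalman fails, so there exists $0\ne p\in\mathbb{R}^{n*}$ with $pA^iB=0$ for $i=0,\dots,n-1$. The argument already used in the necessity part of the Kalman proof (Cayley-Hamilton plus termwise expansion of the exponential) shows
\[
p\,e^{A\tau}B = 0 \qquad \text{for every } \tau\in\mathbb{R}.
\]
Consequently, for any $x_0\in\mathbb{R}^n$, $T>0$ and any control $u\in L^1([0,T],\mathbb{R}^k)$,
\[
p\,x_u(t) \;=\; p\,e^{At}x_0 + p\int_0^t e^{A(t-s)}Bu(s)\,ds \;=\; p\,e^{At}x_0, \qquad t\in[0,T].
\]
Thus the image of $A_{x_0}(\le T)$ under the functional $p$ is contained in $\{p\,e^{At}x_0 : t\in[0,T]\}$, which is the continuous image of the compact interval $[0,T]$ in $\mathbb{R}$, hence bounded. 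Since $p\ne0$ acts surjectively onto $\mathbb{R}$, we get $A_{x_0}(\le T)\ne\mathbb{R}^n$; this holds for every pair $(x_0,T)$, contradicting (5).

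The only genuine content lies in the step (5) $\Rightarrow$ (1): one must observe that the failure of Kalman supplies a covector $p$ that is invariant under the reachable dynamics in the sense $p\,e^{A\tau}B\equiv 0$, so that the "controllable part" of the state never escapes the hyperplane $\{p\,x = p\,e^{At}x_0\}$, a moving hyperplane whose trace over $t\in[0,T]$ is a bounded strip. Everything else is a bookkeeping exercise assembling the quantifier-shuffled versions of the Kalman theorem. I would write the argument in precisely this order so that the single nontrivial estimate (compactness of $\{p\,e^{At}x_0:t\in[0,T]\}$) appears once and clearly.
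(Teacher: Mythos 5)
The paper states this corollary without proof, presenting it as an immediate consequence of the Kalman controllability test, so there is no explicit "paper route" to compare against. Your argument is correct and well organized. The key observation is exactly right: since the Kalman condition is independent of $T$ and $x_0$, the theorem already gives $(1)\Leftrightarrow(2)\Leftrightarrow(4)$ for free, and $A_{x_0}(T)\subseteq A_{x_0}(\le T)$ handles the downward arrows to $(3)$ and $(5)$; the sole implication requiring new content is $(5)\Rightarrow(1)$, because a union of proper affine subspaces over a continuum of times could in principle fill $\mathbb{R}^n$, so one cannot simply deduce $A_{x_0}(T)=\mathbb{R}^n$ from $A_{x_0}(\le T)=\mathbb{R}^n$. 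Your resolution --- pick the annihilating covector $p$, extend $pA^iB=0$ $(i<n)$ to $pe^{A\tau}B\equiv 0$ via Cayley--Hamilton exactly as in the necessity half of the Kalman proof, and then note that $p\bigl(A_{x_0}(\le T)\bigr)\subseteq\{pe^{At}x_0 : t\in[0,T]\}$ is the continuous image of a compact interval, hence bounded, hence a proper subset of $\mathbb{R}$ --- is clean and uses only material already present in the paper's proof of the Kalman theorem. This is the natural way to fill the gap the paper leaves implicit; I see no errors.
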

In these cases linear system \eqref{LS} is called controllable.

\begin{remark}
For linear systems, controllability for the class of admissible controls $u(\cdot) \in L^1$ is equivalent to controllability for any class of admissible controls $u(\cdot) \in L$ where $L$ is a linear subspace of $L^1$ containing piecewise constant functions. 
\end{remark}
\subsection{Local controllability of nonlinear systems}
Consider now a nonlinear system
\begin{align}
\label{NS}
\dot{x} = f(x, u), \quad x \in \mathbb{R}^n, \quad u \in U \subset \mathbb{R}^m.
\end{align}
Admissible controls are $u(\cdot) \in L^{\infty}([0,T],U)$.

A point $(x_0, u_0) \in \mathbb{R}^n \times U$  is called an equilibrium point of system \eqref{NS} if $f(x_0, u_0) = 0$. We will suppose that
\begin{align}
\label{u0int}
u_0 \in \intt~U
\end{align}
and consider the linearization of system \eqref{NS} at the equilibrium point $(x_0, u_0)$:
\begin{align}
\label{LNS}
&\dot{y} = Ay +Bv, \quad y \in \mathbb{R}^n, \quad v \in \mathbb{R}^m, \\
\nonumber
&A = \frac{\partial f}{\partial x}|_{(x_0, u_0)}, \quad B = \frac{\partial f}{\partial u}|_{(x_0, u_0)}.
\end{align}
It is natural to expect that global properties of linearization \eqref{LNS} 
imply the corresponding local properties of nonlinear system \eqref{NS}. Indeed, there holds the following statement.
\begin{theorem}[Linearization principle for controllability]
If linearization \eqref{LNS} is controllable at an equilibrium point $(x_0, u_0)$ with \eqref{u0int}, then nonlinear system \eqref{NS} satisfies the property:
\begin{align*}
\forall \, T > 0 \quad x_0 \in \intt A_{x_0} (T).
\end{align*}
The more so, nonlinear system is STLC at $x_0$.
\end{theorem}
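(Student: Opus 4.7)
The plan is to realize $F \colon u(\cdot) \mapsto x_u(T)$ as a smooth map from an $L^\infty$-open neighbourhood of the constant control $u \equiv u_0$ into $\R^n$, and then show that its Fr\'echet derivative at $u \equiv u_0$ is a surjection onto $\R^n$. Surjectivity of the derivative, combined with a finite-dimensional reduction, will yield $x_0 \in \intt A_{x_0}(T)$ for every fixed $T > 0$; STLC then follows immediately from $A_{x_0}(T) \subseteq A_{x_0}(\le T)$.

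First, because $(x_0, u_0)$ is an equilibrium, the constant control $u \equiv u_0$ produces the stationary trajectory $x(t) \equiv x_0$, so $F(u_0) = x_0$. Linearising the ODE $\dot{x} = f(x, u(t))$ along this trajectory via variation of constants shows that for perturbations $\delta u \in L^\infty([0,T], \R^m)$ the derivative acts by
\[
DF(u_0)[\delta u] = \int_0^T e^{A(T-s)} B \, \delta u(s) \, ds,
\]
with $A$, $B$ as in \eqref{LNS}. The image of this linear operator is precisely the time-$T$ attainable set from the origin of the linearisation \eqref{LNS}.

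Next I invoke the Kalman test applied to \eqref{LNS}: by hypothesis condition \eqref{Kalman} holds, so the linearisation's attainable set at time $T$ equals $\R^n$, and $DF(u_0)$ is surjective. I then choose $v_1, \dots, v_n \in L^\infty([0,T], \R^m)$ whose images $DF(u_0)[v_i]$ form a basis of $\R^n$. Since $u_0 \in \intt U$ and each $v_i$ is bounded, the map
\[
G(a_1, \dots, a_n) = F\Bigl(u_0 + \sum_{i=1}^n a_i v_i \Bigr)
\]
is well defined for $\|a\|$ small in $\R^n$ and takes values in $A_{x_0}(T)$. By construction $DG(0)$ is an isomorphism of $\R^n$, so the classical inverse function theorem produces a neighbourhood of $x_0 = G(0)$ contained in $\Im G \subseteq A_{x_0}(T)$. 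This proves $x_0 \in \intt A_{x_0}(T)$, and since $T>0$ was arbitrary, STLC at $x_0$ follows.

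The main obstacle is verifying that $F$ is genuinely $C^1$ on an $L^\infty$-neighbourhood of $u \equiv u_0$ with the claimed derivative formula. This rests on smooth dependence of solutions of \eqref{NS} on a parameter living in the infinite-dimensional Banach space $L^\infty$, which requires a Gronwall-type estimate upgrading the pointwise Taylor expansion of $f$ in $u$ to norm-differentiability of the endpoint map. Once that is in hand, the finite-dimensional reduction to $G$ sidesteps any Banach-space implicit function theorem, collapsing the argument to a classical inverse-function statement in $\R^n$ whose hypotheses are furnished by the Kalman criterion.
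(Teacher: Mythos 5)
Your proposal is essentially the paper's argument: both restrict to the $n$-parameter affine family of controls $u_0 + \sum_i z_i v_i$, where the $v_i$ steer the linearization from $0$ to a basis of $\R^n$, and then apply the finite-dimensional inverse function theorem to the resulting endpoint map $z \mapsto x(z, T)$. The one difference worth noting concerns the smoothness issue you flag as the ``main obstacle'': the paper never asserts Fr\'echet differentiability of the full infinite-dimensional endpoint map $u(\cdot) \mapsto x_u(T)$ on an $L^\infty$-neighbourhood. Instead it parameterizes trajectories directly by $z \in \R^n$ from the start, so that only the standard finite-dimensional theorem on smooth dependence of ODE solutions on parameters is needed; differentiating the ODE in $z$ at $z=0$ gives the variational equation $\dot W = AW + B(v_1, \dots, v_n)$, $W(0)=0$, whence $W(T) = (e_1, \dots, e_n)$ is invertible. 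This removes the Banach-space differentiability question rather than deferring it, and is the cleaner way to cast the same idea.
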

\begin{proof} Fix any $T > 0$. Let $e_1, \dots, e_n$ be the standard frame in $\mathbb{R}^n$. Since linear system \eqref{LNS} is controllable, then
\begin{align}
\label{y0T}
\forall i = 1, \dots, n \quad \exists v_i \in L^{\infty} ([0, T], \mathbb{R}^m): \quad y_{v_i} (0) = 0, \quad y_{v_i} (T) = e_i.
\end{align}
Construct the following family of controls:
\begin{align*}
u(z, t) = u_0 + z_1 v_1 (t) + \dots + z_n v_n (t), \quad z \in \mathbb{R}^n.
\end{align*}
By condition \eqref{u0int}, for sufficiently small $|z|$ the control $u(z, t) \in U$, thus it is admissible for nonlinear system \eqref{NS}. Consider the corresponding family of trajectories of \eqref{NS}:
\begin{align*}
x(z, t) = x_{u(z, t)}(t), \quad x(z, 0) = x_0, \quad z \in \mathbb{R}^n.
\end{align*}
Let $B$ be a small open ball in $\mathbb{R}^n$ centered at the origin. Since
$$ x(z, T) \in A_{x_0}(T), \quad z \in B, $$
then the mapping
$$ F \colon z \mapsto x(z, T), \quad B \to \mathbb{R}^n $$ satisfies the inclusion
$$ F(B) \subset A_{x_0}(T).$$
It remains to show that $x_0 \in \intt F(B)$. To this end define the matrix function
$$ W(t) = \frac{\partial x (z, t)}{\partial z} |_{z=0}.$$
We show that $\det W(T) = \frac{\partial F}{\partial z} |_{z=0}\ne 0$. This would imply $x_0 = F(0) \in \intt F(B) \subset A_{x_0}(T)$.

Differentiating the identity $\frac{\partial x}{\partial t} = f(x, u(z, t))$ w.r.t. $z$, we get
$$ \frac{\partial}{\partial t} \frac{\partial x}{\partial z}|_{z=0} = \frac{\partial f}{\partial x}|_{(x_0, u_0)} \frac{\partial x}{\partial z}|_{z=0} + \frac{\partial f}{\partial u}|_{(x_0, u_0)} \frac{\partial u}{\partial z}|_{z=0} $$
since $u(0, t) \equiv u_0$ and $x(0, t) \equiv x_0$. Thus we get a matrix ODE
\begin{align}
\label{Wdot}
\dot{W}(t) = AW(t) + B(v_1 (t), \dots, v_n (t))
\end{align}
with the initial condition
$$ W(0) = \frac{\partial x (z, 0)}{\partial z}|_{z=0} = \frac{\partial x_0}{\partial z}|_{z=0} = 0. $$
ODE \eqref{Wdot} means that columns of the matrix $W(t)$ are solutions to linear system \eqref{LNS} with the control $v_i (t)$. By condition \eqref{y0T} we have $W(T) = (e_1, \dots, e_n)$, so $\det W(T) = 1 \ne 0$.

By implicit function theorem, we have $x_0 \in \intt F(B)$, thus $x_0 \in \intt A_{x_0}(T)$.
\end{proof}
\subsection{Orbit theorem}
Let $\mathcal{F} \subset \Vec(M)$ be an arbitrary family of smooth vector fields. 
We assume for simplicity that all vector fields in $\mathcal F$ are complete, i.e., have trajectories defined for any real time.
The attainable set of the family $\mathcal{F}$ from a point $q_0 \in M$ is defined as
$$ A_{q_0} = \{ e^{t_N f_N} \circ \dots \circ e^{t_1 f_1} (q_0) \mid t_i \ge 0, \quad f_i \in \mathcal{F}, \quad N \in \mathbb{N}\}. $$
If we parameterize $\mathcal{F}$ by a control parameter $u$, such attainable set corresponds to piecewise constant controls and arbitrary nonnegative times.

Before studying attainable set, we consider a bigger set --- the orbit of the family $\mathcal{F}$ through the point $q_0$:
$$ O_{q_0} = \{ e^{t_N f_N} \circ \dots \circ e^{t_1 f_1} (q_0) \mid t_i \in \mathbb{R}, \quad f_i \in \mathcal{F}, \quad N \in \mathbb{N}\}. $$
In attainable set we can move only forward along vector fields $f_i \in \mathcal{F}$, while in orbit the backward motion along $f_i$ is also possible, thus 
$$ A_{q_0} \subset O_{q_0}. $$
There hold the following non-trivial relations between attainable sets and orbits:
\begin{enumerate}
\item $O_{q_0}$ has a simpler structure than $A_{q_0}$,
\item $A_{q_0}$ has a reasonable structure inside $O_{q_0}$,
\end{enumerate}
we clarify these relations in the Orbit Theorem and in Krener's theorem. Before that we recall two important constructions.
\paragraph{Action of diffeomorphisms on tangent vectors and vector fields}
Let $M$, $N$ be smooth manifolds, $q \in M$, and let $v \in T_q M$ be a tangent vector. Let $F \colon M \to N$ be a smooth mapping. Then the action (push-forward) of the mapping $F$ on the vector $v$ is defined as follows. Let $\varphi \colon (- \varepsilon, \varepsilon) \to M$ be a smooth curve such that $\varphi (0) = q$, $\dot{\varphi}(0) = v$. Then the tangent vector $F_{*q} v \in T_{F(q)} N$ is defined as $F_{*q}v = \frac{d}{dt}|_{t=0} \quad F \circ \varphi (t).$

Now let $V \in \Vec (M)$ be a smooth vector field, and let $F \colon M \to N$ be a diffeomorphism. Then the vector field $F_{*}V \in \Vec(N)$ is defined by the equality
$$ F_{*}V|_{F(q)} = \frac{d}{dt}|_{t=0} \quad F \circ  e^{tV}(q) = F_{*q} V(q). $$
\paragraph{Immersed submanifolds}
\begin{definition}
A subset $W$ of a smooth manifold $M$ is called a $k$-dimensional immersed submanifold of $M$ if there exists a $k$-dimensional manifold $N$ and a smooth mapping $F \colon N \to M$ such that:
\begin{itemize}
\item $F$ is injective,
\item $\Ker F_{*q} = 0$ for any $q \in N$,
\item $W = F(N)$.
\end{itemize}
\end{definition}
\paragraph{Example 1: Figure 8}
Prove that the curve 
$$ \left\{ x = \sin 2 \varphi \cos \varphi, \quad y = \sin 2 \varphi \sin \varphi  \mid  \varphi \in \left(-\frac{\pi}{2}, \frac{\pi}{2}\right) \right\}$$
is a 1-dimensional immersed submanifold of the 2-dimensional plane.
\paragraph{Example 2: Irrational winding of torus}
Consider the two-dimensional torus $\T^2 = \mathbb{R}^2_{x,y} / \mathbb{Z}^2$, and consider a vector field on it with constant coefficients: $V = p \frac{\partial}{\partial x} + q \frac{\partial}{\partial y}\in \Vec(\T^2)$, $p^2 + q^2 \neq 0$. The orbit of the vector field $V$ through the origin $0 \in \T^2$ may have two different qualitative types:
\begin{itemize}
\item[(1)] $p/q\in\mathbb{Q}\cup \{\infty\}$. Then the orbit of $V$ is closed: $\cl~O_0 = O_0$.
\item[(2)] $p/q\in\mathbb{R} \backslash \mathbb{Q}$. Then the orbit is dense in the torus: $\cl~ O_0 = \T^2$. In this case the orbit $O_0$ is called the irrational winding of the torus.
\end{itemize}
So even for one vector field the
orbit may be an immersed submanifold, but not an embedded submanifold: the topology of the orbit induced by the inclusion $O_0 \subset \mathbb{R}^2$ is weaker than the topology of the orbit induced by the immersion
$$ t \mapsto e^{tV} (0), \quad \mathbb{R} \to O_0. $$

Now we can state the Orbit Theorem.

\begin{theorem}[Orbit Theorem, Nagano-Sussmann]\label{th:orbit}
Let $\mathcal{F} \subset \Vec~ M$, and let $q_0 \in M$. 
\begin{enumerate}
\item $O_{q_0}$ is a connected immersed submanifold of $M$.
\item For any $q \in O_{q_0}$
\begin{align*}
&T_q O_{q_0} = (\mathcal{P}_{*} \mathcal{F})(q) = \{ (P_* V)(q) \mid P \in G, \quad V \in \mathcal{F} \},\\
&G = \{ e^{t_N f_N} \circ \dots \circ e^{t_1 f_1} \mid t_i \in \mathbb{R}, \quad f_i \in \mathcal{F}, \quad N \in \mathbb{N} \}.
 \end{align*}
 \end{enumerate}
\end{theorem}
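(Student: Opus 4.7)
The plan is to construct the immersed submanifold structure on $O_{q_0}$ by producing local parametrizations from compositions of flows of pushed-forward vector fields, where the candidate tangent distribution is
$$\Pi_q := \spann\{(P_* V)(q) \mid P \in G, \ V \in \mathcal{F}\}, \qquad q \in O_{q_0}.$$
The first key input is the conjugation identity $P \circ e^{tV} \circ P^{-1} = e^{t P_* V}$, valid for any diffeomorphism $P$ and vector field $V$. Together with the definition of $G$ this yields $P_* \Pi_q = \Pi_{P(q)}$ for every $P \in G$, and since $G$ acts transitively on $O_{q_0}$ the integer $k := \dim \Pi_q$ is the same at every point of the orbit.

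Now I would fix $q \in O_{q_0}$, pick $Y_1, \ldots, Y_k \in \mathcal{P}_* \mathcal{F}$ with $Y_1(q), \ldots, Y_k(q)$ a basis of $\Pi_q$, and consider
$$\Phi(t_1, \ldots, t_k) = e^{t_k Y_k} \circ \cdots \circ e^{t_1 Y_1}(q).$$
The differential $d\Phi|_0$ sends $(s_1, \ldots, s_k)$ to $\sum s_i Y_i(q)$ and thus has rank $k$, so by the inverse function theorem $\Phi$ maps a neighborhood $U \subset \R^k$ of the origin diffeomorphically onto a $k$-dimensional embedded submanifold $S \subset M$. Writing $Y_i = (P_i)_* V_i$ turns each factor $e^{t_i Y_i} = P_i \circ e^{t_i V_i} \circ P_i^{-1}$ into an element of $G$, so $S \subset O_{q_0}$; translating $\Phi$ by arbitrary elements of $G$ yields a family of such parametrizations whose images cover $O_{q_0}$. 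Declaring them to be charts equips $O_{q_0}$ with the structure of a $k$-dimensional immersed submanifold of $M$, and connectivity is manifest since every point of $O_{q_0}$ is obtained from $q_0$ by a concatenation of integral curves of fields in $\mathcal{F}$.

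The main obstacle is the smooth compatibility of these charts on overlaps, i.e., showing that the change of parameters between two composition-of-flows parametrizations is smooth. I would handle this using the constant rank and $G$-invariance of $\Pi$ to show that at any overlap point the tangent space of each chart coincides with $\Pi$, and then invoking the implicit function theorem to express one set of flow parameters smoothly in terms of the other. The verification that the induced topology is Hausdorff and second countable requires a separate standard argument on equivalence classes of finite flow sequences. Once the submanifold structure is in place, part~(2) follows immediately: at any $q$, $T_q O_{q_0}$ equals the image of $d\Phi|_0$, which by construction is exactly $\Pi_q = (\mathcal{P}_* \mathcal{F})(q)$.
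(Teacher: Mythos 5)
The paper itself does not prove this theorem: it defers the proof to \cite{notes}, and your sketch is precisely the standard Nagano--Sussmann construction carried out there (charts built from compositions of flows of fields in $\mathcal{P}_*\mathcal{F}$, constancy of $\dim\Pi_q$ along the orbit via the conjugation identity $P\circ e^{tV}\circ P^{-1}=e^{tP_*V}$, and the strong topology generated by the chart images). The outline is sound; the two places you defer are where essentially all the work lies, so let me name them precisely. First, for both your overlap argument and for part (2) at points other than chart centers, you need that the tangent space to a chart image $S=\Phi(U)$ equals $\Pi_{q'}$ at \emph{every} $q'\in S$, not just at the center: this follows because $\partial\Phi/\partial t_i$ at a general parameter value is the push-forward of $Y_i=(P_i)_*V_i$ by the composition of the later flows, hence an element of $(\mathcal{P}_*\mathcal{F})(q')\subset\Pi_{q'}$, so $T_{q'}S\subset\Pi_{q'}$ and equality holds by the dimension count; only with this in hand does the implicit function theorem give smooth transition maps, and only then does $T_qO_{q_0}=\Pi_q$ hold along the whole orbit rather than at the points where you happened to center a chart. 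Second, second countability of the orbit topology is not automatic for a connected locally Euclidean Hausdorff space and does require the separate argument you allude to (in \cite{notes} it is extracted from the chart construction together with second countability of $M$). With those two points filled in, your proof is the proof the paper points to.
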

A proof of the Orbit Theorem is given in \cite{notes}. Below we prove several its important corollaries.
\begin{corollary}
For any $q_0 \in M$ and $q \in O_{q_0}$
\begin{align}
\label{LieqF}
\Lie_q (\mathcal{F}) \subset T_q O_{q_0},
\end{align}
where
$$ \Lie_q (\mathcal{F}) = \spann \{ [f_N, [\dots, [f_2, f_1] \dots]](q) \mid f_i \in \mathcal{F}, N \in \mathbb{N} \} \subset T_q M. $$
\end{corollary}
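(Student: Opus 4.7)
My plan is to proceed by induction on the length $N$ of an iterated bracket $B = [f_N, [\ldots, [f_2, f_1]\ldots]]$, showing that $B$ is tangent to the orbit $O_{q_0}$ at every point of the orbit. Since $T_q O_{q_0}$ is a linear subspace of $T_q M$, the required inclusion \eqref{LieqF} will follow by taking spans.

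For the base case $N = 1$, any $f \in \mathcal{F}$ satisfies $f(q) = (\Id_* f)(q) \in (\mathcal{P}_* \mathcal{F})(q) = T_q O_{q_0}$ by part 2 of the Orbit Theorem applied with $P = \Id \in G$. This holds at every $q' \in O_{q_0}$, because orbits coincide: $O_{q'} = O_{q_0}$ whenever $q' \in O_{q_0}$ (any element of $G$ can be prepended or appended to the product defining $q'$). Thus every $f \in \mathcal{F}$ restricts to a smooth vector field on the immersed submanifold $O_{q_0}$.

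For the inductive step I would invoke the standard differential-geometric fact: if $V, W \in \Vec(M)$ are everywhere tangent to an immersed submanifold $N \subset M$, then $[V, W]$ is tangent to $N$ as well, and moreover $[V, W]|_N = [V|_N, W|_N]$ as vector fields on $N$. Applied inductively to $f_1, \ldots, f_N \in \mathcal{F}$ using the base case, the iterated bracket $B$ is tangent to $O_{q_0}$ throughout the orbit, and in particular $B(q) \in T_q O_{q_0}$, which closes the induction.

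The step I expect to be the most delicate is the cited compatibility of Lie bracket with restriction, because for immersed submanifolds the intrinsic topology of $O_{q_0}$ may be strictly finer than the one induced from $M$ (cf.\ the irrational winding of the torus mentioned in the preceding text). The statement is nevertheless purely local in the intrinsic topology: around each point of $O_{q_0}$ the immersion restricts to a smooth embedding onto a slice in $M$, and on such a slice the coordinate formula $[V, W] = (\partial W / \partial x) V - (\partial V / \partial x) W$ from the preceding exercise yields both the smoothness of $V|_N, W|_N$ as intrinsic vector fields and the identity $[V, W]|_N = [V|_N, W|_N]$.
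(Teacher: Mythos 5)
Your proof is correct, but it takes a genuinely different route from the paper. The paper's argument is elementary and self-contained: for each $f\in\mathcal F$ the flow curve $e^{tf}(q)$ stays in $O_{q_0}$, giving $f(q)\in T_qO_{q_0}$, and then the explicit commutator curve $e^{-tf_2}\circ e^{-tf_1}\circ e^{tf_2}\circ e^{tf_1}(q)$, reparameterized by $t\mapsto\sqrt{t}$, yields $[f_1,f_2](q)\in T_qO_{q_0}$; iterating gives all higher brackets. It needs only part~1 of the Orbit Theorem (that $O_{q_0}$ is an immersed submanifold). You instead handle the base case via part~2 of the Orbit Theorem (overkill, though valid, since $\Id\in G$), and then close the induction by invoking the general lemma that the Lie bracket of two vector fields tangent to an immersed submanifold is again tangent to it, together with $[V,W]|_N=[V|_N,W|_N]$. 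That lemma is not stated or proved in these notes, so within the paper's framework it is an imported ingredient that would itself need a local-slice argument of the kind you sketch at the end. Both proofs are sound; yours is more structural and cleanly separates "tangency is preserved under brackets" as a reusable fact, while the paper's is more hands-on, producing each bracket directly as a velocity of an explicit curve in the orbit, which is also the device reused later in the notes (e.g.\ in the proof of necessity in the Frobenius theorem).
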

\begin{proof}
Let $q_0 \in M$, $q \in O_{q_0}$. Take any $f \in \mathcal{F}$. Then $\varphi (t) = e^{t f}(q) \in O_{q_0}$, thus 
$$ \dot{\varphi} (0) = f(q) \in T_q O_{q_0}. $$
It follows that $\mathcal{F}(q) \subset T_q O_{q_0}$.

Further, take any $f_1, f_2 \in \mathcal{F}$, then $\varphi(t) = e^{-t f_2} \circ e^{-t f_1} \circ e^{t f_2} \circ e^{t f_1} (q) \in O_{q_0}$. Thus
$$ \frac{d}{dt}|_{t=0}~\varphi(\sqrt{t}) = [f_1, f_2] (q) \in T_q O_{q_0}. $$
It follows that $[\mathcal{F}, \mathcal{F}] (q) \subset T_q O_{q_0}$.

We prove similarly that $[[\mathcal{F}, \mathcal{F}], \mathcal{F}] (q) \subset T_q O_{q_0}$, and by induction that $\Lie_q (\mathcal{F}) \subset T_q O_{q_0}$.
\end{proof}
In the analytic case inclusion \eqref{LieqF} turns into equality.
\begin{prop}
Let $M, \mathcal{F}$ be real-analytic. Then for any $q_0 \in M$ and $q \in O_{q_0}$
$$ \Lie_q (\mathcal{F}) = T_q O_{q_0}. $$
\end{prop}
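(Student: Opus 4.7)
The previous Corollary already gives $\Lie_q(\mathcal{F})\subseteq T_q O_{q_0}$, so my task is only the reverse inclusion, and real-analyticity must enter precisely there. By the Orbit Theorem, any vector in $T_q O_{q_0}$ is of the form $(P_*V)(q)$ for some $P=e^{t_Nf_N}\circ\cdots\circ e^{t_1f_1}\in G$ and $V\in\mathcal{F}$; writing $\tilde q=P^{-1}(q)$ one has $(P_*V)(q)=P_{*\tilde q}V(\tilde q)$ with $V(\tilde q)\in\mathcal{F}(\tilde q)\subseteq\Lie_{\tilde q}(\mathcal{F})$. So the plan is to show that every such $P$ preserves the $\Lie$ distribution in the sense that $P_{*q'}\Lie_{q'}(\mathcal{F})\subseteq\Lie_{P(q')}(\mathcal{F})$ for every $q'\in M$. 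Peeling off one flow at a time, this reduces to a single-flow claim: for $f\in\mathcal{F}$, $g$ in the Lie subalgebra $\Lie(\mathcal{F})\subset\Vec(M)$ generated by $\mathcal{F}$, $t\in\mathbb{R}$, and $q\in M$,
\[
(e^{tf})_{*q}\,g(q)\in \Lie_{e^{tf}(q)}(\mathcal{F}).
\]

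The engine will be the Lie series
\[
(e^{tf})_{*q}\,g(q)\;=\;\sum_{k\ge 0}\frac{(-t)^k}{k!}\,\bigl((\ad f)^k g\bigr)\bigl(e^{tf}(q)\bigr),
\]
which I would derive by iterating the operator ODE $\tfrac{d}{dt}(e^{tf})_*g=-(e^{tf})_*[f,g]$ at $t=0$. In the real-analytic category this Taylor series converges to the left-hand side for $|t|$ below some positive radius $\varepsilon(q,f,g)$. Since $g\in\Lie(\mathcal{F})$ forces every $(\ad f)^k g$ to stay in $\Lie(\mathcal{F})$, each summand lies in the finite-dimensional, hence closed, subspace $\Lie_{e^{tf}(q)}(\mathcal{F})\subset T_{e^{tf}(q)}M$, and so does the sum. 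To remove the restriction on $|t|$, I would subdivide $[0,t]$ into $N$ equal pieces, taking $N$ so large that $t/N$ falls below the infimum of the radii of convergence along the compact flow arc $\{e^{sf}(q):0\le s\le t\}$ (positive by compactness plus analyticity), and propagate the inclusion step by step: at $q_k=e^{k(t/N)f}(q)$ the current vector lies in $\Lie_{q_k}(\mathcal{F})$ and is therefore realized as $g_k(q_k)$ for some $g_k\in\Lie(\mathcal{F})$, so the single-flow claim applied at $q_k$ to $(f,g_k)$ places the next vector in $\Lie_{q_{k+1}}(\mathcal{F})$.

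The main obstacle, and the unique spot where real-analyticity is genuinely indispensable, is this Lie series identity: in the merely $C^\infty$ category the series can fail to converge, or to represent the push-forward, and the conclusion of the proposition is then known to be false (the inclusion $\Lie_q(\mathcal{F})\subseteq T_q O_{q_0}$ may be strict). Once the series identity is secured, everything else is routine bookkeeping: closedness of finite-dimensional subspaces, compatibility of push-forward with Lie brackets, and iteration of the single-flow claim over the $N$ factors of $P$ to conclude $(P_*V)(q)\in\Lie_q(\mathcal{F})$, hence $T_q O_{q_0}\subseteq\Lie_q(\mathcal{F})$.
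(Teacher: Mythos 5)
The paper gives no proof of this proposition---it simply defers to the reference \cite{notes}---so there is nothing internal to compare against. Your argument is correct and is in fact the standard Nagano-style proof contained in that reference: the reverse inclusion $T_q O_{q_0}\subseteq\Lie_q(\mathcal F)$ is reduced, via the description $T_q O_{q_0}=\{(P_*V)(q):P\in G,\ V\in\mathcal F\}$ from the Orbit Theorem, to the invariance $P_{*q'}\Lie_{q'}(\mathcal F)\subseteq\Lie_{P(q')}(\mathcal F)$ for $P\in G$; this peels down to a single flow factor, settled by the Lie series $(e^{tf})_*g=\sum_{k\ge0}\tfrac{(-t)^k}{k!}(\ad f)^k g$, which converges in the real-analytic category for small $|t|$ and has every term in the closed finite-dimensional subspace $\Lie_{\cdot}(\mathcal F)$; subdividing along the compact flow arc handles arbitrary $t$. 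Your localization of where analyticity is used, and of the failure in the merely smooth case, also matches the standard treatment.
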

This proposition is proved in \cite{notes}.
But in a smooth non-analytic case inclusion \eqref{LieqF} may become strict.
\paragraph{Example: Orbit of non-analytic system}
Let $M = \mathbb{R}^2_{x,y}$, $\mathcal{F} = \{ f_1, f_2 \}$, $f_1 = \frac{\partial}{\partial x}$, $f_2 =  a(x)\frac{\partial}{\partial y}$, where $a \in C^\infty (\mathbb{R})$, $a(x) = 0$ for $x \le 0$, $a(x) > 0$ for $x > 0$.

It is easy to see that $O_q = \mathbb{R}^2$ for any $q \in \mathbb{R}^2$. Although, for $x \le 0$ we have 
$$ \Lie_q (\mathcal{F}) = \spann (f_1 (q)) \ne T_q O_q. $$

\subsection{Frobenius theorem}

A distribution on a smooth manifold $M$ is a smooth mapping:
$$ \Delta \colon q \mapsto \Delta_q \subset T_q M, \quad q \in M, $$
where the subspaces $\Delta_q$ have the same dimension called the rank of $\Delta$. 

An immersed submanifold $N \subset M$ is called an integral manifold of $\Delta$ if
$$ \forall q \in N \quad T_q N = \Delta_q. $$
A distribution $\Delta$ on $M$ is called integrable if for any point $q \in M$ there exists an integral manifold $N_q \ni q.$

Denote by 
$$ \bar{\Delta} = \{ f \in \Vec(M) \mid  f(q) \in \Delta_q \quad \forall q \in M \} $$
the set of vector fields tangent to $\Delta$. 

A distribution $\Delta$ is called holonomic if $[\bar{\Delta}, \bar{\Delta}] \subset \bar{\Delta}$.

\begin{theorem}[Frobenius]
A distribution is integrable iff it is holonomic.
\end{theorem}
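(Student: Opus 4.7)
The easy direction is integrability $\Rightarrow$ holonomy. I would fix $q\in M$, pick an integral manifold $N\ni q$ (of dimension $k=\rank\Delta$), and take any $X,Y\in\bar{\Delta}$. Because $X(p),Y(p)\in\Delta_p=T_pN$ for every $p\in N$, the fields $X,Y$ restrict to smooth vector fields $X|_N,Y|_N$ on $N$, and the Lie bracket computed on $N$ agrees with the Lie bracket on $M$ restricted along $N$ (this is immediate from the local-coordinate formula, since an immersion is locally an embedding). Therefore $[X,Y](q)=[X|_N,Y|_N](q)\in T_qN=\Delta_q$, giving $[\bar{\Delta},\bar{\Delta}]\subset\bar{\Delta}$.

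The hard direction (holonomy $\Rightarrow$ integrability) is where I would use the Orbit theorem applied to the family $\mathcal{F}=\bar{\Delta}$. Theorem~\ref{th:orbit} yields, for each $q_0\in M$, a connected immersed submanifold $O_{q_0}$ with
$$T_qO_{q_0}=\Span\{(P_*V)(q)\mid P\in G,\ V\in\bar{\Delta}\}.$$
Since $\Delta$ admits a local frame of vector fields in $\bar{\Delta}$, taking $P=\Id$ gives $\Delta_q\subset T_qO_{q_0}$. The task is therefore to prove the reverse inclusion, i.e.\ that every $(P_*V)(q)$ lies in $\Delta_q$. Because $G$ is generated by flows $e^{tV}$ with $V\in\bar{\Delta}$, it suffices to show that for any $V\in\bar{\Delta}$ the flow $e^{tV}$ carries $\Delta$ into itself: $(e^{tV})_*\Delta_q=\Delta_{e^{tV}(q)}$.

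I would prove this flow invariance by the standard ODE argument. Fix $q$ and a local frame $W_1,\dots,W_k\in\bar{\Delta}$ of $\Delta$ near the trajectory $e^{tV}(q)$; by involutivity, $[V,W_i]=\sum_j c_{ij}W_j$ for smooth functions $c_{ij}$. Define
$$\widetilde W_i(t):=(e^{-tV})_{*,e^{tV}(q)}\,W_i(e^{tV}(q))\ \in\ T_qM.$$
Using the Lie-derivative identity $\frac{d}{dt}(e^{-tV})_*W=(e^{-tV})_*[V,W]$, I get the linear ODE
$$\dot{\widetilde W}_i(t)=\sum_{j}c_{ij}(e^{tV}(q))\,\widetilde W_j(t),\qquad \widetilde W_i(0)=W_i(q)\in\Delta_q.$$
The subspace $\Delta_q^{\,k}\subset(T_qM)^k$ is invariant under this flow, so by uniqueness $\widetilde W_i(t)\in\Delta_q$ for all $t$. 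This says $(e^{-tV})_*\Delta_{e^{tV}(q)}\subset\Delta_q$, and equality of dimensions upgrades this to $(e^{tV})_*\Delta_q=\Delta_{e^{tV}(q)}$. Iterating along the composition defining any $P\in G$, I conclude $(P_*V)(q)\in\Delta_q$ for every $V\in\bar{\Delta}$, hence $T_qO_{q_0}=\Delta_q$ and $O_{q_0}$ is the sought integral manifold through $q_0$.

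The main obstacle is the flow-invariance step: turning the algebraic condition $[V,W]\in\bar{\Delta}$ into a pointwise statement about $(e^{tV})_*$. The clean way, as above, is to express involutivity in a local frame so that the transported vectors satisfy a linear ODE whose coefficient matrix keeps them inside the fixed subspace $\Delta_q$; everything else is bookkeeping around the Orbit theorem.
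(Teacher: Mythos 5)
Your proof is correct, and on the sufficiency direction it takes a genuinely different (and stronger) route than the paper. For necessity you restrict $X,Y\in\bar{\Delta}$ to the integral manifold $N$ and use that the Lie bracket is intrinsic; the paper instead runs the curve $\varphi(t)=e^{-tg}\circ e^{-tf}\circ e^{tg}\circ e^{tf}(q)$ inside $N_q$ and extracts $[f,g](q)$ from $\tfrac{d}{dt}\big|_{t=0}\varphi(\sqrt{t})$. These are two packagings of the same underlying fact (fields in $\bar{\Delta}$ restrict to $N$, so their flows preserve $N$), and both are fine. For sufficiency, however, the paper explicitly restricts to the real-analytic case and invokes the Proposition that $\Lie_q\bar{\Delta}=T_qO_q$ there (quoted without proof), obtaining $T_{q'}O_q=\Lie_{q'}\bar{\Delta}=\Delta_{q'}$. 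You instead prove the flow-invariance lemma $(e^{tV})_*\Delta_q=\Delta_{e^{tV}(q)}$ for $V\in\bar{\Delta}$ by the linear ODE / Lie-derivative argument in a local frame, then feed it into the Orbit Theorem's tangent-space formula $T_qO_{q_0}=\Span\{(P_*V)(q)\}$ to get $T_qO_{q_0}=\Delta_q$. This is the classical proof of Frobenius and it works for general $C^\infty$ distributions, not just analytic ones, so your argument is both self-contained (modulo the Orbit Theorem itself) and more general than what the paper supplies.
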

\begin{proof}
Necessity. Take any $f,  g \in \bar{\Delta}$. Let $q \in M$, and let $N_q \ni q$ be the integral manifold of $\Delta$ through $q$. Then
$$ \varphi(t)= e^{-tg} \circ e^{-tf}\circ e^{tg} \circ e^{tf} (q) \in N_q, $$
thus 
$$ \frac{d}{dt}\left|_{t=0} \varphi(\sqrt{t}) = [f, g](q) \in T_q N_q = \Delta_q.\right. $$
So $ [f, g] \in \bar{\Delta}$, and the inclusion $[\bar{\Delta}, \bar{\Delta}] \subset \bar{\Delta}$ follows.

Sufficiency. We consider only the analytic case. We have $[\bar{\Delta}, \bar{\Delta}] \subset \bar{\Delta}, [[\bar{\Delta}, \bar{\Delta}], \bar{\Delta}] \subset [\bar{\Delta}, \bar{\Delta}] \subset \bar{\Delta}$, and inductively $\Lie_q (\bar{\Delta}) \subset \bar{\Delta}_q = \Delta_q.$ The reverse inclusion is obvious, thus $\Lie_q (\bar{\Delta}) = \Delta_q$, $q \in M$. Denote $N_q = O_q (\bar{\Delta})$ and prove that $N_q$ is an integral manifold of $\Delta$:
$$ T_{q'} N_q = T_{q'} (O_q (\bar{\Delta})) = \Lie_{q'} (\bar{\Delta}) = \Delta_{q'}, \quad q' \in N_q. $$
So $N_q \ni q$ is the integral manifold of $\Delta$, and $\Delta$ is integrable.
\end{proof}
Consider a local frame of $\Delta$:
$$ \Delta_q = \spann (f_1 (q), \dots, f_k (q)), \quad q \in S \subset M, \quad f_1 (q), \dots, f_k (q) \in \Vec(S), $$
where $S$ is an open subset of $M$. Then the inclusion $[\bar{\Delta}, \bar{\Delta}] \subset \bar{\Delta}$ takes the form
$$ [f_i, f_j] (q) = \sum^k_{e=1} c^l_{ij}(q) f_l(q), \qquad q \in S, \quad c_{ij}^l \in C^{\infty}(S).$$
This equality is called Frobenius condition.
\subsection{Rashevsky-Chow theorem}
A system $\mathcal{F} \subset \Vec(M)$ is called completely nonholonomic (full-rank, bracket-generating) if $\Lie_q (\mathcal{F}) = T_q M$ for any $q \in M$.
\begin{theorem}[Rashevsky-Chow]
If $\mathcal{F} \subset \Vec(M)$  is completely nonholonomic and $M$ is connected, then $O_q = M$ for any $q \in M$.
\end{theorem}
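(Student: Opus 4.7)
The plan is to feed the completely nonholonomic hypothesis into the Orbit Theorem and then exploit connectedness of $M$ to upgrade ``the orbit is open'' to ``the orbit is all of $M$''.

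First I would fix $q_0 \in M$ and invoke the Orbit Theorem to view $O_{q_0}$ as a connected immersed submanifold of $M$ with tangent space $T_q O_{q_0} = (\mathcal{P}_*\mathcal{F})(q)$ at every point $q \in O_{q_0}$. Then I would apply the Corollary to the Orbit Theorem, which gives the inclusion $\Lie_q(\mathcal{F}) \subset T_q O_{q_0}$. Under the completely nonholonomic hypothesis $\Lie_q(\mathcal{F}) = T_q M$, so combining with the trivial inclusion $T_q O_{q_0} \subset T_q M$ yields $T_q O_{q_0} = T_q M$ and hence $\dim O_{q_0} = \dim M$ at every point $q \in O_{q_0}$. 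Note that this step is where one crucially does not need analyticity: only one direction of the inclusion between the Lie span and the tangent space to the orbit is used, and that direction holds in the smooth category.

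Next I would argue that a $(\dim M)$-dimensional immersed submanifold is automatically open in $M$: the defining immersion $F \colon N \to M$ is a local diffeomorphism by the inverse function theorem, so $O_{q_0} = F(N)$ is open in $M$.

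The final step, which is really the heart of the argument, is a connectedness play. The orbits $\{O_q\}_{q \in M}$ are the equivalence classes of the relation $q \sim q'$ iff $q' = e^{t_N f_N} \circ \cdots \circ e^{t_1 f_1}(q)$ for some $t_i \in \mathbb{R}$ and $f_i \in \mathcal{F}$ (symmetry of this relation is what distinguishes orbits from attainable sets and is exactly why we work with orbits here). Hence the orbits partition $M$ into pairwise disjoint subsets, and by the previous paragraph each of them is open in $M$. The complement $M \setminus O_{q_0}$, being a union of orbits, is therefore also open, so $O_{q_0}$ is both open and closed in $M$. Since $M$ is connected and $O_{q_0}$ is nonempty (it contains $q_0$), we conclude $O_{q_0} = M$. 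The only place that could trip one up is cleanly justifying that the partition into orbits consists of open sets of $M$ rather than merely of the immersed submanifold topology, but the dimension count above dispels exactly this worry.
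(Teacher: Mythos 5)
Your proposal is correct and follows essentially the same route as the paper: use the Orbit Theorem and its Corollary to get $T_q O_{q_0} \supset \Lie_q(\mathcal{F}) = T_q M$, conclude the orbit is open, then observe that orbits partition $M$ so each orbit is also closed, and invoke connectedness. The paper is terser but the skeleton is identical; your added remarks (on why full dimension gives openness via the inverse function theorem, and on why the non-analytic direction of the Corollary suffices) are sound amplifications of the same argument.
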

\begin{proof}
Take any $q \in M$ and any $q_1 \in O_q$. We have
$T_{q_1} O_q \supset \Lie_{q_1} (\mathcal{F}) = T_{q_1} M$, thus $\dim O_q = \dim M$, i.e., $O_q$ is open in $M$.

On the other hand, any orbit is closed as a complement to the union of all other orbits.

Thus any orbit is a connected component of $M$. Since $M$ is connected, each orbit coincides with $M$.
\end{proof}
\subsection{Attainable sets of full-rank systems}
Let $\mathcal{F} \subset \Vec(M)$ be a full-rank system. The assumption of full rank is not very restrictive in the analytic case: if it is violated, we can consider the restriction of $\mathcal{F}$ to its orbit, and this restriction is full-rank.

What is the possible structure of attainable sets of $\mathcal{F}$? It is easy to construct systems in the two-dimensional plane that have the following attainable sets:
\begin{itemize}
\item smooth full-dimensional manifold without boundary,
\item smooth full-dimensional manifold with smooth boundary,
\item smooth full-dimensional manifold with non-smooth boundary, with corner or cusp singularity.
\end{itemize}
But it is impossible to construct attainable set that is:
\begin{itemize}
\item a lower-dimensional submanifold,
\item a set whose boundary points are isolated from its interior points. These possibilities are forbidden respectively by items (1) and (2) of the following theorem.
\end{itemize}
\begin{theorem}[Krener]
Let $\mathcal{F} \subset \Vec(M)$, and let $\Lie_q \mathcal{F} = T_q M$ for any $q \in M$. Then:
\begin{enumerate}
\item[$(1)$] $\intt A_q \ne \varnothing$ for any $q \in M$,
\item[$(2)$] $\cl (\intt A_q) \supset A_q$ for any $q \in M$.
\end{enumerate}
\end{theorem}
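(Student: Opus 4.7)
The plan is to prove (1) by induction on $k$, constructing at stage $k$ vector fields $f_1, \dots, f_k \in \mathcal{F}$ and strictly positive times $\bar{t}_1, \dots, \bar{t}_k$ such that the composition map
\[
\Phi_k(t_1, \dots, t_k) = e^{t_k f_k} \circ \cdots \circ e^{t_1 f_1}(q)
\]
has rank $k$ at $\bar{t}^{(k)} = (\bar{t}_1, \dots, \bar{t}_k)$. Once $k = n = \dim M$ is reached, the inverse function theorem produces a neighborhood of $\Phi_n(\bar{t}^{(n)})$ inside $M$, and keeping the perturbation of $\bar{t}^{(n)}$ in the positive orthant places this whole neighborhood inside $A_q$. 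The base case $k = 1$ relies on the coordinate formula $[V,W] = (\partial W/\partial x)V - (\partial V/\partial x)W$: if every $f \in \mathcal{F}$ vanished at $q$, every iterated Lie bracket would too, contradicting $\Lie_q \mathcal{F} = T_q M$; so some $f_1$ is nonzero at $q$ and any small $\bar{t}_1 > 0$ gives a rank-$1$ map.

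For the inductive step ($k < n$), I would shrink the parameter domain so that $\Phi_k$ has constant rank $k$ and its image $N_k$ is a $k$-dimensional immersed submanifold of $M$. The key claim is: there exist $f \in \mathcal{F}$ and $t' \in U_k$ with $f(\Phi_k(t')) \notin T_{\Phi_k(t')} N_k$. Suppose not. Then every $f \in \mathcal{F}$ restricts to a vector field tangent to $N_k$, its flow preserves $N_k$ locally, and so compositions of $\mathcal{F}$-flows starting at $\bar{q}_k = \Phi_k(\bar{t}^{(k)})$ stay in $N_k$, forcing $O_{\bar{q}_k} \subset N_k$. But by the orbit theorem and its corollary, $T_{\bar{q}_k} O_{\bar{q}_k} \supset \Lie_{\bar{q}_k} \mathcal{F} = T_{\bar{q}_k} M$, hence $\dim O_{\bar{q}_k} = n > k = \dim N_k$, a contradiction. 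Granted such an $f$, the augmented map $\Phi_{k+1}(t, s) = e^{sf} \circ \Phi_k(t)$ has differential at $(t', 0)$ whose image is $T_{\Phi_k(t')} N_k + \R \cdot f(\Phi_k(t'))$, of dimension $k+1$; by openness of the maximal-rank condition, I can move to nearby $(\bar{t}^{(k+1)}, \bar{s})$ with all coordinates positive where the rank is still $k+1$.

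For (2), fix $q_1 \in A_q$, write $q_1 = e^{t_N f_N} \circ \cdots \circ e^{t_1 f_1}(q)$, and rerun the construction from (1) with base point $q_1$ to obtain $\Psi(s_1, \dots, s_n) = e^{s_n g_n} \circ \cdots \circ e^{s_1 g_1}(q_1)$ of full rank at some $\bar{s}$ with positive entries. The openness step in the induction shows that each new time can be chosen arbitrarily small, so $|\bar{s}|$ can be made as small as desired. The inverse function theorem then sends a neighborhood of $\bar{s}$ in the positive orthant diffeomorphically onto a neighborhood of $\Psi(\bar{s})$ in $M$; prepending the fixed flows that carry $q$ to $q_1$ shows this neighborhood lies in $A_q$, so $\Psi(\bar{s}) \in \intt A_q$. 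Continuity gives $\Psi(\bar{s}) \to \Psi(0) = q_1$ as $\bar{s} \to 0$, whence $q_1 \in \cl(\intt A_q)$.

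The main obstacle I anticipate is the non-extendibility dichotomy in the inductive step for (1): carefully justifying that tangential closure of $N_k$ under $\mathcal{F}$ really propagates the entire orbit $O_{\bar{q}_k}$ into the immersed (and possibly non-embedded) submanifold $N_k$, and then cleanly invoking the orbit-theorem corollary to derive the dimension contradiction. Everything else is standard inverse-function-theorem bookkeeping plus the openness of the maximal rank condition.
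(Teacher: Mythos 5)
Your construction is essentially the same as the paper's: build $k$-dimensional pieces $N_k$ inside $A_q$ by iteratively adding a flow in a direction transverse to the previous piece, and invoke the inverse function theorem at stage $k=n$. The paper proves item (2) directly (it implies (1)), sweeping out $N_k$ inside an arbitrary neighborhood $W(q_1)$ of a given $q_1\in A_q$, whereas you prove (1) first and recover (2) by re-running the construction from $q_1$ with small times; these are cosmetic variants of the same argument.

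The genuine gap is exactly the one you flagged: your justification of the inductive step claims that if every $f\in\mathcal{F}$ is tangent to $N_k$ at every point of $N_k$, then ``compositions of $\mathcal{F}$-flows starting at $\bar q_k$ stay in $N_k$, forcing $O_{\bar q_k}\subset N_k$.'' This inclusion is false. $N_k$ is a small relatively open piece (image of a small parameter box), not an invariant set: a flow that is tangent to $N_k$ will still leave $N_k$ for moderate time, and the orbit $O_{\bar q_k}$ uses flows for all real times and so escapes $N_k$ immediately. You cannot conclude $\dim O_{\bar q_k}\le k$ this way. The correct (and shorter) route, which is what the paper's terse ``Otherwise $\dim\mathcal{F}(q_2)=\dim\Lie_{q_2}(\mathcal{F})=1$'' is gesturing at, is \emph{infinitesimal}: if every $f\in\mathcal{F}$ is tangent to $N_k$ on all of $N_k$, then the Lie bracket of two such fields is again tangent to $N_k$ (a standard fact about tangent vector fields on an immersed submanifold), and by induction every iterated bracket is tangent to $N_k$. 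Hence $\Lie_{\bar q_k}(\mathcal{F})\subset T_{\bar q_k}N_k$, which has dimension $k<n$, contradicting the full-rank hypothesis $\Lie_{\bar q_k}(\mathcal{F})=T_{\bar q_k}M$. Replacing the orbit-containment claim by this Lie-bracket argument closes the gap; the rest of your write-up (rank openness, positive orthant, small-time limit for part (2)) is fine.
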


\begin{proof}
Since item (2) implies item (1), we prove item (2).

We argue by induction on dimension of $M$. If $\dim M = 0$, there is nothing to prove. Let $\dim M > 0$.

Take any $q_1 \in A_q$, and fix any neighborhood $q_1 \in W(q_1) \subset M$. We show that $\intt A_q \cap W(q_1) \ne \varnothing$.
There exists $f_1 \in \mathcal{F}$ such that $f_1 (q_1) \ne 0,$ otherwise $\mathcal{F}(q_1) = \{0\} = \Lie_{q_1} (\mathcal{F}) = T_{q_1} M$, a contradiction. Consider the following set for small $\varepsilon_1 > 0$:
$$ N_1 = \{ e^{t_1f_1} (q_1) \mid 0 < t_1 < \varepsilon_1 \} \subset W(q_1) \cap A_q.$$
$N_1$ is a smooth 1-dimensional manifold. If $\dim M = 1$, then $N_1$ is open, thus $N_1 \subset \intt A_q$, so $\intt A_q \cap W(q_1) \ne \varnothing$. Since the neighborhood $W(q_1)$ is arbitrary, $q_1 \in \cl (\intt A_q)$.

Let $\dim M > 1$. There exist $q_2 = e^{t_1^1f_1}(q_1) \in N_1 \cap W(q_1)$ and $f_2 \in \mathcal{F}$ such that $f_2 (q_2) \not\in T_{q_2} N_1$. Otherwise $\dim \mathcal{F}(q_2) = \dim \Lie_{q_2} (\mathcal{F}) = T_{q_2} M = 1$ for any $q_2 \in N_2 \cap W$, and $\dim M = 1$. Consider the following set for small $\varepsilon_2$:
$$ N_2 = \{ e^{t_2 f_2} \circ e^{t_1 f_1} (q_2)  \mid t_1^1 < t_1 < t_1^1 + \varepsilon_2, \  0 < t_2 < \varepsilon_2 \} \subset W(q_1) \cap A_q. $$
$N_2$ is a smooth 2-dimensional manifold.
If $\dim M = 2$, then $N_2$ is open, thus $N_2 \subset \intt A_q \cap W(q_1) \ne \varnothing$ and $q_1 \in \cl (\intt A_q)$. 

If $\dim M > 2$, we proceed by induction.
\end{proof}
\subsection{Exercises}
\begin{enumerate}
\item For the system modeling stopping of a train, prove that $O_{x_0} = \mathbb{R}^2$ and $A_{x_0} = \mathbb{R}^2$ for any $x_0 \in \mathbb{R}^2$.
\item For the Markov-Dubins car, prove that:
\begin{itemize}
\item $O_{q_0} =  \mathbb{R}^2 \times S^1$ for any $q_0 \in \mathbb{R}^2 \times S^1$,
\item $A_{q_0} =  \mathbb{R}^2 \times S^1$ for any $q_0 \in \mathbb{R}^2 \times S^1$ (hint: use periodicity of the vector fields $X_0 \ne X_1$, $X_0 = \cos \theta \frac{\partial}{\partial x} + \sin \theta \frac{\partial}{\partial y}$, $X_1 =  \frac{\partial}{\partial \theta}$).
\end{itemize}
\end{enumerate}
\section{Optimal control problems}
\subsection{Problem statement}
We consider the following optimal control problem:
\begin{align}
&\dot{q} = f(q, u), \quad q \in M, \quad u \in U \subset \mathbb{R}^m, \label{p21}\\
&q(0) = q_0, \quad q(t_1) = q_1, \label{p22}\\
&J = \int^{t_1}_0 \varphi(q, u) dt \to \min, \label{p23}\\
&t_1 \textrm{ fixed or free.}\nonumber
\end{align}
The following assumptions are supposed for dynamics $f(q, u)$:
\begin{itemize}
\item $q \mapsto f(q, u)$ smooth for any $u \in U,$
\item $(q, u) \mapsto f(q, u)$ continuous for any $q \in M$, $u \in \cl(U)$,
\item $(q, u) \mapsto \frac{\partial f}{\partial q} (q, u)$ continuous for any $q \in M$, $u \in \cl(U)$.
\end{itemize}
The same assumptions are supposed for the function $\varphi(q, u)$ that determines the cost functional~$J$.

Admissible control is $u \in L^{\infty} ([0, t_1], U)$.

\subsection{Existence of optimal controls}
\begin{theorem}[Filippov]
Let $U \subset \mathbb{R}^m$ be compact.

Suppose that the set $\{ (f(q, u), \varphi(q, u)) \mid u \in U \}$ is convex for any $q \in M$.

Suppose that there exists a compact $K \subset M$ such that $f(q, u) = 0$, $\varphi (q, u) = 0$ for any $u \in U$, $q \in M \backslash K$.

Then optimal control exists for any
$q_0 \in M$ and any $q_1 \in A_{q_0} (t_1)$ for the optimal control problem \eqref{p21}--\eqref{p23} with fixed~$t_1$. 
\end{theorem}

\begin{remark}
Suppose that there exists an apriori bound $A_{q_0} (t_1) \subset B$, where $B \subset M$ is a compact. Take a compact $K \supset \intt K \supset B$ and a function $g \in C^{\infty} (M)$ such that $g|_B \equiv 1,~g|_{M\backslash K} \equiv 0$. Consider a new problem 
\begin{align*}
&\dot{q} =\tilde{f}(q, u) = f(q, u) \cdot g(q),\\
&\tilde{J} = \int^{t_1}_0 \tilde{\varphi} (q, u) dt \to \min,\quad \tilde{\varphi} (q, u) = \varphi (q, u) \cdot g(q).
\end{align*}
Then the new problem satisfies the third condition of Filippov theorem and has the same solution as the initial problem. Thus, when applying Filippov theorem, we can replace its third condition by an apriori estimate of attainable set.
\end{remark}

Now consider a time-optimal problem
\begin{align}
&\dot{q} = f(q, u), \quad q \in M, \quad u \in U \subset \mathbb{R}^m, \label{pt21}\\
&q(0) = q_0, \quad q(t_1) = q_1, \label{pt22}\\
& t_1  \to \min. \label{pt23}
\end{align}

\begin{theorem}[Filippov]
Let $U \subset \mathbb{R}^m$ be compact.

Suppose that the set $\{ f(q, u) \mid u \in U \}$ is convex for any $q \in M$.

Suppose that there exists a compact $K \subset M$ such that $f(q, u) = 0$  for any $u \in U$, $q \in M \backslash K$.

Then optimal control exists for any $q_0 \in M$ and any 
  $q_1 \in A_{q_0}$ for the time-optimal problem \eqref{pt21}--\eqref{pt23}. 
\end{theorem}

\begin{remark}
If $\varphi(q,u) \geq C > 0$ for all $q \in M$, $u \in U$, then the optimal control problem \eqref{p21}--\eqref{p23} with free~$t_1$ can be reduced to a time-optimal problem by introducing a new time $\tau$ such that $d \tau = \varphi(q,u) d t$.

If $\varphi(q,u)$ is not bounded from zero from below, then  existence of optimal controls may fail, consider, e.g., the problem with free $t_1$
\begin{align*}
&\dot{x} = u, \quad x, u  \in \R,\\
&x(0) = x_0, \quad x(t_1) = x_1,\\
& \int_0^{t_1} u^2 dt  \to \min. 
\end{align*}
\end{remark}

\subsection{Elements of symplectic geometry}
In order to state a fundamental necessary optimality condition --- Pontryagin Maximum Principle --- we need some basic facts of symplectic geometry, which we review in this subsection.

Let $M$ be an $n$-dimensional smooth manifold. Then the disjoint union of its tangent spaces $\bigsqcup\limits_{q \in M} T_q M = TM$ is called its tangent bundle. If $(x_1, \dots, x_n)$ are local coordinates on $M$, then any tangent vector $v \in T_q M$ has a decomposition $v = \sum^n_{i=1} v_i \frac{\partial}{\partial x_i}$. Thus $(x_1, \dots, x_n;~v_1, \dots, v_n)$ are local coordinates on $TM$, which is thus a $2n$-dimensional smooth manifold.

For any point $q \in M$, the dual space $(T_q M)^* = T^*_q M$ is called the cotangent space to $M$ at $q$. The disjoint union
$\bigsqcup\limits_{q \in M} T^*_q M = T^* M$ is called the cotangent bundle of $M$. If $(x_1, \dots, x_n)$ are local coordinates on $M$, then any covector $\lambda \in T^*_q M$ has  a decomposition $\lambda = \sum^n_{i=1} \xi_i dx_i$. Thus $(x_1, \dots, x_n;~ \xi_1, \dots, \xi_n)$ are local coordinates on $T^* M$ called canonical coordinates. In particular, $T^* M$ is a smooth $2n$-dimensional manifold.

The canonical projection is:
$$ \pi \colon T^* M \to M, \quad T^*_q M \ni \lambda \mapsto q \in M. $$
The Liouville (tautological) 1-form $s \in \Lambda^1 (T^* M)$ acts as follows:
$$ \langle s_\lambda, w \rangle = \langle \lambda, \pi_* w \rangle, \quad \lambda \in T^* M, \quad w \in T_\lambda (T^*M). $$
In canonical coordinates on $T^*M$:
\begin{align*}
&w = \sum^n_{i=1} a_i \frac{\partial}{\partial x_i} + b_i  \frac{\partial}{\partial \xi_i}, \\
&\pi_* w = \sum^n_{i=1} a_i \frac{\partial}{\partial x_i}, \\
&\lambda = \sum^n_{i=1} \xi_i dx_i, \\
&\langle s_\lambda, w \rangle = \sum^n_{i=1} \xi_i a_i, \\
&s_\lambda = \sum^n_{i=1} \xi_i dx_i.
\end{align*}
(In mechanics, the Liouville form is known as $s = pdq = \sum^n_{i=1} p_i dq_i$).

The canonical symplectic structure on $T^*M$ is $\sigma = d s \in \Lambda^2 (T^* M)$. In canonical coordinates $\sigma = \sum^n_{i=1} d \xi_i \wedge dx_i$ (in mechanics $\sigma = dp \wedge dq = \sum^n_{i=1} d p_i \wedge dq_i$).

A Hamiltonian is an arbitrary function $h \in C^\infty (T^* M)$.

The Hamiltonian vector field $\vec{h} \in \Vec(T^* M)$ with the Hamiltonian function $h$ is defined by the equality $dh = \sigma (\cdot, \vec{h})$. In canonical coordinates:
\begin{align*}
&h = h (x, \xi), \\
&dh = h_x d_x + h_\xi  d_\xi = \sum^n_{i=1} \frac{\partial h}{\partial x_i} d_{x_i} + \frac{\partial h}{\partial \xi_i} d \xi_i, \\
&\sigma = d \xi \wedge dx = \sum^n_{i=1} d \xi_i \wedge d {x_i}, \\
&\vec{h} = \frac{\partial h}{\partial \xi} \frac{\partial}{\partial x} - \frac{\partial h}{\partial x} \frac{\partial}{\partial \xi} = \sum^n_{i=1} \frac{\partial h}{\partial \xi_i}\frac{\partial}{\partial x_i} - \frac{\partial h}{\partial x_i} \frac{\partial}{\partial \xi_i} .
\end{align*}
The corresponding Hamiltonian system of ODEs is 
$$ \dot{\lambda} = \vec{h} (\lambda), \quad \lambda \in T^* M. $$
In canonical coordinates:
\begin{equation*}
 \begin{cases}
\dot{x} = \frac{\partial h}{\partial \xi},\\
\dot{\xi} = - \frac{\partial h}{\partial x},
\end{cases}
\end{equation*}
or 
\begin{equation*}
 \begin{cases}
\dot{x}_i = \frac{\partial h}{\partial \xi_i},\\
\dot{\xi}_i = - \frac{\partial h}{\partial x_i}, \quad i = 1, \dots, n.
\end{cases}
\end{equation*}
The Poisson bracket of Hamiltonians $h, g \in C^\infty (T^*M)$ is the Hamiltonian $\{ h, g \} \in C^\infty (T^* M)$ defined by the equalities
$$ \{ h, g \} = \vec{h} g = \sigma (\vec{h}, \vec{g}). $$
In canonical coordinates:
$$  \{ h, g \} = \frac{\partial h}{\partial \xi} \frac{\partial g}{\partial x} - \frac{\partial h}{\partial x} \frac{\partial g}{\partial \xi} = \sum^n_{i=1} \frac{\partial h}{\partial \xi_i}\frac{\partial g}{\partial x_i} - \frac{\partial h}{\partial x_i} \frac{\partial g}{\partial \xi_i}. $$
\begin{lemma}
Let $h, g, k \in C^\infty (T^* M)$, and $\alpha, \beta \in \mathbb{R}$. Then:
\begin{itemize}
\item $\{ \alpha h + \beta g, k \} = \alpha \{ h, k \} + \beta \{ g, k \},$
\item $\{ h, g \} = - \{ g, h \},$
\item $\{ h, h \} = 0,$
\item $\{ h, \{ g, k \} \} + \{ g, \{ k, h \} \} + \{ k, \{ h, g \} \} = 0,$
\item $\{ h, gk \} = \{ h, g \}k + g \{ h, k \}$.
\end{itemize}
\end{lemma}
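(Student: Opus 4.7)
My plan is to verify the five identities by unpacking the equivalent definitions $\{h,g\} = \vec{h} g = \sigma(\vec{h}, \vec{g})$, treating the first four as routine consequences of the tensorial properties of $\sigma$ and $\vec{h}$, and spending most of the effort on the Jacobi identity.

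First I would observe that the assignment $h \mapsto \vec{h}$ is $\mathbb{R}$-linear: the defining equation $dh = \sigma(\cdot,\vec{h})$ is linear in $h$ on the left and linear in $\vec{h}$ on the right, and non-degeneracy of $\sigma$ makes $\vec{h}$ unique. Combined with bilinearity of $\sigma$, this immediately gives the first identity. Skew-symmetry of $\sigma$ gives $\{h,g\} = -\{g,h\}$, and specializing $g=h$ yields $\{h,h\}=0$. For the Leibniz rule I use that $\vec{h}$, being a smooth vector field, is a derivation on $C^\infty(T^*M)$:
\begin{align*}
\{h, gk\} = \vec{h}(gk) = (\vec{h} g)\,k + g\,(\vec{h} k) = \{h,g\}\,k + g\,\{h,k\}.
\end{align*}

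For the Jacobi identity I would work directly in canonical coordinates, using $\{h,g\} = \sum_i (h_{\xi_i} g_{x_i} - h_{x_i} g_{\xi_i})$. Expanding $\{h,\{g,k\}\}$ and the two cyclic permutations produces, for each pair of indices $i,j$, terms that are products of two first partial derivatives and one mixed second partial derivative of the three Hamiltonians. The decisive observation is that in the cyclic sum each such second-derivative monomial appears exactly twice with opposite signs, so everything cancels. A more conceptual route, which I would adopt if the coordinate bookkeeping turns out to be oppressive, is to first establish $[\vec{h},\vec{g}] = \overrightarrow{\{h,g\}}$ from $d\sigma = 0$ via Cartan's magic formula, and then derive Jacobi by applying both sides to an arbitrary function $k$ and rearranging with the already-proven skew-symmetry.

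The principal obstacle is this last step: the expansion of $\{h,\{g,k\}\}$ yields on the order of a dozen terms per summation index, and one must carefully track signs to see that the second-derivative contributions cancel in pairs across the three cyclic summands. Ultimately this cancellation encodes the closedness $d\sigma = 0$ of the symplectic form, which is the geometric content of Jacobi; the remaining four items in the lemma are purely algebraic consequences of the definition.
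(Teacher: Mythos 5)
Your proof is correct. The paper itself states this lemma without a written proof (the companion lemma immediately afterwards carries only the note ``Computation in canonical coordinates,'' and this one is evidently intended to be read the same way), so there is no explicit argument to compare against; your direct coordinate expansion for the Jacobi identity together with the coordinate-free derivation-based arguments for the other four items fills that gap in exactly the expected manner.

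One remark on your ``more conceptual route.'' The paper derives $\overrightarrow{\{h,g\}} = [\vec{h},\vec{g}]$ as a \emph{corollary} of this lemma, explicitly invoking the Jacobi identity in the chain of equalities. Your alternative --- prove the bracket identity first from $d\sigma = 0$ via $L_{\vec{h}}\sigma = 0$ and Cartan's formula, then deduce Jacobi by applying both sides to $k$ and rearranging with antisymmetry --- inverts that logical order. It is a genuinely different and perfectly valid decomposition: it traces Jacobi back to closedness of the symplectic form rather than to a coordinate cancellation, and is arguably more illuminating. The cost is purely expository: the paper's subsequent one-line corollary becomes vacuous, since you will have proved it as input, so you would need to restructure the surrounding material if you took this route.
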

\begin{corollary}
Let $h, g \in C^\infty (T^* M)$. Then $\overrightarrow{\{ h, g \}} = [\vec{h}, \vec{g}]$.
\end{corollary}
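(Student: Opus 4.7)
The plan is to verify the identity $\overrightarrow{\{h,g\}}=[\vec{h},\vec{g}]$ by showing both sides are the Hamiltonian vector field associated to the same function. Since the symplectic form $\sigma$ is nondegenerate, the defining relation $dh=\sigma(\cdot,\vec{h})$ determines $\vec h$ uniquely, so it is enough to establish
$$d\{h,g\}=\sigma\bigl(\cdot,\,[\vec{h},\vec{g}]\bigr).$$

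The first step is to observe that the flow of any Hamiltonian vector field preserves the symplectic form, i.e.\ $\mathcal{L}_{\vec{h}}\sigma=0$. This follows from Cartan's magic formula $\mathcal{L}_X=d\,i_X+i_X\,d$: since $\sigma=ds$ is exact we have $d\sigma=0$, and by definition of $\vec h$ the interior product is $i_{\vec{h}}\sigma=-dh$, so $\mathcal{L}_{\vec{h}}\sigma=d(-dh)+i_{\vec{h}}(0)=0$.

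The second step uses this invariance together with the relation $\{h,g\}=\vec{h}g=\mathcal{L}_{\vec{h}}g$ and the fact that $d$ commutes with $\mathcal{L}_{\vec{h}}$:
$$d\{h,g\}=d\,\mathcal{L}_{\vec{h}}g=\mathcal{L}_{\vec{h}}\,dg=-\mathcal{L}_{\vec{h}}\bigl(i_{\vec{g}}\sigma\bigr),$$
where in the last equality I have used $dg=\sigma(\cdot,\vec{g})=-i_{\vec{g}}\sigma$. Now the standard Cartan identity
$$\mathcal{L}_X(i_Y\omega)=i_{[X,Y]}\omega+i_Y\mathcal{L}_X\omega$$
applied with $X=\vec{h}$, $Y=\vec{g}$, $\omega=\sigma$ yields, using $\mathcal{L}_{\vec{h}}\sigma=0$,
$$d\{h,g\}=-i_{[\vec{h},\vec{g}]}\sigma=\sigma\bigl(\cdot,\,[\vec{h},\vec{g}]\bigr),$$
which is exactly what was needed. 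Nondegeneracy of $\sigma$ then gives $[\vec{h},\vec{g}]=\overrightarrow{\{h,g\}}$.

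The main obstacle is that the excerpt does not develop the exterior-calculus machinery (Lie derivative, Cartan's magic formula, the identity for $\mathcal{L}_X i_Y$), so these have to be invoked as standard background. An entirely elementary alternative is a coordinate computation: expand $\vec h=h_\xi\partial_x-h_x\partial_\xi$ and $\vec g=g_\xi\partial_x-g_x\partial_\xi$ in canonical coordinates, apply the commutator formula $[V,W]=\tfrac{\partial W}{\partial q}V-\tfrac{\partial V}{\partial q}W$ from the exercise of Section~1.3, and verify by direct differentiation that the result coincides with $\overrightarrow{\{h,g\}}=\{h,g\}_\xi\partial_x-\{h,g\}_x\partial_\xi$; the mixed second derivatives cancel as in the proof of the Jacobi identity. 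This route is longer but uses only tools already in the paper.
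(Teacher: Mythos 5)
Your proof is correct, including all sign checks against the paper's conventions ($dh=\sigma(\cdot,\vec h)$, hence $i_{\vec h}\sigma=-dh$, and $\{h,g\}=\vec h g=\sigma(\vec h,\vec g)$), but it follows a genuinely different route from the paper. The paper argues by applying both sides as derivations to an arbitrary test Hamiltonian $k$: using $\vec f k=\{f,k\}$ one gets $[\vec h,\vec g]k=\{h,\{g,k\}\}-\{g,\{h,k\}\}$, which the Jacobi identity from the preceding Lemma converts into $\{\{h,g\},k\}=\overrightarrow{\{h,g\}}k$; since a vector field is determined by its action on smooth functions, the identity follows. That argument stays entirely inside the machinery already set up in the notes (Poisson bracket properties and the identification of vector fields with first-order differential operators). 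Your argument instead works at the level of forms: you prove $\mathcal{L}_{\vec h}\sigma=0$ via Cartan's magic formula and closedness of $\sigma=ds$, then use $\mathcal{L}_{\vec h}i_{\vec g}\sigma=i_{[\vec h,\vec g]}\sigma+i_{\vec g}\mathcal{L}_{\vec h}\sigma$ and nondegeneracy of $\sigma$ to identify $[\vec h,\vec g]$ as the Hamiltonian field of $\{h,g\}$. What your route buys is conceptual content the paper's computation hides --- the invariance of the symplectic form under Hamiltonian flows appears explicitly, and no Jacobi identity is needed (indeed your identity can be used to \emph{derive} Jacobi); what it costs is the exterior-calculus background (Lie derivative, interior product, the two Cartan formulas) that the notes never develop, which you correctly flag, and your fallback coordinate computation is essentially a longer version of the paper's derivation-based proof.
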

\begin{proof}
Let $h, g, k \in C^\infty (T^* M)$. Then  $[\vec{h}, \vec{g}]k = (\vec{h}\vec{g} - \vec{g}\vec{h})k = \vec{h}\vec{g}k - \vec{g}\vec{h}k = \vec{h}\{ g, k \} - \vec{g}\{ h, k \} = \{ h, \{ g, k \}\} - \{ g, \{ h, k \} \} = \{ h, \{ g, k \} \} + \{ g, \{ k, h \} \} = - \{ k, \{ h, g \} \} = \{ \{ h, g \}, k \} = \overrightarrow{\{ h, g \}} k$.
\end{proof}
\begin{theorem}[N$\ddot{o}$ther]
Let $a, h \in C^\infty (T^* M)$. Then
$$ a (e^{t \vec{h}} (\lambda)) \equiv \const \Leftrightarrow \{ h, a \} = 0. $$
\end{theorem}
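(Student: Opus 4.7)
The plan is to reduce the statement to the definitional identity $\{h,a\} = \vec{h}\,a$ from the previous lemma and the basic fact that a smooth vector field $\vec{h}$ acts on a smooth function $a$ by differentiation along its flow, namely
\[
\vec{h}\,a\,(\lambda) \;=\; \frac{d}{dt}\Big|_{t=0} a\bigl(e^{t\vec{h}}(\lambda)\bigr).
\]
Everything else is then a one-line ODE argument.

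Concretely, I would fix $\lambda \in T^*M$ and study the function $\phi_\lambda(t) := a\bigl(e^{t\vec{h}}(\lambda)\bigr)$. Using the flow property $e^{(t+s)\vec{h}} = e^{s\vec{h}} \circ e^{t\vec{h}}$ together with the displayed identity above, I would compute
\[
\dot\phi_\lambda(t) \;=\; \frac{d}{ds}\Big|_{s=0} a\bigl(e^{s\vec{h}}(e^{t\vec{h}}\lambda)\bigr) \;=\; (\vec{h}\,a)\bigl(e^{t\vec{h}}(\lambda)\bigr) \;=\; \{h,a\}\bigl(e^{t\vec{h}}(\lambda)\bigr),
\]
where the last equality uses the definition $\{h,a\} = \vec{h}\,a$.

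For the implication $(\Leftarrow)$, if $\{h,a\} \equiv 0$ on $T^*M$, then $\dot\phi_\lambda(t) = 0$ for all $t$, so $\phi_\lambda$ is constant, which is exactly the claim. For the converse $(\Rightarrow)$, assume $\phi_\lambda$ is constant for every $\lambda$; then in particular $\dot\phi_\lambda(0) = 0$, i.e.\ $\{h,a\}(\lambda) = 0$. Since $\lambda$ was arbitrary, $\{h,a\} \equiv 0$.

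There is essentially no obstacle here: the proof is a direct consequence of the definition of the Poisson bracket and of the action of a vector field on a function via its flow. The only subtlety worth flagging is that we are implicitly using completeness of $\vec{h}$ (or at least local existence of its flow near $t=0$), which is standard and does not affect the argument since the statement only compares values along a single integral curve.
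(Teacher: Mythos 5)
Your proposal is correct and follows the same route as the paper's one-line proof, which reads $a(e^{t\vec{h}}(\lambda)) \equiv \const \Leftrightarrow \vec{h}a = 0 \Leftrightarrow \{h,a\} = 0$; you have simply unpacked the first equivalence via the flow property $e^{(t+s)\vec{h}} = e^{s\vec{h}}\circ e^{t\vec{h}}$, and the second is the definition $\{h,a\} = \vec{h}a$.
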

\begin{proof}
$a (e^{t \vec{h}} (\lambda)) \equiv \const \Leftrightarrow  \vec{h} a  = 0 \Leftrightarrow \{ h, a \} = 0.$
\end{proof}
Let $X \in \Vec(M)$. The corresponding linear on fibers of $T^* M$ Hamiltonian is defined as follows:
$$ h_X (\lambda) = \langle \lambda, X(q) \rangle, \quad q = \pi (\lambda). $$
In canonical coordinates:
\begin{align*}
&X = \sum^n_{i=1} X_i  \frac{\partial}{\partial x_i}, \\
&h_X (x, \xi) = \sum^n_{i=1} \xi_i X_i.
\end{align*}
\begin{lemma}
Let $X, Y \in \Vec(M)$. Then:
\begin{itemize}
\item $\{ h_X, h_Y \} = h_{[X, Y]},$
\item $\{ \vec{h}_X, \vec{h}_Y \} = \vec{h}_{[X, Y]},$
\item $\pi_* \vec{h}_X = X.$
\end{itemize}
\end{lemma}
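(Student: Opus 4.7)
The plan is to prove the three items in a convenient order: the third is an immediate coordinate check, the first is the computational core, and the second follows formally from the first together with the corollary $\overrightarrow{\{h,g\}} = [\vec{h}, \vec{g}]$ already established above (I read the middle item as $[\vec{h}_X, \vec{h}_Y] = \vec{h}_{[X,Y]}$, since a Poisson bracket of vector fields is not defined).

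For the third item I work in canonical coordinates $(x,\xi)$. Since $h_X(x,\xi) = \sum_i \xi_i X_i(x)$, one has $\partial h_X/\partial \xi_i = X_i$ and $\partial h_X/\partial x_i = \sum_j \xi_j\, \partial X_j/\partial x_i$, so
\[
\vec{h}_X \;=\; \sum_i X_i(x)\,\frac{\partial}{\partial x_i} \;-\; \sum_i \Big(\sum_j \xi_j \frac{\partial X_j}{\partial x_i}\Big)\frac{\partial}{\partial \xi_i}.
\]
Since $\pi_*$ annihilates the $\partial/\partial \xi_i$ directions and is the identity on the $\partial/\partial x_i$ directions, we obtain $\pi_* \vec{h}_X = X$.

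For the first item I compute $\{h_X, h_Y\}$ directly from the canonical-coordinate formula. With $h_X = \sum_j \xi_j X_j(x)$ and $h_Y = \sum_j \xi_j Y_j(x)$, the cross-terms that would be quadratic in $\xi$ cancel by antisymmetry, and what remains is
\[
\{h_X,h_Y\} \;=\; \sum_j \xi_j \sum_i \Big(X_i \frac{\partial Y_j}{\partial x_i} \;-\; Y_i \frac{\partial X_j}{\partial x_i}\Big) \;=\; \sum_j \xi_j\,[X,Y]_j \;=\; h_{[X,Y]},
\]
where the next-to-last equality is the coordinate formula $[X,Y] = \frac{\partial Y}{\partial x}X - \frac{\partial X}{\partial x}Y$ from the earlier exercise. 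This is the only genuine computation in the lemma.

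For the middle item I simply combine the corollary $\overrightarrow{\{h,g\}} = [\vec{h},\vec{g}]$ with the first item just proved: $[\vec{h}_X,\vec{h}_Y] = \overrightarrow{\{h_X,h_Y\}} = \overrightarrow{h_{[X,Y]}} = \vec{h}_{[X,Y]}$. There is no real obstacle anywhere; the only thing one must be careful about is bookkeeping the $\xi$-indices in the Poisson-bracket calculation and invoking the correct sign convention for $[X,Y]$ in local coordinates so that the two coordinate identities match on the nose.
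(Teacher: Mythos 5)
Your proof is correct and matches the paper's approach, whose entire argument is the one-line remark ``Computation in canonical coordinates'' --- you have simply supplied the details. You also correctly reinterpret the paper's $\{\vec{h}_X,\vec{h}_Y\}$ as the Lie bracket $[\vec{h}_X,\vec{h}_Y]$ (a Poisson bracket of vector fields being undefined); one small inaccuracy is the remark that ``cross-terms quadratic in $\xi$ cancel by antisymmetry'' --- in fact both terms of $\{h_X,h_Y\}$ are already linear in $\xi$, so no quadratic terms arise to be cancelled, but this does not affect the computation.
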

\begin{proof}
Computation in canonical coordinates.
\end{proof}
The vector field $\vec{h}_X \in \Vec(T^* M)$ is called the Hamiltonian lift of the vector field $X \in \Vec(M)$.

\subsection{Pontryagin Maximum Principle}
Consider optimal control problem \eqref{p21}--\eqref{p23} with fixed terminal time $t_1$.
\begin{theorem}[PMP]
\label{th:PMP}
If $u(t)$ and $q(t), t \in [0, t_1]$, are optimal, then there exist a curve $\lambda_t \in \Lip ([0, t_1], T^* M)$, $\lambda_t \in T^*_{q(t)} M$, and a number $\nu \le 0$ such that the following conditions hold for almost all $t \in [0, t_1]$:
\begin{enumerate}
\item $\dot{\lambda}_t = \vec{h}^\nu_{u(t)} (\lambda_t)$,
\item $h^\nu_{u(t)} (\lambda_t) = \max\limits_{v \in U} h^\nu_{v} (\lambda_t) $,
\item $(\lambda_t, \nu) \ne (0, 0)$.
\begin{remark}
If the terminal time $t_1$ is free, then the following condition is added to 1--3:
\end{remark}
\item $h^\nu_{u(t)} (\lambda_t) \equiv 0.$ \label{p25}
\end{enumerate}
\end{theorem}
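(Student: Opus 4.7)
The plan is to follow the classical needle-variation proof. First I would reformulate optimality geometrically by introducing the extended state $\tilde q = (q, y) \in \tilde M = M \times \mathbb{R}$ with dynamics $\dot{\tilde q} = \tilde f(\tilde q, u) := (f(q,u), \varphi(q,u))$ and $\tilde q(0) = (q_0, 0)$, so that the cost becomes the $y$-coordinate at time $t_1$. Optimality of $u(\cdot)$ means that the extended endpoint $\tilde q(t_1) = (q_1, J^*)$ lies on the boundary of the extended attainable set $\tilde A_{\tilde q_0}(t_1)$: no nearby admissible control can reach $(q_1, J^* - \delta)$ for any $\delta > 0$.

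Next I would produce first-order variations. For each Lebesgue point $\tau \in (0, t_1)$ of $u$ and each $v \in U$, the needle control $u_\varepsilon$ equal to $v$ on $[\tau - \varepsilon, \tau]$ and to $u$ otherwise perturbs the endpoint by $\varepsilon\, w_\tau^v + o(\varepsilon)$, where $w_\tau^v$ is obtained by transporting the instantaneous jump $\tilde f(\tilde q(\tau), v) - \tilde f(\tilde q(\tau), u(\tau))$ forward along the reference flow from time $\tau$ to time $t_1$ via its linearization (the push-forward by the variational equation). Concatenation of finitely many disjoint needles with nonnegative weights shows that the convex conic hull $K \subset T_{\tilde q(t_1)} \tilde M$ of all such $w_\tau^v$ lies inside an approximating cone of $\tilde A_{\tilde q_0}(t_1)$ at $\tilde q(t_1)$.

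Optimality implies that the downward vertical vector $-\partial/\partial y$ does not lie in the relative interior of $K$, for otherwise an open-mapping (implicit function) argument applied to the map sending needle parameters to endpoints would produce an admissible control reaching $(q_1, J^* - \delta)$. Hahn-Banach then yields a nonzero covector $(\lambda_{t_1}, \nu) \in T^*_{\tilde q(t_1)} \tilde M$ separating $K$ from $-\partial/\partial y$; the separation direction forces $\nu \le 0$ and $\langle \lambda_{t_1}, \pi_* w_\tau^v\rangle + \nu\,(\text{vertical component of } w_\tau^v) \le 0$ for every needle. Transporting this covector backwards along the Hamiltonian lift of the reference dynamics on $T^* M$, i.e. defining $\lambda_t$ by $\dot\lambda_t = \vec h^\nu_{u(t)}(\lambda_t)$ with $h^\nu_v(\lambda) = \langle \lambda, f(q,v)\rangle + \nu \varphi(q,v)$, turns the separation inequality at time $t_1$ into the pointwise maximum condition $h^\nu_{u(t)}(\lambda_t) \ge h^\nu_v(\lambda_t)$ at each Lebesgue point $t$ and each $v \in U$. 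Nontriviality $(\lambda_t, \nu) \ne (0,0)$ is inherited from $(\lambda_{t_1}, \nu) \ne (0,0)$ via invertibility of the backward transport.

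For free $t_1$ I would enlarge the family of variations by endpoint time-shifts, which add $\pm \tilde f(\tilde q(t_1), u(t_1))$ to the cone $K$; separation from both signs forces $h^\nu_{u(t_1)}(\lambda_{t_1}) = 0$, and since $t \mapsto h^\nu_{u(t)}(\lambda_t)$ is a.e.\ constant (a standard consequence of the Hamiltonian equation together with the maximum condition at Lebesgue points), we get $h^\nu_{u(t)}(\lambda_t) \equiv 0$. The main obstacle is the needle-variation machinery itself: one must show rigorously that the first-order approximation of the control-to-endpoint map is uniform enough in $\varepsilon$, that finite concatenations of needles really produce an approximating cone of $\tilde A_{\tilde q_0}(t_1)$ in the sense required by the separation step, and that the cone is ``thick'' enough for the open-mapping argument to deliver a contradiction whenever $-\partial/\partial y$ would be interior. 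Everything else is linear-algebraic manipulation of the lifted Hamiltonian, using $\pi_* \vec h_X = X$ and the correspondence between Poisson brackets and Lie brackets recorded in the preceding subsection.
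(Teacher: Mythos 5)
The paper itself gives no proof of this theorem: PMP is stated as a fundamental result and the reader is referred to \cite{notes} (Agrachev--Sachkov) for the proof. So there is no ``paper's proof'' to compare against. Your sketch is, however, a correct outline of the classical Pontryagin--Boltyanskii needle-variation proof, which is indeed the argument given in \cite{notes} (modulo the fact that Agrachev--Sachkov prefer a chronological-calculus formulation of the variational flow). The structure is right: extend the state by the running cost so that optimality becomes a boundary statement for the extended attainable set; build the variational cone from needle perturbations at Lebesgue points, transported to time $t_1$ by the linearized (variational) flow; separate the cone from $-\partial/\partial y$ to get $(\lambda_{t_1},\nu)\neq 0$ with $\nu\le 0$; pull $\lambda_{t_1}$ back along the adjoint/Hamiltonian-lift equation, which by duality with the variational flow converts the terminal inequality into the pointwise maximum condition; and for free $t_1$ adjoin time-shift variations and use constancy of $t\mapsto h^\nu_{u(t)}(\lambda_t)$. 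You correctly flag the one genuinely delicate point, namely that the conic hull of needle directions is an approximating cone in the sense required for the open-mapping/separation step (the ``thickness'' issue, handled via a topological degree or Brouwer fixed-point argument on finite needle packets). One small imprecision: the dichotomy is ``$-\partial/\partial y$ is not an \emph{interior} point of $K$ in $T_{\tilde q(t_1)}\tilde M$''; if $K$ has empty interior it lies in a hyperplane whose normal already separates, so ``relative interior'' is not quite the right phrase, but the separation lemma you invoke covers both cases. As a blind sketch of a theorem the notes deliberately leave unproved, this is sound.
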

\paragraph{Time-optimal problem}
We have $J = t_1 = \int^{t_1}_0 1 dt \to \min$, and PMP is expressed in terms of the shortened Hamiltonian $g_u (\lambda) = \langle \lambda, f(q, u) \rangle$.
\begin{corollary}
If $u(t)$ and $q(t), t \in [0, t_1]$, are time-optimal, then there exists a curve $\lambda_t \in \Lip ([0, t_1], T^*M)$ for which the following conditions hold for almost all $t \in [0, t_1]$:
\begin{enumerate}
\item $\dot{\lambda}_t = \vec{g}_{u(t)} (\lambda_t)$,
\item $g_{u(t)}  (\lambda_t)  = \max\limits_{v \in U} g^\nu_v  (\lambda_t),$
\item $\lambda_t \ne 0,$
\item $g_{u(t)}  (\lambda_t) \equiv \const \ge 0.$
\setcounter{cntr}{\value{enumi}}
\end{enumerate}
\end{corollary}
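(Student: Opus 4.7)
The plan is to derive this corollary as a direct specialization of Theorem \ref{th:PMP} to the time-optimal problem, in which the integrand of the cost is $\varphi(q,u)\equiv 1$ and the terminal time $t_1$ is free, so that all four items of PMP (including the free-time transversality condition \eqref{p25}) apply.

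First I would write out the PMP Hamiltonian in this setting: $h^\nu_u(\lambda)=\langle\lambda,f(q,u)\rangle+\nu\varphi(q,u)=g_u(\lambda)+\nu$. Since $\nu$ is a constant on $T^*M$, its differential vanishes, hence $\vec h^\nu_u=\vec g_u$. Item 1 of PMP thus becomes $\dot\lambda_t=\vec g_{u(t)}(\lambda_t)$, which is item 1 of the corollary. The maximality condition $h^\nu_{u(t)}(\lambda_t)=\max_{v\in U}h^\nu_v(\lambda_t)$ is invariant under adding the same constant $\nu$ to every $h^\nu_v$, so it reduces to $g_{u(t)}(\lambda_t)=\max_{v\in U}g_v(\lambda_t)$, giving item 2.

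Next I would use the free-time transversality condition (item 4 of PMP): $h^\nu_{u(t)}(\lambda_t)\equiv 0$ translates into $g_{u(t)}(\lambda_t)\equiv -\nu$. Because $\nu\le 0$, the right-hand side is a nonnegative constant, which yields item 4 of the corollary: $g_{u(t)}(\lambda_t)\equiv\const\ge 0$.

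Finally, for the nontriviality item 3, I would argue by contradiction using the nontriviality $(\lambda_t,\nu)\ne(0,0)$ of PMP. If $\lambda_t\equiv 0$, then $g_{u(t)}(\lambda_t)\equiv 0$, so by the identity just derived $\nu=0$, contradicting $(\lambda_t,\nu)\ne(0,0)$. Hence $\lambda_t\not\equiv 0$, and in fact $\lambda_t\ne 0$ for every $t$: the flow of the (possibly time-dependent) Hamiltonian vector field $\vec g_{u(t)}$ is a family of diffeomorphisms of $T^*M$ preserving the zero section fiberwise would be too strong to claim in general, so the cleanest route is to observe that the constant $-\nu=g_{u(t)}(\lambda_t)$ is the same for all $t$, hence if $\lambda_{t_0}=0$ at some $t_0$ then $-\nu=0$ and we again reach the same contradiction with $(\lambda_t,\nu)\ne(0,0)$. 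This gives item 3.

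There is no real obstacle here; the whole argument is a bookkeeping exercise showing that under $\varphi\equiv 1$ the constant $\nu$ gets absorbed into the conserved value of the shortened Hamiltonian, and that this conservation together with the PMP nontriviality forces $\lambda_t\ne 0$.
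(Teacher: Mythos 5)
The paper states this corollary without proof, so there is no official argument to compare against; your derivation is the correct and natural one, simply specializing Theorem \ref{th:PMP} to $\varphi\equiv 1$ with free $t_1$. All four items follow cleanly: $\vec h^\nu_u=\vec g_u$ because constants have zero differential, the maximum condition is invariant under adding the same constant $\nu$, the free-time condition $h^\nu_{u(t)}(\lambda_t)\equiv 0$ becomes $g_{u(t)}(\lambda_t)\equiv-\nu\ge 0$, and the conservation of this value combined with PMP nontriviality forces $\lambda_t\ne 0$. One small remark: you could also obtain item 3 more directly by noting that when $\varphi\equiv 1$ the adjoint equation is linear in the covector, so $\lambda_{t_0}=0$ at a single time would force $\lambda_t\equiv 0$ and hence $\nu\ne 0$, but then $g_{u(t)}(\lambda_t)\equiv -\nu\ne 0$ while $\langle 0,f\rangle=0$, a contradiction. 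Your route via the conserved value is equally valid and avoids invoking linearity of the adjoint system.
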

\paragraph{Optimal control problem with general boundary conditions}
Consider optimal control problem \eqref{p21}, \eqref{p23}, where the boundary condition \eqref{p22} is replaced by the following more general one:
\begin{align}
\label{p24}
q(0) \in N_0, \quad q(t_1) \in N_1.
\end{align}
Here $N_0, N_1 \subset M$ are smooth submanifolds.

For problem \eqref{p21}, \eqref{p23}, \eqref{p24} there hold Pontryagin Maximum Principle with conditions 1--3 of Th.~\ref{th:PMP} for fixed $t_1$ (plus condition 4 for free $t_1$), with additional transversality conditions
\begin{enumerate}
\setcounter{enumi}{\value{cntr}}
\item $\lambda_0 \perp T_{q_0} N_0, \quad \lambda_{t_1} \perp T_{q (t_1)} N_1$.
\end{enumerate}
A control $u(t)$ and a trajectory $q(t)$ that satisfy PMP are called extremal control and extremal trajectory; a curve $\lambda_t$ that satisfy PMP is called extremal.
\begin{remark}
If a pair $(\lambda_t, \nu)$ satisfy PMP, then for any $k > 0$ the pair $(k \lambda_t, k \nu)$ also satisfies PMP.

The case $\nu < 0$ is called the normal case. In this case the pair $(\lambda_t, \nu)$ can be normalized to get $\nu= -1$.

The case $\nu = 0$ is called the abnormal case.
\end{remark}

\begin{theorem}
Let $H \in C^2(T^*M)$. Then a curve $\lambda_t$ is extremal iff it is a trajectory of the Hamiltonian system $\dot \lambda_t = \vec{H}(\lambda_t)$.
\end{theorem}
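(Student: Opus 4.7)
The intended reading is that $H$ denotes the maximized Hamiltonian $H(\lambda) = \max_{u \in U} h^\nu_u(\lambda)$, and the hypothesis is that this function happens to lie in $C^2(T^*M)$. So the plan is to prove that, under this regularity, the PMP-extremals coincide with the flow lines of $\vec H$. The core idea for both directions is an envelope-type argument saying that at every point $\lambda$ where the max is attained by some $u$, the Hamiltonians $H$ and $h^\nu_u$ agree to first order.

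For the forward implication, I would take an extremal $\lambda_t$ with control $u(t)$ and fix a time $t$ where both the maximum condition and $\dot\lambda_t = \vec{h}^\nu_{u(t)}(\lambda_t)$ hold. By definition of $H$, the function $\Phi(\lambda) = H(\lambda) - h^\nu_{u(t)}(\lambda)$ is nonnegative on $T^*M$ and vanishes at $\lambda_t$, so $\lambda_t$ is a global minimum of $\Phi$. Since both $H$ and $h^\nu_{u(t)}$ are differentiable there (the former by the $C^2$ hypothesis, the latter because $h^\nu_u$ is smooth in $\lambda$ for each fixed $u$), I get $dH|_{\lambda_t} = dh^\nu_{u(t)}|_{\lambda_t}$. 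Because $\vec{h}$ is defined from $dh$ via the nondegenerate symplectic form $\sigma$, this equality of differentials lifts to $\vec H(\lambda_t) = \vec{h}^\nu_{u(t)}(\lambda_t)$, and combined with PMP condition 1 this gives $\dot\lambda_t = \vec H(\lambda_t)$ almost everywhere.

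For the reverse implication, suppose $\dot\lambda_t = \vec H(\lambda_t)$; I need to produce an admissible $u(t)$ witnessing PMP. I would apply a measurable-selection theorem (Filippov's selection lemma, using compactness/closedness hypotheses on $U$ that are standard in the PMP setup) to choose $u(t) \in \arg\max_{v \in U} h^\nu_v(\lambda_t)$ measurably along $t$. Then condition 2 is built in, and the envelope argument above, applied at each $t$ to this choice of $u$, gives $\vec{h}^\nu_{u(t)}(\lambda_t) = \vec H(\lambda_t) = \dot\lambda_t$, which is condition 1. The nontriviality condition 3 is preserved under the Hamiltonian flow (since $\vec H$ has a well-defined flow and we can take $\nu = -1$ in the normal case, with the abnormal case handled separately using that $(\lambda_t,\nu)\neq(0,0)$ is an open condition along trajectories).

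The main obstacle is making the envelope/first-order equality $dH|_{\lambda_t} = dh^\nu_{u(t)}|_{\lambda_t}$ rigorous; everything else (the measurable selection, the transfer from $dh$ to $\vec h$, and the translation between PMP conditions) is fairly mechanical once this is in place. The $C^2$ assumption is used precisely to make $\Phi$ differentiable and to ensure $\vec H$ is genuinely Lipschitz so that the Hamiltonian ODE $\dot\lambda = \vec H(\lambda)$ defines trajectories in the classical sense, matching the Lipschitz regularity of $\lambda_t$ required by PMP.
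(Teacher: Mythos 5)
The paper states this theorem without supplying a proof, so there is nothing of the author's argument to compare yours against; what follows is an assessment of your proposal on its own terms. Your envelope device is the standard and correct core of the argument: at a point $\lambda_t$ where the max is attained by $u$, the nonnegative function $\Phi = H - h^\nu_u$ has a global minimum, so $dH|_{\lambda_t} = dh^\nu_u|_{\lambda_t}$, and nondegeneracy of the symplectic form $\sigma$ then forces $\vec H(\lambda_t) = \vec{h}^\nu_u(\lambda_t)$. Your accounting of where the $C^2$ hypothesis enters (differentiability of $H$ for the envelope step, $C^1$ regularity of $\vec H$ so the ODE $\dot\lambda = \vec H(\lambda)$ has classical solutions matching the Lipschitz class of $\lambda_t$) is also accurate, and the use of a measurable-selection lemma in the converse is the right tool.

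Two points need tightening. In the forward direction your argument yields $\dot\lambda_t = \vec H(\lambda_t)$ only for a.e.\ $t$; you should add the routine observation that a Lipschitz curve satisfying an ODE a.e.\ with continuous right-hand side is in fact a classical $C^1$ solution, so that $\lambda_t$ genuinely is a trajectory of $\vec H$. In the converse, the claim that nontriviality $(\lambda_t,\nu)\neq(0,0)$ is ``preserved under the Hamiltonian flow'' is not substantiated and is not automatic: in the abnormal case $\nu=0$ nothing in the Hamiltonian structure alone prevents a trajectory of $\vec H$ from reaching the zero section. The clean reading of the theorem, consistent with the paper's preceding remark about normalizing $\nu$, is that $\nu$ is held fixed and the stated equivalence concerns PMP conditions 1 and 2, with condition 3 treated as a side constraint imposed on both classes of curves rather than something to be derived.
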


\subsection{Solution to optimal control problems}
\paragraph{Stopping a train}
We have the time-optimal problem
\begin{align*}
&\dot{x}_1 = x_2, \quad x = (x_1, x_2) \in \mathbb{R}^2,\\
&\dot{x}_2 = u, \quad |u| \le 1,\\
&x(0) = x^0, \quad x(t_1) = x^1 = (0, 0),\\
&t_1 \to \min.
\end{align*}
The right-hand side of the control system $f(x, u) = (x_2, u)^T$ satisfies the bound
$$ | f (x, u) | = \sqrt{x^2_2 + u^2} \le \sqrt{x^2_2 + 1} \le |x| + 1, $$
thus $r = x^2$ satisfies the differential inequality
$$ \dot{r} = 2 \langle x, \dot{x} \rangle = 2 \langle x, f(x, u) \rangle  \le 2(r + 1). $$
So $r(t) \le e^{2t} (r_0 + 1)$, thus attainable set satisfies the apriori bound
$$ A_{x^0} (\le t) \subset \{ x \in \mathbb{R}^2  \mid  |x| \le e^t \sqrt{(x^0)^2 + 1} \}.$$
Thus we can assume that there exists a compact $K \subset \mathbb{R}^2$ such that the right-hand side of the control system vanishes outside of $K$ (one of conditions of Filippov theorem).

Now we compute the orbit $O_{x^0}$. Denote $\mathcal{F} = \{ f(x, u)  \mid  |u| \le 1 \}$. We have $f(x, 0) = x_2 \frac{\partial}{\partial x_1} \in \mathcal{F}$, $f(x, 1) = x_2 \frac{\partial}{\partial x_1} + \frac{\partial}{\partial x_2} \in \mathcal{F}$, thus $f(x, 1) - f(x, 0) = \frac{\partial}{\partial x_2} \in \Span (\mathcal{F})$.

Consequently, $[x_2 \frac{\partial}{\partial x_1}, \frac{\partial}{\partial x_2}] = - \frac{\partial}{\partial x_1} \in \Lie_x (\mathcal{F})$. Summing up, $\Lie_x (\mathcal{F}) \supset \Span (\frac{\partial}{\partial x_1}, \frac{\partial}{\partial x_2}) = T_x \mathbb{R}^2$, thus $O_{x^0} = \mathbb{R}^2$ for any $x^0 \in \mathbb{R}^2$.

Now we study the attainable set $A_{x^0}$. For the controls $u = \pm 1$, the trajectories are parabolas $x_1 = \pm \frac{x^2_2}{2} + C$. Geometrically it is obvious that $A_{x^0} \ni x' = (0,0)$ for any point $x^0 \in \mathbb{R}^2$.

The set of control parameters $U$ is compact, and the set of admissible velocity vectors $f(x, U)$ is convex for any $x \in \mathbb{R}^2$. All hypotheses of Filippov
theorem are satisfied, thus optimal control exists.

We apply PMP using canonical coordinates on $T^* \mathbb{R}^2$. We decompose a covector $\lambda = \psi_1 dx_1 + \psi_2 dx_2 \in T^* \mathbb{R}^2$, then the shortened Hamiltonian of PMP reads
$$ h_u (\lambda) = \psi_1 x_2 + \psi_2 u, $$
and the Hamiltonian system $\dot{\lambda} = \vec{h}_u (\lambda)$ reads
\begin{align*}
&\dot{x}_1 = x_2, \quad \dot{\psi}_1 = 0, \\
&\dot{x}_2 = u, \quad \dot{\psi}_2 = - \psi_1.
\end{align*}
The maximality condition of PMP has the form
$$ h_u (\lambda) = \psi_1 x_2 + \psi_2 u \to \max\limits_{|u| \le 1},$$
and the nontriviality condition is
$$ (\psi_1 (t), \psi_2 (t)) \ne (0, 0). $$
The Hamiltonian system implies that $\psi_1 \equiv \const$, $\psi_2 (t)$ is linear, moreover, $\psi_2 (t) \not\equiv 0$
with account of the nontriviality condition. The maximality condition yields:
\begin{align*}
&\psi_2 (t) > 0 \Rightarrow u(t) = 1, \\
&\psi_2 (t) < 0 \Rightarrow u(t) = -1.
\end{align*}
Thus extremal trajectories are
$$ x_1 = \pm \frac{x^2_2}{2} + C, $$
and the number of switchings (discontinuities) of control is not greater than 1. Let us draw such trajectories backward in time, starting from the origin $x^1$:
\begin{itemize}
\item the controls $u = \pm 1$, $u = -1$ generate two half-parabolas terminating at $x^1$:
$$ x_1 = \frac{x^2_2}{2}, \quad x_2 \le 0, \textrm{ and } x_1 = -\frac{x^2_2}{2}, \quad x_2 \ge 0, $$
\item denote the union of these half-parabolas as $\Gamma$,
\item after one switching, parabolic arcs with $u = 1$ terminating at the half-parabola $x_1 = -\frac{x^2_2}{2}, \quad x_2 \ge 0$, fill the part of the plane $\mathbb{R}^2$ below the curve $\Gamma$,
\item similarly, after one switching, parabolic arcs with $u = -1$ fill the part of the plane over the curve $\Gamma$.
\end{itemize}

So through each point of the plane $\mathbb{R}^2$ passes a unique extremal trajectory. Taking into account existence of optimal controls, the extremal trajectories are optimal.

The optimal control found has explicit dependence on the current point of the plane:
\begin{itemize}
\item if $x_1 = \frac{x^2_2}{2}, \quad x_2 \le 0$, or if the point $(x_1, x_2)$ is below the curve $\Gamma$, then $u(x_1, x_2) = 1,$
\item otherwise, $u(x_1, x_2) = -1$.
\end{itemize}
Such a dependence $u(x)$ of optimal control on the current point $x$ is called optimal synthesis, it is the best possible form of solution to an optimal control problem.

\paragraph{Markov-Dubins car}
We have a time-optimal problem
\begin{align*}
&\dot{x} = \cos \theta, \quad q = (x ,y, \theta) \in \mathbb{R}^2_{x,y} \times S^1_{\theta} = M \\
&\dot{y} = \sin \theta, \quad |u| \le 1,\\
&\dot{\theta} = u,\\
&q(0) = q_0 = (0, 0, 0), \quad q(t_1) = q_1,\\
&t_1 \to \min.
\end{align*}
First we compute the orbit of the family $\mathcal{F} = \{ f(q, u)  \mid  |u| \le 1 \}$, where $f(q, u) = \cos \theta \frac{\partial}{\partial x} + \sin \theta \frac{\partial}{\partial y} + u \frac{\partial}{\partial \theta}$. 

We have $f(q, 0) = \cos \theta \frac{\partial}{\partial x} + \sin \theta \frac{\partial}{\partial y} \in \mathcal{F}$, $f(q, 1) - f(q, 0) = \frac{\partial}{\partial \theta} \in \Span(\mathcal{F})$, thus $[\cos \theta \frac{\partial}{\partial x} + \sin \theta \frac{\partial}{\partial y}, \frac{\partial}{\partial \theta}] = \sin \theta \frac{\partial}{\partial x} - \cos \theta \frac{\partial}{\partial y} \in \Lie_q (\mathcal{F})$.
So $\Lie_q (\mathcal{F}) \supset \Span (\frac{\partial}{\partial x}, \frac{\partial}{\partial y}, \frac{\partial}{\partial \theta}) = T_q M$, thus $O_q = M$ for any $q \in M$.

Now we evaluate the attainable set $A_q$. Introduce, along with the system $\mathcal{F}$, a smaller system
\begin{align*}
&\mathcal{F}_1 = \{ f_0 + f_1, f_0 - f_1 \}, \\
&f_0 = f(q, 0) = \cos \theta \frac{\partial}{\partial x} + \sin \theta \frac{\partial}{\partial y}, \\
&f_1 = \frac{\partial}{\partial \theta}.
\end{align*}
Since $\mathcal{F}_1 \subset \mathcal{F}$, then $A_q (\mathcal{F}_1) \subset A_q (\mathcal{F})$ for any $q \in M$.

Compute the trajectories of the vector fields $f_0 \pm f_1$:
\begin{align*}
&u = \pm 1, \\
&\theta = \theta_0 \pm t,\\
&x = x_0 \pm (\sin(\theta_0 \pm t) - \sin \theta_0),\\
&y = y_0 \pm (\cos \theta_0 - \cos (\theta_0 \pm t)).
\end{align*}
These trajectories are $2\pi$-periodic, thus $e^{-t(f_0 \pm f_1)} = e^{(2\pi n - t)(f_0 \pm f_1)}$, i.e., any point
attainable via the fields $f_0 \pm f_1$ in a negative time is attainable in a positive time as well. Consequently, $A_q (\mathcal{F}_1) = O_q (\mathcal{F}_1)$. But $O_q (\mathcal{F}_1) = M$ via Rashevsky-Chow theorem. So we get the chain
$$ A_q (\mathcal{F}) \supset A_q (\mathcal{F}_1) = O_q (\mathcal{F}_1) = M,$$
whence $A_q(\mathcal{F}) = M$ for any $q \in M$.

All conditions of Filippov theorem are satisfied: $U$ is compact, $f(q, u)$ are convex, the bound $|f(q, u)| \le 2$ implies apriori bound of the attainable set. Thus optimal control exists.

We apply PMP. The vector fields $f_0, f_1, f_2 = [f_0, f_1] = \sin \theta \frac{\partial}{\partial x} - \cos \theta \frac{\partial}{\partial y}$ form a frame in $T_q M$. Define the corresponding linear on fibers of $T^* M$ Hamiltonians:
$$ h_i (\lambda) = \langle \lambda, f_i \rangle, \quad i = 0, 1, 2.$$
The shortened Hamiltonian of PMP is 
$$ h_u (\lambda) = \langle \lambda, f_0 + u f_1 \rangle = h_0 + u h_1. $$
The functions $h_0, h_1, h_2$ form a coordinate system on $T_q^* M$, and we write the Hamiltonian system of PMP in the parameterization $(h_0, h_1, h_2, q)$ of $T^* M$:
\begin{align*}
&\dot{h}_0 = \vec{h}_u h_0 = \{ h_0 + u h_1, h_0 \} = -u h_2,\\
&\dot{h}_1 = \{ h_0 + uh_1, h_1 \} = h_2,\\
&\dot{h}_2 = \{ h_0 + uh_1, h_2 \} = u h_0, \\
&\dot{q} = f_0 + u f_1.
\end{align*}
The maximality condition $h_u (\lambda) = h_0 + uh_1 \to \max\limits_{|u| \le 1}$ implies that if $h_1 (\lambda_t) \ne 0$, then $u(t) = \sgn h_1 (\lambda_t)$.

Consider the case where the control is not determined by PMP: $h_1 (\lambda_t) \equiv 0$ (this case is called singular). Then the Hamiltonian system gives $h_2 (\lambda_t) \equiv 0$, thus $h_0 (\lambda_t) \ne 0$, so $u(t) \equiv 0$. The corresponding extremal trajectory $(x(t), y(t))$ is a straight line.

If $u(t) = \pm 1$, then the extremal trajectory $(x(t), y(t))$ is an arc of a unit circle.
One can show that optimal trajectories have one of the following two types:
\begin{enumerate}
\item arc of unit circle + line segment + arc of unit circle,
\item concatenation of arcs of unit circles with not more than 3 switchings; the angle of rotation between switchings is the same and belongs to $[\pi, 2\pi)$.
\end{enumerate}
If boundary conditions are far one from another, then optimal trajectory has type 1 and can explicitly be constructed as follows. Draw two unit circles that satisfy the initial condition and two unit circles that satisfy the terminal condition. Draw four common tangents to initial circles and terminal circles, with account of direction of motion along the circles determined by the boundary conditions. Among the four constructed extremal trajectories, find the shortest one. It is the optimal
 trajectory.
 
 Optimal synthesis for the Dubins car is known, but it is rather complicated.
\paragraph{Euler elasticae}
We have the optimal control problem
\begin{align*}
&\dot{x} = \cos \theta, \quad q = (x, y, \theta) \in \mathbb{R}^2_{x,y} \times S^1_{\theta} = M,\\
&\dot{y} = \sin \theta, \quad u \in \mathbb{R}, \\
&\dot{\theta} = u,\\
&q(0) = q_0 = (0, 0, 0), \quad q(t_1) = q_1,\\
&t_1 \textrm{ is fixed},\\
&J = \frac12 \int^{t_1}_0 u^2 dt \to \min.
\end{align*}
Choosing appropriate unit of length in the plane $\mathbb{R}^2_{x,y}$, we can assume that $t_1 = 1$.

The control system in this example is the same as in the previous one, thus $O_{q_0} = M$.

Geometrically it is obvious that
$$ A_{q_0} (1) = \{ q \in M  \mid  x^2 + y^2 < 1 \textrm{ or } (x, y, \theta) = (1, 0, 0) \}.$$
We suppose in  the sequel that $q_1 \in A_{q_0} (1)$. The set of control parameters $U = \mathbb{R}$ is noncompact, thus Filippov theorem is not applicable. One can show (using general existence results of optimal control theory) that optimal control exists.

Denote the frame on $M$:
\begin{align*}
&f_1 = \cos \theta \frac{\partial}{\partial x}+ \sin \theta \frac{\partial}{\partial y}, \\
&f_2 = \frac{\partial}{\partial \theta},\\
&f_3 = \sin \theta \frac{\partial}{\partial x}+ \sin \theta \frac{\partial}{\partial y},
\end{align*}
and introduce linear on fibers Hamiltonians --- coordinates on $T^*_q M$:
$$ h_i (\lambda) = \langle \lambda, f_i \rangle, \quad i = 1, 2, 3. $$
Then the Hamiltonian of PMP reads
$$ h^{\nu}_u (\lambda) = \langle \lambda, f_1 + u f_2 \rangle + \frac{\nu}{2}u^2 = h_1 + u h_2 + \frac{\nu}{2} u^2. $$
The corresponding Hamiltonian system of PMP 
reads:
\begin{align*}
&\dot{h}_1 = \{ h_1 + u h_2, h_1 \} = - u h_3,\\
&\dot{h}_2 = \{ h_1 + u h_2, h_2 \} = h_3,\\
&\dot{h}_3 = \{ h_1 + u h_2, h_3 \} = u h_1,\\
&\dot{q} = f_1 + u f_2.
\end{align*}
The maximality condition of PMP is 
$$ h_1 + u h_2 + \frac{\nu}{2} u^2 \to \max\limits_{u \in \mathbb{R}}.$$
Consider first the abnormal case $\nu = 0$. Then the maximality condition $h_1 + u h_2 \to \max\limits_{u \in \mathbb{R}}$ yields $h_2 \equiv 0$, whence from the Hamiltonian system $h_3 \equiv 0$. Then from the nontriviality condition of PMP $h_1 \ne 0$. The Hamiltonian system yields $h_1 \equiv \const$, thus $u \equiv 0$.

The abnormal extremal trajectory is $q(t) = e^{t f_1} (q_0)$, it is projected to the line $(x, y) = (t, 0)$. It is optimal since in this case $J = 0 = \min$. 

Now consider the normal case $\nu = -1$. The maximality condition $h_1 + u h_2 - \frac{u^2}{2} \to \max\limits_{u \in \mathbb{R}}$ implies $u = h_2$, then the Hamiltonian
system of PMP reads as follows:
\begin{align*}
&\dot{h}_1 = -h_2 h_3,\\
&\dot{h}_2 = h_3,\\
&\dot{h}_3 = h_2 h_1,\\
&\dot{q} = f_1 + h_2 f_2.
\end{align*}
This system has an integral $h^2_1 + h^2_3 \equiv \const$. Introduce the polar coordinates
$$ h_1 = r \cos \alpha, \quad h_3 = r \sin \alpha, $$
then the subsystem of the Hamiltonian system for the vertical variables $h_i$ reads as follows:
\begin{equation*}
 \begin{cases}
\dot{\alpha} = h_2,\\
\dot{h}_2 = r \sin \alpha.
\end{cases}
\end{equation*}
Denote $\beta = \alpha + \pi$, then we get the equation of pendulum:
\begin{equation*}
 \begin{cases}
\dot{\beta} = h_2,\\
\dot{h}_2 = -r \sin \beta.
\end{cases}
\end{equation*}
This equation has an energy integral $E = \frac{h^2_2}{2} - r \cos \beta$.
The ODEs for the horizontal variables are as follows:
\begin{align*}
&\dot{x} = \cos \theta,\\
&\dot{y} = \sin \theta,\\
&\dot{\theta} = h_2 = \dot{\beta}, \quad \textrm{ thus } \theta = \beta - \beta_0.
\end{align*}
The shape of Euler elasticae $(x(t), y(t))$ is determined by values of the energy integral $E \in [-r, + \infty)$ and the corresponding motion of the pendulum.

If $E = -r < 0$, then the pendulum stays at the stable equilibrium $(\beta, h_2) = (0, 0)$, and the elastic curve is a straight line.

If $E \in (-r, r), r > 0$, then the pendulum oscillates, and Euler elasticae have inflection points.

If $E = r > 0$, then the pendulum either stays at the unstable equilibrium $(\beta, h_2) = (\pi, 0)$, or tends to it for an infinite time; correspondingly Euler elasticae are either straight line or a critical curve
without inflection points and with one loop.

If $E > r > 0$, then the pendulum rotates in one or another direction, and elastic curves have no inflection points.

Finally, if $r = 0$, then the pendulum either rotates uniformly or stays fixed (in the absence of gravity); the elastic curves are respectively either circles or the straight line.

Although this problem was first considered in detail by Euler in 1742, optimal synthesis is still unknown.
\paragraph{Rolling of $S^2$ on $\mathbb{R}^2$ without slipping or twisting}
Prove that in this problem the sphere rolls optimally along Euler elasticae in the plane.
\section{Sub-Riemannian geometry}
\subsection{Sub-Riemannian structures and minimizers}
A sub-Riemannian structure on a smooth manifold $M$ is a pair $(\Delta, g)$, where $\Delta$ is a distribution on $M$ and $g$ is an inner product (nondegenerate positive definite quadratic form) on $\Delta$.

A curve $q \in \Lip ([0, t_1], M)$ is called horizontal (admissible) if
$$ \dot{q}(t) \in \Delta_{q(t)} \textrm{ for almost all } t \in [0, t_1]. $$
The length of a horizontal curve is 
$$ l (q (\cdot)) = \int^{t_1}_0 \sqrt{g(\dot{q}, \dot{q})} dt.$$
Sub-Riemannian (Carno-Carath\'eodory) distance between points $q_0, q_1 \in M$ is 
$$d (q_0, q_1) = \inf \{ l (q(\cdot))  \mid  q(\cdot)  \textrm{ horizontal}, ~ q(0) = q_0, ~ q(t_1) = q_1\}.
$$
A horizontal curve $q(\cdot)$ is called a length minimizer if 
$$ l (q(\cdot)) = d (q(0), q(t_1)). $$
Thus length minimizers are solutions to an optimal control problem:
\begin{align*}
&\dot{q} (t) \in \Delta_{q(t)},\\
&q(0) = q_0, \quad q(t_1) = q_1,\\
&l(q(\cdot)) \to \min.
\end{align*}
Suppose that a sub-Riemannian structure $(\Delta, g)$ has a global orthonormal frame $f_1, \dots, f_k \in \Vec(M)$:
$$ \Delta_q = \Span (f_1 (q), \dots, f_k (q)), \quad q \in M, \quad g(f_i, f_j) = \delta_{ij}, \quad i, j = 1, \dots, k. $$
Then the optimal control problem for sub-Riemannian minimizers takes the standard form:
\begin{align}
&\dot{q} = \sum^k_{i=1} u_i f_i (q), \quad q \in M, \quad u = (u_1, \dots, u_k) \in \mathbb{R}^k, \label{qdotui}\\
&q (0) = q_0, \quad q(t_1) = q_1 \nonumber, \\
&l = \int^{t_1}_0 \left(\sum^k_{i=1} u^2_i\right)^{1/2} dt \to \min \label{lmin}.
\end{align}
By Cauchy-Schwarz inequality, the length minimization problem \eqref{lmin} is equivalent to energy minimization problem
$$ J = \frac12 \int^{t_1}_0 \sum^k_{i=1} u^2_i dt \to \min. $$
The energy functional $J$ is more convenient than the length functional $l$ since $J$ is smooth and its minimizers have constant velocity $\sum^k_{i=1} u^2_i \equiv \const$, while $l$ is not smooth when $\sum^k_{i=1} u^2_i = 0$, and its minimizers have arbitrary parameterization.

In the following several examples we present control systems \eqref{qdotui} for the corresponding sub-Riemannian structures.
\paragraph{Heisenberg group}
\begin{align*}
\begin{pmatrix}
\dot{x}\\
\dot{y}\\
\dot{z}\\
\end{pmatrix}
= u_1
\begin{pmatrix}
1 \\
0 \\
- \frac{y}{2}
\end{pmatrix}
+ u_2
\begin{pmatrix}
0 \\
1 \\
\frac{x}{2}
\end{pmatrix}
, \quad q \in \mathbb{R}^3_{x, y, z}, \quad u \in \mathbb{R}^2.
\end{align*}
\paragraph{Group of Euclidean motions of the plane}
\begin{align*}
\begin{pmatrix}
\dot{x}\\
\dot{y}\\
\dot{\theta}\\
\end{pmatrix}
= u_1
\begin{pmatrix}
\cos \theta \\
\sin \theta \\
0
\end{pmatrix}
+ u_2
\begin{pmatrix}
0 \\
0 \\
1
\end{pmatrix}
, \quad q \in \mathbb{R}^2 \times S^1 \cong \SE(2), \quad u \in \mathbb{R}^2.
\end{align*}
\paragraph{Engel group}
\begin{align*}
\begin{pmatrix}
\dot{x}\\
\dot{y}\\
\dot{z}\\
\dot{v}\\
\end{pmatrix}
= u_1
\begin{pmatrix}
1 \\
0 \\
-\frac{y}{2}\\
-\frac{x^2 + y^2}{2}
\end{pmatrix}
+ u_2
\begin{pmatrix}
0 \\
1 \\
\frac{x}{2}\\
0
\end{pmatrix}
, \quad q \in \mathbb{R}^4_{x,y,z,v}, \quad u \in \mathbb{R}^2.
\end{align*}

\subsection{Lie algebra rank condition for SR problems}
The system $\mathcal{F} = \left\{ \sum^k_{i=1} u_i f_i  \mid  u_i \in \mathbb{R} \right\}$ is symmetric, thus $A_q = O_q$ for any $q \in M$. Assume that $M$ and $\mathcal{F}$ are real-analytic, and $M$ is connected. Then the system $\mathcal{F}$ is controllable if it has full rank:
$$ \Lie_q (\mathcal{F}) = \Lie_q (f_1, \dots, f_k) = T_q M, \quad q \in M. $$

\subsection{Filippov theorem for SR problems}
We can equivalently rewrite the optimal control problem for SR minimizers as the following time-optimal problem:
\begin{align*}
&\dot{q} = \sum^k_{i=1} u_i f_i (q), \quad \sum^k_{i=1} u^2_i \le 1, \quad q \in M,\\
&q(0) = q_0, \quad q(t_1) = q_1,\\
&t_1 \to \min.
\end{align*}
The set of control parameters $U = \{ u \in \mathbb{R}^k  \mid \sum^k_{i=1} u^2_i \le 1 \}$ is compact, and the sets of admissible velocities $\left\{ \sum^k_{i=1} u_i f_i (q)  \mid  u \in U\right\} \subset T_q M$ are convex. If we prove an apriori estimate for
the attainable sets $A_{q_0} (\le t_1)$, then Filippov theorem guarantees existence of length minimizers.

\subsection{Pontryagin Maximum Principle for SR problems}
Introduce linear on fibers of $T^* M$ Hamiltonians $h_i (\lambda) = \langle \lambda, f_i \rangle, \quad i = 1, \dots, k$. Then the Hamiltonian of PMP for SR problem takes the form
$$ h^{\nu}_u (\lambda) = \sum^k_{i=1} u_i h_i (\lambda) + \frac{\nu}{2} \sum^k_{i=1} u^2_i. $$
Normal case: $\nu = -1$. The maximality condition $ \sum^k_{i=1} u_i h_i - \frac12  \sum^k_{i=1} u^2_i \to \max\limits_{u_i \in \mathbb{R}}$ yields $u_i = h_i$, then the Hamiltonian takes the form
$$ h^{-1}_u (\lambda) = \frac12  \sum^k_{i=1} h^2_i (\lambda) = H (\lambda).$$
The function $H(\lambda)$ is called the normal maximized Hamiltonian. Since it is smooth, in the normal case extremals satisfy the Hamiltonian system $\dot{\lambda} = \vec{H} (\lambda)$.

Abnormal case: $\nu = 0$. The maximality condition $\sum^k_{i=1} u_i h_i \to \max\limits_{u_i \in \mathbb{R}}$ implies that $h_i (\lambda_t) \equiv 0, \quad i = 1, \dots, k$. Thus abnormal extremals satisfy the conditions:
\begin{align*}
&\dot{\lambda}_t = \sum^k_{i=1} u_i (t) \vec{h}_i (\lambda_t),\\
&h_1 (\lambda_t) = \dots = h_k (\lambda_t) \equiv 0.
\end{align*}
\begin{remark}
Normal length minimizers are projections
of solutions to the Hamiltonian system $\dot{\lambda} = \vec{H}(\lambda)$, thus they are smooth. An important open question of sub-Riemannian geometry is whether abnormal length minimizers are smooth.
\end{remark}
\subsection{Optimality of SR extremal trajectories}
In this subsection we consider normal extremal trajectories $q(t) = \pi (\lambda_t), \dot{\lambda}_t = \vec{H}(\lambda_t )$.

A horizontal curve $q(t)$ is called a SR geodesic if $g (\dot{q}, \dot{q}) \equiv \const$ and short arcs of $q(t)$ are optimal.
\begin{theorem}[Legendre]
Normal extremal trajectories are SR geodesics.
\end{theorem}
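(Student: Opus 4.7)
The constant-speed assertion is immediate from the formalism already set up. Since $\{H, H\} = 0$, the value $H(\lambda_t)$ is conserved along the Hamiltonian flow. Along a normal extremal the PMP maximality condition gives $u_i(t) = h_i(\lambda_t)$, hence
$$ g(\dot q(t), \dot q(t)) = \sum_{i=1}^k u_i(t)^2 = \sum_{i=1}^k h_i(\lambda_t)^2 = 2 H(\lambda_t) \equiv 2 H(\lambda_0) =: c^2. $$

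For the optimality of short arcs my plan is the classical field-of-extremals (Hamilton-Jacobi) technique. The target is to produce, for sufficiently small $\varepsilon > 0$, a smooth function $S \colon U \to \R$ defined on a tubular neighbourhood $U$ of $q([0, \varepsilon])$ that satisfies
(i) $H(dS_q) \equiv c^2/2$ on $U$, and
(ii) $dS_{q(t)} = \lambda_t$ for $t \in [0, \varepsilon]$.
Given such an $S$, for any admissible competitor $\tilde q \colon [0, \varepsilon] \to U$ with $\tilde q(0) = q(0)$, $\tilde q(\varepsilon) = q(\varepsilon)$ and $\dot{\tilde q} = \sum \tilde u_i f_i$, the fundamental theorem of calculus combined with the pointwise inequality $\sum \tilde u_i h_i \le \tfrac12 \sum \tilde u_i^2 + \tfrac12 \sum h_i^2$ yields
\begin{align*}
S(q(\varepsilon)) - S(q(0))
&= \int_0^\varepsilon \bigl\langle dS_{\tilde q(t)}, \dot{\tilde q}(t) \bigr\rangle\, dt \\
&= \int_0^\varepsilon \sum_{i=1}^k \tilde u_i(t)\, h_i(dS_{\tilde q(t)})\, dt \\
&\le J[\tilde q] + \tfrac{c^2 \varepsilon}{2},
\end{align*}
where $J[\tilde q] = \tfrac12 \int_0^\varepsilon \sum \tilde u_i^2\, dt$ is the energy. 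Both inequalities become equalities along $q$ itself, by (ii) and $u_i = h_i(\lambda_t)$, so $J[q] = S(q(\varepsilon)) - S(q(0)) - c^2 \varepsilon / 2$. Hence $J[\tilde q] \ge J[q]$, and combining this with the constant-speed property of $q$ via Cauchy--Schwarz upgrades the energy minimality to length minimality.

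The real work is the construction of $S$, i.e., the Cauchy problem for the Hamilton-Jacobi equation $H(dS) = c^2/2$ with initial datum $dS(q_0) = \lambda_0$. I would solve it by the method of characteristics. Pick a codimension-one submanifold $\Sigma \subset M$ through $q_0$ transverse to $\dot q(0)$, and an $(n-1)$-dimensional isotropic submanifold $\mathcal L_0 \subset \{H = c^2/2\}$ through $\lambda_0$ whose projection is a diffeomorphism onto $\Sigma$ and which is transverse to $\vec H$ at $\lambda_0$. Set $\mathcal L := \bigcup_{|t| < \delta} e^{t \vec H}(\mathcal L_0)$; this is an $n$-dimensional submanifold of $T^*M$ containing the reference curve $\lambda_t$. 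It is Lagrangian: it lies in the coisotropic hypersurface $\{H = c^2/2\}$ (preserved by $\vec H$); isotropy of each slice $e^{t\vec H}(\mathcal L_0)$ is preserved since $e^{t\vec H}$ is a symplectomorphism; and $\sigma(\vec H, v) = -dH(v) = 0$ for every $v$ tangent to $\{H = c^2/2\}$, so the $\vec H$-direction pairs trivially with all of $T\mathcal L$. The main obstacle, which is the reason the statement is only local in $t$, is to verify that $\pi|_{\mathcal L}$ is a diffeomorphism onto a neighbourhood $U$ of $q([0,\varepsilon])$; this is precisely the absence-of-conjugate-points condition and follows, for $\varepsilon$ small, from the implicit function theorem applied at $t = 0$, where $\pi_*$ is an isomorphism on $T_{\lambda_0}\mathcal L_0 \oplus \R \vec H(\lambda_0)$ by the choice of $\Sigma$ and $\mathcal L_0$. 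Once $\pi|_\mathcal L$ is a diffeomorphism, the Lagrangian property forces $\mathcal L$ to coincide with the graph of $dS$ for a smooth primitive $S \colon U \to \R$, and properties (i), (ii) follow from $\mathcal L \subset \{H = c^2/2\}$ and $\lambda_t \in \mathcal L$.
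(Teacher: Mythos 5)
The paper states Legendre's theorem without proof, so there is no in-paper argument to compare against; I evaluate your plan on its own terms. The constant-speed part is correct and cheap (conservation of $H$ along $\vec{H}$, $u_i = h_i(\lambda_t)$, $g(\dot q,\dot q)=2H$), and for local optimality you propose the standard Hamilton--Jacobi calibration: build a Lagrangian flow-out $\mathcal L$ of an isotropic initial manifold $\mathcal L_0 \subset \{H=c^2/2\}$, invoke the absence of conjugate points near $t=0$ to write $\mathcal L$ as the graph of $dS$, and then dominate every admissible competitor through the Young inequality $\sum\tilde u_i h_i(dS)\le \tfrac12\sum\tilde u_i^2 + H(dS)$. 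The structure, including the verification that $\mathcal L$ is Lagrangian because the $\vec H$-direction pairs trivially against anything tangent to $\{H=\const\}$, is correct; this is the classical proof. (You should dispose separately of the degenerate case $c=0$, where the extremal is constant and the assertion is trivial, since your construction of $S$ uses $c\ne 0$.)

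Two steps are stated too loosely. First, passing from energy to length minimality is not a one-line Cauchy--Schwarz deduction: from $J[\tilde q]\ge J[q]$, $l(\tilde q)^2\le 2\varepsilon J[\tilde q]$ and $l(q)^2=2\varepsilon J[q]$ you only get $l(q)^2\le 2\varepsilon J[\tilde q]$, which does not yield $l(\tilde q)\ge l(q)$. The correct route is to reparametrize the competitor $\tilde q$ to constant speed on $[0,\varepsilon]$ (deleting zero-speed stretches if needed); the reparametrized curve has energy $l(\tilde q)^2/(2\varepsilon)$, and energy minimality of $q$ then gives $l(\tilde q)\ge l(q)$. Second, your calibration inequality only controls competitors whose image stays inside the tubular neighbourhood $U$ on which $S$ is defined, so as written you only get optimality among nearby competitors. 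To conclude $l(q|_{[0,\varepsilon]})=d(q(0),q(\varepsilon))$ you need the extra input that small sub-Riemannian balls sit inside $U$ (e.g.\ the ball-box theorem, or any comparison of the Carnot--Carath\'eodory topology with the manifold topology); then, after shrinking $\varepsilon$ so that $l(q|_{[0,\varepsilon]})<\delta$ with $B_{SR}(q_0,\delta)\subset U$, every competitor of smaller length automatically remains in $U$. With those two points filled in the proof is complete.
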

\paragraph{Example: Geodesics on $S^2$}
Consider the standard sphere $S^2 \subset \mathbb{R}^3$
with Riemannian metric induced by the Euclidean metric of $\mathbb{R}^3$. Geodesics starting from the North pole $N \in S^2$ are great circles passing through $N$. Such geodesics are optimal up to the South pole $S \in S^2$. Variation of geodesics passing through $N$ yields the fixed point $S$, thus $S$ is a conjugate point to $N$. On the other hand, $S$ is the intersection point of different  geodesics of the same length starting at $N$, thus $S$ is a Maxwell point. In this example, conjugate point coincides with Maxwell point due to the one-parameter group of symmetries (rotations of $S^2$ around the line $NS$). In order to separate these points, one should destroy the rotational symmetry as in the following example.
\paragraph{Example: Geodesics on ellipsoid}
Consider a three-axes ellipsoid  with the Riemannian metric induced by the Euclidean metric of the ambient $\mathbb{R}^3$. Consider the family of geodesics on the ellipsoid starting from a vertex $N$, and let us look at this family from the opposite vertex $S$. The family of geodesics has an envelope --- astroid centered at $S$. Each point of the astroid is a conjugate point; at such points the geodesics lose their local optimality. On the other hand, there is a segment joining a pair of opposite vertices of the astroid, where pairs of geodesics of the same length meet one another. This segment (except its vertices) consists of Maxwell points. At such points geodesics on the ellipsoid
lose their global optimality.

We will clarify below the notions and facts that appeared in this example.

\bigskip

Consider the normal Hamiltonian system of PMP $\dot{\lambda}_t = \vec{H}(\lambda_t)$. The Hamiltonian $H$ is an integral of this system. We can assume that $H(\lambda_t) \equiv \frac12$, this corresponds to arclength parameterization of normal geodesics. Denote the cylinder $C = T_{q_0}^* M \cap \{ H = \frac12 \}$ and define the exponential mapping
\begin{align*}
&\Exp \colon C \times \mathbb{R}_+ \to M,\\
&\Exp (\lambda_0, t) = \pi \circ e^{t \vec{H}} (\lambda_0) = q(t).
\end{align*}
A point $q_1 = \Exp(\lambda_0, t_1)$ is a conjugate point along the geodesic $q(t) = \Exp(\lambda_0, t)$ if it is a critical value of $\Exp$: $\Exp_{* (\lambda_0, t_1)}$ is degenerate.
\begin{theorem}[Jacobi]
Let a normal geodesic $q(t)$ be a projection of 
a unique, up to a scalar multiple, extremal. Then $q(t)$ loses its local optimality at the first conjugate point.
\end{theorem}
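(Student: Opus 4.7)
The plan is to split the statement into two complementary claims: (a) for every $\tau < t_1$ the restriction $q|_{[0,\tau]}$ is locally optimal, i.e.\ length-minimizing among nearby horizontal curves with the same endpoints; (b) for every $\eta > 0$ small enough the arc $q|_{[0, t_1 + \eta]}$ is \emph{not} locally optimal. Together (a) and (b) say precisely that the geodesic loses local optimality at the first conjugate time.

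For (a) I would build a local field of normal extremals around $q$ and use the Hilbert invariant integral. Since $\Exp_{*(\lambda_0,s)}$ is nondegenerate for all $s \in (0, t_1)$, the image of a small cylinder $V \times [0, t_1-\delta] \subset C \times \R_+$ under $(\lambda, s) \mapsto e^{s\vec H}(\lambda)$ is an embedded Lagrangian submanifold $\mathcal L \subset T^*M$ on which $H \equiv 1/2$, and the projection $\pi|_{\mathcal L} \to U$ onto a tubular neighborhood $U$ of $q([0, t_1-\delta])$ is a diffeomorphism. This gives a smooth section $\Lambda\colon U \to T^*M$ with $\Lambda(U) = \mathcal L$, and the pullback of the Liouville form $\Lambda^* s$ is closed on the simply connected $U$, hence $\Lambda^* s = dS$ for a smooth function $S$. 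For any horizontal curve $\tilde q\colon[0,\tau]\to U$ with $\tilde q(0)=q_0$, $\tilde q(\tau)=q(\tau)$ the maximality condition in PMP for the normal case gives $\langle \Lambda(\tilde q), \dot{\tilde q}\rangle \le \frac12 \, g(\dot{\tilde q}, \dot{\tilde q}) + \frac12$ pointwise a.e., with equality iff $\tilde q$ is the projection of a trajectory of $\vec H$ inside $\mathcal L$. Integrating and using $\int_0^\tau dS = S(q(\tau)) - S(q_0)$, one gets that the energy of $\tilde q$ is bounded below by that of $q|_{[0,\tau]}$, with equality only when $\tilde q = q|_{[0,\tau]}$. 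Local optimality on $[0,\tau]$ follows by Cauchy-Schwarz.

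For (b) I would use the degeneracy of $\Exp_{*(\lambda_0, t_1)}$ to produce a variation with nonpositive second-order cost and negative first-order room. Pick $0 \ne (\xi, \tau_0) \in \Ker \Exp_{*(\lambda_0,t_1)}$. The family $q_\varepsilon(t) = \Exp(\lambda_0 + \varepsilon \xi, t)$ consists of normal geodesics with the same energy, and $q_\varepsilon(t_1) = q(t_1) + O(\varepsilon^2)$. Concatenating $q_\varepsilon|_{[0, t_1]}$ with a short horizontal correction of length $O(\varepsilon^2)$ to reach $q(t_1 + \eta)$, and comparing with $q|_{[0, t_1 + \eta]}$, one obtains for sufficiently small $\varepsilon$ and $\eta$ a competitor of strictly smaller length. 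Equivalently, this is the Morse-theoretic statement: the index of the second variation of the energy functional (restricted to horizontal variations with fixed endpoints) on $[0, t_1 + \eta]$ is strictly positive, because a nontrivial Jacobi field along $q$ vanishes at $0$ and $t_1$, so $q|_{[0, t_1+\eta]}$ cannot be a local minimizer.

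The main obstacle is (b) rather than (a), and it is here that the hypothesis of uniqueness of the extremal lift enters. Without it, the trajectory $q$ could admit an abnormal lift in addition to the normal one, and the second variation of the energy under horizontal variations with fixed endpoints would have to be computed on the full kernel of the endpoint map rather than just on the Jacobi equation of $\vec H$; abnormal directions could then in principle compensate and prevent the construction of a strictly shorter competitor. The assumption that $\lambda_t$ is the unique extremal up to scaling means that $q$ is a regular point of the sub-Riemannian endpoint map, so the second-variation analysis reduces to the standard Jacobi/Morse index calculation associated with $\vec H$, and the Morse index theorem supplies the required descent direction past the first conjugate time.
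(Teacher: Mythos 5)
The paper does not prove this theorem; it is stated without proof (compare the parenthetical ``This proposition is proved in \cite{notes}'' for the analytic Orbit corollary — Jacobi's theorem is likewise left as a citation-level fact). So there is no paper argument to compare against, and I will evaluate your sketch on its own.

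Your decomposition into (a) optimality strictly before the first conjugate time and (b) loss of optimality strictly after is the right one, and the two tools you invoke — a field of extremals / Hilbert invariant integral for (a), and the Morse index of the second variation for (b) — are the standard route. Two points deserve caution. In (a), nondegeneracy of $\Exp_{*(\lambda_0,s)}$ only gives that $\pi|_{\mathcal L}$ is a \emph{local} diffeomorphism near each $e^{s\vec H}(\lambda_0)$; to obtain a single-valued section $\Lambda$ over a tube around $q([0,\tau])$ you must rule out that the tube folds over itself (e.g.\ when $q$ self-intersects). This is handled by a compactness/continuity argument on a sufficiently thin tube, but it is a genuine step, not automatic. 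In (b), the heuristic ``concatenate $q_\varepsilon|_{[0,t_1]}$ with a short horizontal correction of length $O(\varepsilon^2)$'' is unreliable in the sub-Riemannian setting: the sub-Riemannian distance between two points at Euclidean distance $O(\varepsilon^2)$ is in general only $O(\varepsilon^{2/r})$ where $r$ is the step of the distribution, which can swamp the putative savings. The Morse-theoretic version of (b) you state afterwards — extend the Jacobi field by zero past $t_1$, observe the second variation on $[0,t_1+\eta]$ acquires a nontrivial kernel direction and then a negative direction after smoothing — is the argument that actually closes the gap, so I would drop the concatenation heuristic and keep only the index computation. Finally, your identification of where the uniqueness-of-lift hypothesis enters is correct: it guarantees the geodesic is not abnormal, so the endpoint map is a submersion along it and the second variation reduces to the Jacobi equation for $\vec H$, which is exactly what the Morse index theorem needs.
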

A point $q_1 = \Exp(\lambda_0, t_1)$ is conjugate iff the Jacobian $J (t_1) = \det (\frac{\partial \Exp}{\partial (\lambda_0, t)}) |_{t = t_1} = 0$. 

A point $q_1 = q(t_1)$ is a Maxwell point along a geodesic $q(t) = \Exp(\lambda_0, t)$ iff there exists another geodesic $\tilde{q}(t) = \Exp(\tilde{\lambda}_0, t) \not\equiv q(t)$ such that $q_1 = \tilde{q} (t_1)$.
\begin{lemma}
If $H$ is analytic, then a normal geodesic cannot be optimal after a Maxwell point.
\end{lemma}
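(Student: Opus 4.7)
The plan is to argue by contradiction: assume the normal geodesic $q(t)=\Exp(\lambda_0,t)$ remains optimal on some interval $[0,t_1+\varepsilon]$ with $\varepsilon>0$, and derive a contradiction with the existence of the distinct geodesic $\tilde{q}(t)=\Exp(\tilde\lambda_0,t)$ meeting $q$ at the Maxwell point $q_1=q(t_1)=\tilde q(t_1)$.

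\textbf{Step 1: concatenate.} Both $q$ and $\tilde q$ lie on the level set $\{H=1/2\}$, so both are parameterized by arclength; in particular $l(q|_{[0,t_1]})=l(\tilde q|_{[0,t_1]})=t_1$. I would form the horizontal curve $\gamma\colon[0,t_1+\varepsilon]\to M$ defined by $\gamma(t)=\tilde q(t)$ for $t\in[0,t_1]$ and $\gamma(t)=q(t)$ for $t\in[t_1,t_1+\varepsilon]$. Then $l(\gamma)=t_1+\varepsilon=l(q|_{[0,t_1+\varepsilon]})$, so if $q|_{[0,t_1+\varepsilon]}$ is a length minimizer, so is $\gamma$.

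\textbf{Step 2: lift $\gamma$ via PMP.} As a length minimizer, $\gamma$ satisfies PMP, so it admits an extremal lift $\Lambda\colon[0,t_1+\varepsilon]\to T^*M$. Restricted to the open interval $(t_1,t_1+\varepsilon)$ the curve $\gamma$ coincides with the smooth normal geodesic $q$, and I would like to conclude that $\Lambda$ on this interval is (a reparameterization of) the normal lift $t\mapsto e^{t\vec H}(\lambda_0)$. This is plausible because $q$ is normal and the control determined by the maximality condition $u_i=h_i$ makes the Hamiltonian trajectory analytic; I would check that on the overlap interval $\Lambda_t$ must satisfy $\dot\Lambda_t=\vec H(\Lambda_t)$.

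\textbf{Step 3: analytic continuation.} Once $\Lambda_t=e^{t\vec H}(\lambda_0)$ on $(t_1,t_1+\varepsilon)$, I would invoke analyticity of $H$ (so of $\vec H$) together with uniqueness and reversibility of solutions to analytic Hamiltonian ODEs to extend this identity backwards, obtaining $\Lambda_t=e^{t\vec H}(\lambda_0)$ on all of $[0,t_1+\varepsilon]$. Projecting, $\gamma(t)=q(t)$ for all $t$, which contradicts $\gamma|_{[0,t_1]}=\tilde q|_{[0,t_1]}\not\equiv q|_{[0,t_1]}$.

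\textbf{Main obstacle.} The delicate point is Step 2, specifically handling the possibility that the extremal lift of $\gamma$ is \emph{abnormal} on $[0,t_1]$ rather than normal, so that the smooth Hamiltonian system $\dot\lambda=\vec H(\lambda)$ does not directly apply there. To deal with this I would work locally near the junction $t=t_1$: on the interval where $\gamma$ coincides with the normal geodesic $q$ the maximality condition forces $\nu=-1$ and $u_i(t)=h_i(\Lambda_t)$, so the lift is necessarily of the form $e^{t\vec H}(\Lambda_{t_1})$; by backward uniqueness for the analytic flow this determines $\Lambda$, and hence $\gamma$, on a full neighborhood of $t_1$. A limit/connectedness argument (the set of times where $\gamma$ agrees with $q$ is nonempty, open, and closed in the analytic category) then yields the contradiction throughout $[0,t_1+\varepsilon]$. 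An alternative, which avoids the normal/abnormal dichotomy, is to argue directly that a minimizer cannot have a corner at $t_1$: if $\dot{\tilde q}(t_1^-)\neq\dot q(t_1^+)$, a standard cut-and-shorten argument in a small SR ball around $q_1$ strictly shortens $\gamma$, again contradicting its minimality; and if $\dot{\tilde q}(t_1^-)=\dot q(t_1^+)$, uniqueness for $\dot\lambda=\vec H(\lambda)$ forces $\tilde q\equiv q$, contradicting the Maxwell hypothesis.
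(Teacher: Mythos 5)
Your proposal follows essentially the same approach as the paper: concatenate $\tilde{q}|_{[0,t_1]}$ with $q|_{[t_1,t_1+\varepsilon]}$, observe that the concatenation is a length minimizer (same length as $q|_{[0,t_1+\varepsilon]}$), and use analyticity of normal geodesics to force $\tilde{q}\equiv q$, a contradiction. Your Steps 2--3 and the ``Main obstacle'' paragraph merely spell out the justification for why the concatenated minimizer is itself an analytic normal geodesic, a point the paper's proof compresses into the bare assertion that the two geodesics are analytic and agree on a subinterval.
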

\begin{proof}
Let $q_1 = q(t_1)$ be a Maxwell point along a geodesic $q(t) = \Exp(\lambda_0, t)$, and let $\tilde{q}(t) = \Exp(\tilde{\lambda}_0, t) \not\equiv q(t)$ be another
geodesic with $\tilde{q} (t_1) = q_1$. If $q(t), ~ t \in [0, t_1 + \varepsilon],~\varepsilon > 0$, is optimal, then the following curve is optimal as well:
\begin{align*}
\bar{q}(t) = 
\begin{cases}
\tilde{q} (t), \quad t \in [0, t_1],\\
q(t), \quad t \in [t_1, t_1 + \varepsilon].
\end{cases}
\end{align*}
The geodesics $q(t)$ and $\bar{q}(t)$ coincide at the segment $t \in [t_1, t_1 + \varepsilon]$. Since they are analytic, they should coincide at the whole domain $t \in [0, t_1 + \varepsilon]$. Thus $q(t) \equiv \tilde{q} (t),~ t \in [0, t_1]$, a contradiction.
\end{proof}
\begin{theorem}
Let $q(t)$ be a normal geodesic that is a projection of a unique, up to a scalar multiple, extremal. Then $q(t)$ loses its global optimality either at the first Maxwell point or at the first conjugate point (at the first point of these two points).
\end{theorem}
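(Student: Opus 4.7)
The plan is to derive this theorem as a more or less direct corollary of the two preceding results: the Jacobi theorem, which controls loss of \emph{local} optimality at the first conjugate point under the uniqueness-of-extremal hypothesis, and the Maxwell Lemma (stated just above), which under analyticity of $H$ rules out global optimality past any Maxwell point. Let $t_{\text{conj}}$ denote the first conjugate time along $q(t)=\Exp(\lambda_0,t)$, defined as the first positive zero of the analytic Jacobian $J(t)=\det\bigl(\partial\Exp/\partial(\lambda_0,t)\bigr)$, and let $t_{\text{Max}}$ denote the first Maxwell time, i.e.\ the infimum of times $t>0$ for which there is a distinct geodesic $\tilde q(\cdot)=\Exp(\tilde\lambda_0,\cdot)\not\equiv q$ with $\tilde q(t)=q(t)$. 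Set $t^*=\min(t_{\text{conj}},t_{\text{Max}})$. I want to prove that $q$ is not a global length minimizer on $[0,t]$ for any $t>t^*$.

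The argument then splits into two cases. If $t^*=t_{\text{Max}}$, the Maxwell Lemma applies verbatim and gives loss of global optimality past $t_{\text{Max}}$. If instead $t^*=t_{\text{conj}}<t_{\text{Max}}$, the Jacobi theorem gives loss of \emph{local} optimality past $t_{\text{conj}}$: there exists a sequence of admissible horizontal curves $C^0$-close to $q|_{[0,t_{\text{conj}}+\varepsilon]}$, with the same endpoints, and strictly shorter length. Since global length minimizers are in particular local length minimizers (a globally optimal curve beats every admissible competitor with the same endpoints, in any $C^0$-neighborhood), loss of local optimality forces loss of global optimality. Combining the two cases yields the theorem.

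The only subtlety in the write-up is to make sure the ``first'' time $t^*$ is meaningful. For $t_{\text{conj}}$ this is clear since $J(\cdot)$ is continuous and its zero set, if nonempty, has a minimum on any compact interval. For $t_{\text{Max}}$ one should remark that the infimum need not itself be a Maxwell point, but this is harmless: if $t_{\text{Max}}$ is the infimum, then for every $t>t_{\text{Max}}$ there is some Maxwell time $t'\in(t_{\text{Max}},t]$, and applying the Maxwell Lemma at $t'$ with a small forward extension shows that $q|_{[0,t]}$ cannot be optimal. The main (rather minor) obstacle is therefore just this bookkeeping, plus the implicit assumption that the sub-Riemannian structure is analytic, which is what makes $H$ analytic and underlies both the Jacobi theorem (applicability of analytic continuation arguments in its proof) and the Maxwell Lemma (invoked in its statement). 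No nontrivial new calculation is required beyond these two appeals.
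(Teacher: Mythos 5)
The paper states this theorem without proof, so your proposal has to be assessed on its own merits rather than against an authorial argument.

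Your proof establishes only half of the claim. The statement that $q(t)$ ``loses its global optimality \emph{at} the first Maxwell or first conjugate point (whichever comes first)'' identifies $t^* = \min(t_{\mathrm{conj}}, t_{\mathrm{Max}})$ as the cut time, which carries two assertions: (i) $q|_{[0,t]}$ is not optimal for any $t > t^*$, and (ii) $q|_{[0,t]}$ \emph{is} optimal for every $t \le t^*$. Your argument proves (i) cleanly --- combining the Maxwell lemma (global loss past a Maxwell time, with the correct bookkeeping for the infimum) and the Jacobi theorem (local, hence global, loss past the first conjugate time). But it says nothing about (ii), and (ii) is the genuinely nontrivial direction, not a formality. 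The standard route to (ii) goes roughly as follows: suppose $q|_{[0,\tau]}$ fails to be optimal for some $\tau < t^*$; by Filippov-type compactness, take minimizers $\tilde q_t$ from $q_0$ to $q(t)$ for $t$ slightly above the cut time and pass to a limit $\tilde q$ reaching $q(\tau)$ with the same length $\tau$; if $\tilde q \ne q|_{[0,\tau]}$ one has a Maxwell point before $t_{\mathrm{Max}}$, a contradiction; if $\tilde q = q|_{[0,\tau]}$ one must extract from the converging family of initial covectors a degeneracy of $\Exp$ at $(\lambda_0,\tau)$, producing a conjugate point before $t_{\mathrm{conj}}$, again a contradiction. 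None of this is implied by the Jacobi theorem or the Maxwell lemma as stated, so a complete proof needs this additional argument (and, in the second subcase, the uniqueness-of-extremal hypothesis becomes essential to identify the limit with $q$). As written, your proposal proves ``the cut time is no later than $t^*$'' rather than ``the cut time equals $t^*$.''

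Two smaller observations. First, you are right that analyticity of the structure is an implicit running hypothesis here (it is invoked explicitly in the Maxwell lemma), and it is good that you flag it. Second, your handling of the case where the infimum of Maxwell times is not attained is correct and worth keeping: for any $t > t_{\mathrm{Max}}$ one picks a Maxwell time $t' \in (t_{\mathrm{Max}}, t)$ and applies the lemma with forward extension inside $[0,t]$.
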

\subsection{Symmetry method for construction of optimal synthesis}
We describe a general method for construction of optimal synthesis for sub-Riemannian problems with a big group of symmetries (e.g. for left-invariant SR problems on Lie groups). We assume that for any $q_1 \in M$ there exists a length minimizer $q(t)$ that connects $q_0$ and $q_1$.

Suppose for simplicity that there are no abnormal geodesics. Thus all geodesics are parameterized by the normal exponential mapping
$$ \Exp \colon N \to M, \quad N = C \times \mathbb{R}_+. $$
If this mapping is bijective, then any point $q_1 \in M$ is connected with $q_0$ by a unique geodesic $q(t)$, and by virtue of existence of length minimizers this geodesic is optimal.

But typically the exponential mapping is not bijective due to Maxwell points.
Denote by $t_{\max} (\lambda_0) \in (0, +\infty]$ the first Maxwell time along the geodesic $\Exp(\lambda_0, t)$. Consider the Maxwell set in the image of the exponential mapping
$$ \Max = \{ \Exp(\lambda_0, t_{\max}(\lambda_0))  \mid \lambda_0 \in C \},$$
and introduce the restricted exponential mapping
\begin{align*}
&\Exp \colon \widetilde{N} \to \widetilde{M},\\
&\widetilde{N} = \{ (\lambda_0, t) \in N  \mid  t < t_{\max}(\lambda_0) \},\\
&\widetilde{M} = M \backslash \cl(\Max).
\end{align*}
This mapping may well be bijective, and if this is the case, then any point $q_1 \in \widetilde{M}$ is connected with $q_0$ by a unique candidate optimal geodesic; by virtue of existence, this geodesic is optimal.

The bijective property of the restricted exponential mapping can often be proved via the 
following theorem.
\begin{theorem}[Hadamard]
Let $F \colon X \to Y$ be a smooth mapping between smooth manifolds such that the following properties fold:
\begin{itemize}
\item $\dim X = \dim Y$,
\item $X, Y$ are connected and $Y$ is simply connected,
\item $F$ is nondegenerate,
\item $F$ is proper ($F^{-1}(K)$ is compact for compact $K \subset Y)$.
\end{itemize}
Then $F$ is a diffeomorphism.
\end{theorem}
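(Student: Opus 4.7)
The plan is to show that $F$ is a covering map between connected manifolds whose base is simply connected, so that $F$ must be a homeomorphism; combined with the local-diffeomorphism property this will upgrade to a global diffeomorphism.

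First I would observe that nondegeneracy of $F$ together with $\dim X = \dim Y$ means $F_{*x}\colon T_x X \to T_{F(x)} Y$ is a linear isomorphism for every $x \in X$, so by the inverse function theorem $F$ is a local diffeomorphism; in particular $F$ is an open map. Next I would establish surjectivity by a connectedness argument: $F(X)$ is open, and it is closed because if $y_n = F(x_n) \to y$ in $Y$, then $K = \{y_n \colon n \in \mathbb{N}\} \cup \{y\}$ is compact, so by properness $F^{-1}(K)$ is compact and contains $\{x_n\}$; passing to a convergent subsequence $x_{n_k} \to x$ gives $F(x) = y \in F(X)$. Since $Y$ is connected, $F(X) = Y$.

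The core step is to show that $F$ is a covering map. Fix $y \in Y$. The fibre $F^{-1}(y)$ is discrete (since $F$ is a local diffeomorphism) and compact (by properness), hence finite, say $F^{-1}(y) = \{x_1, \dots, x_N\}$. Choose pairwise disjoint open neighborhoods $U_i \ni x_i$ such that $F|_{U_i} \colon U_i \to F(U_i)$ is a diffeomorphism. I claim there is a neighborhood $V$ of $y$ contained in $\bigcap_i F(U_i)$ and satisfying $F^{-1}(V) \subset \bigcup_i U_i$; then $V_i := U_i \cap F^{-1}(V)$ presents $V$ as an evenly covered neighborhood. The first inclusion is clear since each $F(U_i)$ is open, and the second is precisely where properness is used essentially. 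If no such $V$ existed, pick a countable neighborhood basis $V_n \downarrow \{y\}$ (available since $Y$ is first-countable as a manifold) and points $z_n \in F^{-1}(V_n) \setminus \bigcup_i U_i$. Then $F(z_n) \to y$, so $\{F(z_n)\} \cup \{y\}$ is compact, hence by properness the sequence $\{z_n\}$ lies in a compact subset of $X$. A convergent subsequence $z_{n_k} \to z$ satisfies $F(z) = y$, so $z = x_i$ for some $i$, whence $z_{n_k} \in U_i$ for large $k$, contradicting $z_n \notin \bigcup_i U_i$.

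Once $F$ is known to be a covering map, simple-connectedness of $Y$ forces the number of sheets to equal $1$: connected coverings of a simply connected space correspond to subgroups of $\pi_1(Y) = \{1\}$, of which there is only the trivial one. Hence $F$ is a homeomorphism, and combined with the local-diffeomorphism property it is a global diffeomorphism. The main obstacle is the evenly covered property in the core step; once that shrinking argument is in place, the remainder reduces to standard covering space theory.
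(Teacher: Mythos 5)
Your proposal is correct, but there is nothing in the paper to compare it with: the lecture notes state Hadamard's theorem without proof, invoking it only as a tool to establish bijectivity of the restricted exponential mapping. Your argument is the standard proof of the Hadamard--Cacciopoli global diffeomorphism theorem, and every step is sound: the inverse function theorem gives a local diffeomorphism, properness makes the (open) image closed and hence all of the connected $Y$, the fibres are finite, and the shrinking argument (where you should, strictly speaking, replace each basis element $V_n$ by $V_n \cap \bigcap_i F(U_i)$ before picking $z_n$ --- a trivial adjustment) shows $F$ is a covering map; simple connectedness of $Y$ then forces one sheet, and a bijective local diffeomorphism is a diffeomorphism.
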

Usually it is hard to describe all Maxwell points (and respectively to describe the first of them), but one can do this for a group of symmetries $G$ of the exponential mapping. A pair of mappings $\varepsilon \colon N \to N$, $\sigma \colon M \to M$ is called
a symmetry of the exponential mapping if $\sigma \circ \Exp = \Exp \circ \varepsilon$. Suppose that there is a group $G$ of symmetries of the exponential mapping consisting of reflections $\varepsilon \colon N \to N$ and $\sigma \colon M \to M$. If a point $q_1 = \Exp (\lambda_0, t)$ is a fixed point for some $\sigma \in G$ such that $(\lambda_0, t)$ is not a fixed point for the corresponding $\varepsilon \in G$, then $q_1$ is a Maxwell point. In such a way one can describe the Maxwell points corresponding to the group of symmetries $G$, and consequently describe the first Maxwell time corresponding to the group $G$, $t^G_{\max} \colon C \to (0, +\infty]$. Then one can apply the above procedure with the restricted exponential mapping, replacing $t_{\max}(\lambda_0)$ by $t^G_{\max}(\lambda_0)$ . If the group $G$ is big enough,
one can often prove that the restricted exponential mapping is bijective, and thus to construct optimal synthesis.
\subsection{Sub-Riemannian problem on the Heisenberg group}
The problem is stated as follows:
\begin{align*}
&\dot{q} = u_1 f_1 (q) + u_2 f_2 (q), \quad q \in M = \mathbb{R}^3_{x,y,z}, \quad u \in \mathbb{R}^2,\\
&q(0) = q_0 = (0, 0, 0),~ q(t_1) = q_1,\\
&J = \frac12 \int^{t_1}_0 (u^2_1 + u^2_2) dt \to \min,\\
&f_1 = \frac{\partial}{\partial x} - \frac{y}{2} \frac{\partial}{\partial z}, \quad f_2 = \frac{\partial}{\partial y} + \frac{x}{2} \frac{\partial}{\partial z}.
\end{align*}
We have $[f_1, f_2] = f_3 = \frac{\partial}{\partial z}$. The system has full rank, thus it is completely controllable.

The right-hand side satisfies the bound
$$ | u_1 f_1 (q) + u_2 f_2 (q)| \le C(1 + |q|), \quad q \in M, \quad u^2_1 + u^2_2 \le 1. $$
Thus Filippov theorem gives existence
of optimal controls.

Introduce linear on fibers of $T^* M$ Hamiltonians:
$$ h_i (\lambda) = \langle \lambda, f_i \rangle, \quad i = 1, 2, 3, \quad \lambda \in T^* M. $$
Abnormal case: abnormal extremals satisfy the Hamiltonian system $\dot{\lambda} = u_1 \vec{h}_1 (\lambda) + u_2 \vec{h}_2 (\lambda)$, in coordinates:
\begin{align*}
&\dot{h}_1 = -u_2 h_3,\\
&\dot{h}_2 = u_1 h_3,\\
&\dot{h}_3 = 0,\\
&\dot{q} = u_1 f_1 + u_2 f_2,
\end{align*}
plus the identities 
$$ h_1 (\lambda_t) = h_2 (\lambda_t) \equiv 0. $$
Thus $h_3 (\lambda_t) \ne 0$, and the first two equations of the Hamiltonian system yield $u_1 (t) = u_2 (t) \equiv 0$. Thus abnormal trajectories are constant.

Normal case: normal extremals satisfy the Hamiltonian system $\dot{\lambda} = \vec{H}(\lambda)$, in coordinates:
\begin{align*}
&\dot{h}_1 = -h_2 h_3,\\
&\dot{h}_2 = h_1 h_3,\\
&\dot{h}_3 = 0,\\
&\dot{q} = h_1 f_1 + h_2 f_2.
\end{align*}
On the level surface $H = \frac12 (h^2_1 + h^2_2) \equiv \frac12$, we introduce the coordinate $\theta$:
$$ h_1 = \cos \theta, \quad h_2 = \sin \theta. $$
Then the normal Hamiltonian system takes the form:
\begin{align*}
&\dot{\theta} = h_3,\\
&\dot{h}_3 = 0,\\
&\dot{x} = \cos \theta,\\
&\dot{y} = \sin \theta,\\
&\dot{z} = - \frac{y}{2} \cos \theta + \frac{x}{2} \sin \theta,\\
&(x, y, z)(0) = (0, 0, 0).
\end{align*}
\begin{enumerate}
\item If $h_3 = 0$, then
\begin{align*}
&\theta \equiv \theta_0,\\
&x = t\cos \theta_0,\\
&y = t\sin \theta_0,\\
&z = 0.
\end{align*}
\item If $h_3 \ne 0$, then
\begin{align*}
&\theta = \theta_0 + h_3 t,\\
&x = (\sin (\theta_0 + h_3 t) - \sin \theta_0) / h_3,\\
&y = (\cos \theta_0 - \cos (\theta_0 + h_3 t)) / h_3,\\
&z = (h_3 t - \sin h_3 t) / h^2_3.
\end{align*}
\end{enumerate}
If  $h_3 = 0$, then the geodesic $q(t)$ is optimal for $t \in [0, + \infty)$ since its projection to the plane $(x, y)$ is a line, and the minimized functional is the Euclidean length in $(x, y)$. 

In the case $h_3 \ne 0$ we study first local optimality by evaluation of conjugate points:
$$ J(t) = \frac{\partial \Exp}{\partial (\lambda_0, t)} = \frac{\partial (x, y, z)}{\partial (\theta_0, h_3, t)}. $$
In the coordinates $p = \frac{h_3 t}{2}$, $\tau = \theta_0 + \frac{h_3 t}{2}$, we have:
\begin{align*}
&x = \frac{2}{h_3} \cos \tau \sin p, \\
&y = \frac{2}{h_3} \sin \tau \sin p, \\
&z = \frac{2p - \sin 2p}{h^2_3}.
\end{align*}
Thus 
\begin{align*}
 &J(p) = \frac{\partial (x, y, z)}{\partial (\tau, p, h_3)} = \frac{8\sin p}{h^5_3} \cdot \varphi(p),\\
 &\varphi(p) = (2p - \sin 2p) \cos p - (1 - \cos 2p) \sin p.
 \end{align*}
 The function $\varphi(p)$ does not vanish for $p \in (0, \pi)$, thus the first root of $J(p)$ is $p^1_{\conj} = \pi$. Summing up, the first conjugate time in the case $h_3 \ne 0$ is 
 $$ t^1_{\conj} = \frac{2\pi}{|h_3|}. $$
 The problem has an obvious symmetry group --- rotations around the $z$-axis. The corresponding Maxwell times are $t = \frac{2\pi n}{h_3}$, and Maxwell points in the image of the exponential mapping are $x = y = 0,$ $z = \frac{2 \pi n}{h^2_3}$. The first Maxwell time corresponding to the group of rotations is $t^1_{\max} = \frac{2 \pi}{|h_3|} = t^1_{\conj}$.
 Consider the restricted exponential mapping 
 \begin{align*}
 &\Exp \colon \widetilde{N} \to \widetilde{M}, \\
 &\widetilde{N} = \{ (\lambda, t) \in N  \mid  \theta \in S^1, \quad h_3 > 0, \quad t \in (0, \frac{2 \pi}{h_3}) \},\\
 &\widetilde{M} = \{ q \in M  \mid  z > 0, \quad x^2 + y^2 > 0 \}.
 \end{align*}
 The mapping $\Exp|_{ \widetilde{N}}$ is nondegenerate and proper $((\theta, h_3, t) \to \partial  \widetilde{N} \Rightarrow q \to \partial  \widetilde{M})$. The manifolds $\widetilde{N}$, $\widetilde{M}$ are connected, but $\widetilde{M}$ is not simply connected. Thus Hadamard theorem cannot be applied immediately. In order to pass to simply connected manifold, we factorize the exponential mapping by the group of rotations. We get
 \begin{align*} 
&\widehat{N} = \widetilde{N} / S^1 = \{ (h_3, t) \in \mathbb{R}^2  \mid  h_3 > 0, \quad t \in (0, \frac{2\pi}{h_3}) \},\\
&\widehat{M} = \widetilde{M} / S^1= \{ (r, z) \in \mathbb{R}^2  \mid  z > 0, \quad r = \sqrt{x^2 + y^2} > 0 \},\\
&\widehat{\Exp} \colon \widehat{N} \to \widehat{M}, \quad \widehat{\Exp} (h_3, t) = (z, r),\\
& z = \frac{2p - \sin 2p}{h^2_3}, \quad r = \frac{2}{h_3} \sin p, \quad p = \frac{h_3 t}{2}.
 \end{align*}
 By Hadamard theorem, the mapping $\widehat{\Exp} \colon \widehat{N} \to \widehat{M}$ is a diffeomorphism, thus  $\Exp \colon \widetilde{N} \to \widetilde{M}$  is a diffeomorphism as well. 
 
 So for any $q_1 \in \widetilde{M}$ there exists a unique $(\lambda_0, t) \in \widetilde{N}$ such that $q_1 = \Exp (\lambda_0, t_1)$. Thus the geodesic $q(t) = \Exp(\lambda_0, t)$, $t \in [0, t_1]$, is optimal. Summing up, if $z_1 \ne 0$, $x^2_1 + y^2_1 \ne 0$, then there exists a unique minimizer connecting $q_0$ with $q_1 = (x_1, y_1, z_1)$, it is determined by parameters $\theta_0 \in S^1,$ $h_3 \ne 0,$ $t_1 \in (0, \frac{2\pi}{|h_3|})$.
 
 If $z_1 = 0$, $x^2_1 + y^2_1 \ne 0$, then there exists a unique minimizer determined by parameters $\theta_0 \in S^1,$ $h_3 = 0,$ $t_1 > 0.$
 
 Finally, if $z_1 \ne 0$, $x^2_1 + y^2_1 = 0$, then 
  there exists a one-parameter family of minimizers determined by parameters $\theta_0 \in S^1,$ $h_3 \ne 0,$ $t_1 = \frac{2\pi}{|h_3|}$.

  Let us describe the SR distance $d_0 (q) = d(q_0, q)$.
  
  If $z = 0$, then $d_0 (q) = \sqrt{x^2 + y^2}$.
  
    If $z \ne 0$, $x^2 + y^2 = 0$, then $d_0 (q) = \sqrt{2\pi |z|}$.
    
    If $z \ne 0$, $x^2 + y^2 \ne 0$, then the distance is determined by the conditions
\begin{align*} 
\begin{cases}
d_0 (q) = \frac{p}{\sin p} \sqrt{x^2 + y^2},\\
\frac{2p - \sin 2p}{4 \sin^2 p} = \frac{z}{x^2 + y^2}.
\end{cases}
 \end{align*}
Exercise: Show that $d_0 \in C (\mathbb{R}^3)$.

\end{document}